\DeclareRobustCommand\widecheck[1]{{\mathpalette\@widecheck{#1}}}
\def\@widecheck#1#2{%
    \setbox\z@\hbox{\m@th$#1#2$}%
    \setbox\tw@\hbox{\m@th$#1%
       \widehat{%
          \vrule\@width\z@\@height\ht\z@
          \vrule\@height\z@\@width\wd\z@}$}%
    \dp\tw@-\ht\z@
    \@tempdima\ht\z@ \advance\@tempdima2\ht\tw@ \divide\@tempdima\thr@@
    \setbox\tw@\hbox{%
       \raise\@tempdima\hbox{\scalebox{1}[-1]{\lower\@tempdima\box
\tw@}}}%
    {\ooalign{\box\tw@ \cr \box\z@}}}
\newcommand{\mc}[1]{\mathcal{#1}}
\newcommand{\mf}[1]{\mathfrak{#1}}
\newcommand{\mb}[1]{\mathbb{#1}}
\newcommand{\id}{\mathbbm{1}}
\DeclareMathOperator{\Hom}{Hom}
\DeclareMathOperator{\End}{End}
\DeclareMathOperator{\ad}{ad}
\DeclareMathOperator{\Cend}{Cend}
\DeclareMathOperator{\RCend}{RCend}
\DeclareMathOperator{\Vect}{Vect}
\newcommand{\vac}{|0\rangle}
\def\ch{\mathrm{ch}}
\def\cl{\mathrm{cl}}
\DeclareMathOperator{\Aut}{Aut}
\def\thmhead@plain#1#2#3{%
  \thmname{#1}\thmnumber{\@ifnotempty{#1}{ }\@upn{#2}}%
  \thmnote{ {\the\thm@notefont#3}}}
\let\thmhead\thmhead@plain
\def\swappedhead#1#2#3{%
  \thmnumber{#2}%
  \thmname{\@ifnotempty{#2}{~}#1}%
  \thmnote{ {\the\thm@notefont#3}}}
\let\swappedhead@plain=\swappedhead
\def\th@definition{
  \thm@notefont{}%
  \normalfont
}
\theoremstyle{plain}
\newtheorem{theorem}{Theorem}[section]
\newtheorem{lemma}[theorem]{Lemma}
\newtheorem{proposition}[theorem]{Proposition}
\theoremstyle{definition}
\newtheorem{definition}[theorem]{Definition}
\newtheorem{example}[theorem]{Example}
\theoremstyle{remark}
\newtheorem{remark}[theorem]{Remark}
\numberwithin{equation}{section}
\definecolor{light}{gray}{.9}
\begin{document}

\title[Conformal operads and basic VA cohomology]{Conformal operads and the basic vertex algebra cohomology complex}

\author{Alberto De Sole}
\address{
Dipartimento di Matematica, Universit\`a di Roma La Sapienza, \& INFN,
P.le A. Moro 5, 00185 Roma, Italy}
\email{desole@mat.uniroma1.it}
\urladdr{www1.mat.uniroma1.it/\$$\sim$\$desole}

\author{Victor G. Kac}
\address{Department of Mathematics, MIT, Cambridge MA 02138}
\email{kac@math.mit.edu}
\urladdr{https://math.mit.edu/\$$\sim$\$kac/}

\author{Reimundo Heluani}
\address{IMPA, Rio De Janeiro, Brasil}
\email{heluani@potuz.net}
\urladdr{https://w3.impa.br/\$$\sim$\$heluani/}

%\author{Daniele Valeri}
%\address{School of Mathematics and Statistics, University of Glasgow, G12 8QQ Glasgow, UK}
%\email{daniele.valeriglasgow.ac.uk}

%\subjclass{
%Primary 17B63; 
%Secondary 17B69, 17B80, 37K30, 17B08
%}

%%%%%%%%

\begin{abstract}
We develop the notion of a (pro-) conformal pseudo operad and apply it to the construction
of the basic cohomology complex of a vertex algebra.
The paper heavily uses the ideas and constructions of the work of Tamarkin \cite{Tam02}.
\end{abstract}

\keywords{
Pro-conformal pseudo operad, basic cohomology of a vertex algebra, pseudo tensor category.
}

\maketitle

\tableofcontents

%%%%%%%%%%%%%%%%%%%%%%%%%%%%%%%%
\section{Introduction}\label{sec:1}

The developments of conformal field theory and the theory of integrable systems gave rise to the study of novel algebraic structures, such as vertex algebras, Lie conformal algebras and Poisson vertex algebras,
see \cite{BPZ84,Bor86,Kac98,FBZ04,DSK06} and many other papers and books.

A unified point of view on cohomologies of these algebraic structures is provided by the theory of \emph{linear unital symmetric superoperads} \cite{Mar96, MSS02,LV12}. 
Such an operad ${\mathscr P}$ is a sequence of vector superspaces ${\mathscr P}(n),\,n\in\mb Z_{\geq0}$, with right linear actions of the symmetric groups $S_n$, $n\geq1$,
endowed with parity preserving linear maps \eqref{eq:operad1} called composition maps, satisfying the associativity axiom \eqref{eq:operad2} and the equivariance condition \eqref{eq:operad4}. The operad ${\mathscr P}$ is called \emph{unital} if there exists an element $1\in{\mathscr P}(1)$ satisfying \eqref{eq:operad3}.

In the paper we will use an equivalent definition for unital operad ${\mathscr P}$ in terms of bilinear products
\begin{equation}\label{eq:intro1}
\circ_i:\,{\mathscr P}(n)\times{\mathscr P}(m)\to{\mathscr P}(n+m-1)\,,\,\,i=1,\dots,n
\,,
\end{equation}
defined by \eqref{eq:operad8}.
These products satisfy the associativity axioms \eqref{eq:circ-assoc1} and \eqref{eq:circ-assoc2},
and the equivariance axiom \eqref{eq:circ-equiv}.
A sequence of vector superspaces ${\mathscr P}(n),\,n\in\mb Z_{\geq0}$,
endowed with a right action of $S_n$ and parity preserving bilinear products \eqref{eq:intro1} 
satisfying the associativity axioms \eqref{eq:circ-assoc1} and \eqref{eq:circ-assoc2},
and the equivariance axiom \eqref{eq:circ-equiv}, is called a \emph{pseudo operad} \cite{Mar96}.

The operadic point of view on cohomology was adopted in a series of papers \cite{BDSHK19,BDSHK20,BDSHKV21,BDSK20,BDSK21}.
Actually all operads considered in these papers are unital, hence can be equivalently defined in terms of pseudo operads, as in \cite[Sec.3.1]{BDSHK19}.
The main objects of study of the present paper are \emph{conformal} pseudo operads, or, more generally \emph{pro-conformal pseudo operads}, which are not necessarily unital.
The latter can be used in order to introduce the \emph{basic cohomology complex} for Lie conformal superalgebras and vertex algebras.

Under some conditions these complexes form a short exact sequence, which leads to a long exact sequence in cohomology. Also, basic cohomology complex allow Lie derivatives and contractions, so that the Lie derivatives are expressed via the contractions and the differential by Cartan's formula. These facts allowed to compute the cohomology of the most important Lie conformal algebras \cite{BKV99,BDSK20},
and of Poisson vertex algebras \cite{DSK13,BDSK20}.

In Sections \ref{sec:3} and \ref{sec:4} of the present paper we develop a theory of pro-conformal pseudo operads and the corresponding basic cohomology complexes.

In the paper \cite{BDSK21} we constructed a spectral sequence relating cohomology of vertex algebras to the classical cohomology of Poisson vertex algebras. Since for freely generated Poisson vertex algebras the latter coincide with variational Poisson cohomology \cite{BDSHKV21}, and there are well-developed methods to compute the latter \cite{DSK13,BDSK20}, this enabled us to get information on cohomology of many important freely generated vertex algebras. Unfortunately this method gives information only on the ``bounded'' part of the cohomology, and we do not know how to prove that this part coincides with the whole $j$-th cohomology for $j\geq2$.

For that reason it is important to develop an alternative approach to cohomology of vertex algebras,
which is to introduce the basic cohomology complex, for which the notion of a pro-conformal pseudo operad is essential.

A \emph{conformal pseudo operad} is a collection of vector superspaces $\widetilde{{\mathscr P}}(n),\,n\in\mb Z_{\geq0}$, endowed with even endomorphisms $\partial:\,\widetilde{{\mathscr P}}(n)\to\widetilde{{\mathscr P}}(n)$,
and parity preserving bilinear products $\circ^i_{\lambda}:\,\widetilde{{\mathscr P}}(n)\times \widetilde{{\mathscr P}}(m)\to\widetilde{{\mathscr P}}(n+m-1)[\lambda]$, for $i=1,\dots,n$, satisfying the sesquilinearity condisions
\begin{equation}\label{eq:intro2}
(\partial f)\circ^i_{\lambda} g = (\partial+\lambda)(f\circ^i_\lambda g)
\,\,,\,\,\,\,
g\circ^i_{\lambda}(\partial f) = -\lambda\, f\circ^i_\lambda g
\,,
\end{equation}
and the associativity conditions similar to \eqref{eq:circ-assoc1}-\eqref{eq:circ-assoc2}:
\begin{align}
(f\circ^i_{\lambda} g)\circ^j_{\mu} h
& =
(-1)^{p(g)p(h)}
(f\circ^j_{\mu} h)\circ^{\ell+i-1}_{\lambda} g  \qquad \text{ if } 1\leq j<i 
\,,\label{eq:intro3} \\
(f\circ^i_{\lambda} g)\circ^j_{\mu} h
& =
f\circ^i_{\lambda+\mu}(g\circ^{j-i+1}_{\mu} h) \qquad \text{ if } i\leq j<i+m 
\,,\label{eq:intro4}
\end{align}
where $f\in\widetilde{{\mathscr P}}(n),\,g\in\widetilde{{\mathscr P}}(m),\,h\in\widetilde{{\mathscr P}}(\ell)$.
Furthermore, it is required that $\widetilde{{\mathscr P}}(n)$ is endowed with a right linear action of the symmetric group $S_n$ commuting with $\partial$ and satisfying the equivariance condition similar to \eqref{eq:circ-equiv}
\begin{equation}\label{eq:circ-equiv-intro}
(f^\sigma)\circ_\lambda^i(g^{\tau})
=
\big(f\circ_\lambda^{\sigma(i)} g\big)^{\sigma\circ_i\tau}
\,,
\end{equation}
where the $\circ_i$-product $\sigma\circ_i\tau\in S_{m+n-1}$ of permutations is defined by \eqref{eq:operad19b}.

Note the similarity between the definitions of pseudo operads and conformal pseudo operads
is analogous to the similarity between the definitions of Lie algebras and Lie conformal algebras.
In particular, for the conformal pseudo operad $\widetilde{{\mathscr P}}$, the vector superspaces
${\mathscr P}(n)=\widetilde{{\mathscr P}}(n)/\partial \widetilde{{\mathscr P}}(n)$ form a pseudo operad with $f\circ^i g
=\tilde f\circ^i_0\tilde g\,\,$ mod$\langle\partial\rangle$ and the obvious action of $S_n$.

An important property of a (pseudo) operad ${\mathscr P}$, used in our papers, beginning with \cite{BDSHK19}, is that the vector superspaces
\begin{equation}\label{eq:intro6}
W({\mathscr P}) 
=
\bigoplus_{n\geq-1}W_n({\mathscr P})
\,,\,\,
\text{ where }
W_n({\mathscr P})={\mathscr P}(n+1)^{S_{n+1}}
\,,
\end{equation}
has a canonical structure of a $\mb Z$-graded Lie superalgebra with the bracket defined by
($f\in W_n({\mathscr P}),\,g\in W_m({\mathscr P})$):
\begin{equation}\label{eq:intro7}
[f,g]=f\Box g-(-1)^{p(f)p(g)}g\Box f
\,,\,\,\text{ where }
f\Box g
=
\sum_{\sigma\in S_{m+1,n}}(f\circ_1 g)^{\sigma^{-1}}
\,,
\end{equation}
where $S_{m,n}\subset S_{m+n}$ denotes the subset of $(m,n)$-shuffles.
The earliest references to this construction that we know of are \cite{Ger63} for non-symmetric
operads and \cite{Tam02} for symmetric ones,
the latter being the symmetrization of the former.

Likewise, for a conformal pseudo operad $\widetilde{{\mathscr P}}$ the vector superspace
\begin{equation}\label{eq:intro8}
\widetilde{W}(\widetilde{{\mathscr P}}) 
=
\bigoplus_{n\geq-1}\widetilde{W}_n(\widetilde{{\mathscr P}})
\,,\,\,
\text{ where }
\widetilde{W}_n(\widetilde{{\mathscr P}})=\widetilde{{\mathscr P}}(n+1)^{S_{n+1}}
\,,
\end{equation}
has a canonical structure of a $\mb Z$-graded Lie conformal superalgebra with 
the $\lambda$-bracket
($f\in \widetilde{W}_n(\widetilde{{\mathscr P}}),\,g\in \widetilde{W}_m(\widetilde{{\mathscr P}})$):
\begin{equation}\label{eq:intro9}
[f_\lambda g]=f\Box_\lambda g-(-1)^{p(f)p(g)}g\Box_{-\lambda-\partial} f
\,,\,\,\text{ where }
f\Box_\lambda g
=
\sum_{\sigma\in S_{m+1,n}}(f\circ^1_{-\lambda-\partial} g)^{\sigma^{-1}}
\,.
\end{equation}
Furthermore, we have a natural representation, denoted by $\ad$, 
of the Lie superalgebra $W(\widetilde{{\mathscr P}}/\partial\widetilde{{\mathscr P}})$ on 
the Lie conformal superalgebra $\widetilde{W}(\widetilde{{\mathscr P}})$
by its derivations, given by

\begin{equation}\label{eq:intro10}
(\ad {\bar f})(g)= f\Box_0 g-(-1)^{p(f)p(g)}g\Box_{-\partial} f
\,,
\end{equation}
where $\bar f\in W_m(\widetilde{{\mathscr P}}/\partial\widetilde{{\mathscr P}})$,
$g\in\widetilde{W}_n(\widetilde{{\mathscr P}})$, and $f$ is a preimage of $\bar f$
under the map
$\widetilde{W}(\widetilde{{\mathscr P}})\twoheadrightarrow W(\widetilde{{\mathscr P}}/\partial\widetilde{{\mathscr P}})$,
see Theorem \ref{20170603:thm2-conf}.

Given a (pseudo) operad ${\mathscr P}$,
one constructs a cohomology complex 
\begin{equation}\label{eq:intro12}
C({\mathscr P})
=
\big(\bigoplus_{j\geq0}{\mathscr P}(j),\ad X\big)
\,,
\end{equation}
for each odd element $X\in W_1({\mathscr P})\,\big(={\mathscr P}(2)^{S_2}\big)$,
such that $[X,X]=0$.
Similarly, given a conformal pseudo operad $\widetilde{{\mathscr P}}$,
one constructs a \emph{basic} cohomology complex 
\begin{equation}\label{eq:intro13}
\widetilde{C}(\widetilde{{\mathscr P}})
=
\big(\bigoplus_{j\geq0}\widetilde{P}(j),\ad X\big)
\,,
\end{equation}
for each odd element $X\in W_1(\widetilde{{\mathscr P}}/\partial\widetilde{{\mathscr P}})$
such that $[X,X]=0$, using the representation \eqref{eq:intro10}
of $W(\widetilde{{\mathscr P}}/\partial\widetilde{{\mathscr P}})$ on $\widetilde{W}(\widetilde{{\mathscr P}})$.

In order to apply the operadic construction described above to the definition of cohomology of an algebraic structure, one needs to find a linear operad which ``governs'' this algebraic structure.

The simplest example is the Lie superalgebra cohomology.
Given a vector superspace $V$ over a field $\mb F$, one considers the well-known linear operad 
${\mathscr P}=\mc Hom\,V$ (also denoted by $\mc End\, V$), for which
\begin{equation}\label{eq:intro14}
(\mc Hom\,V)(n)
=
\Hom_{\mb F}(V^{\otimes n},V)
\,.
\end{equation}
The (right) action of $S_n$ on $(\mc Hom\,V)(n)$ is defined via its natural (left) action on $V^{\otimes n}$
(taking into account the parity of $V$), and the $i$-th product $f\circ_i g$
of $f\in(\mc Hom\,V)(n)$ and $g\in(\mc Hom\,V)(m)$ is defined by ($i=1,\dots,n$):
\begin{equation}\label{eq:intro15}
\begin{split}
& (f\circ_i g)(v_1\otimes\dots\otimes v_{n+m-1}) \\
& \qquad =
\pm
f(v_1\otimes\dots\otimes v_{i-1}\otimes g(v_i\otimes\dots\otimes v_{i+m-1})\otimes v_{i+m}\otimes\dots\otimes v_{n+m-1})
\,,
\end{split}
\end{equation}
where the $\pm$ sign is given by the usual Koszul rule.
We consider the Lie superalgebra (see \eqref{eq:intro6})
\begin{equation}\label{eq:intro16}
W(\Pi V)
=
W(\mc Hom\,\Pi V)
=
\bigoplus_{j\geq-1}W_j(\Pi V)
\,,
\end{equation}
where $\Pi$ denotes the reversal of parity,
corresponding to the operad $\mc Hom\,\Pi V$,
associated to the vector superspace $\Pi V$.
This operad ``governs'' Lie superalgebras in the sense that the Lie superalgebra structures on the vector superspace $V$ are in bijective correspondence with the odd elements $X\in W_1(\Pi V)$
such that $[X,X]=0$, via the formula \cite{NR67,DSK13}
\begin{equation}\label{eq:intro17}
[a,b]=(-1)^{p(a)} X(a\otimes b)
\,,\,\, a,b\in V
\,.
\end{equation}
Then the cohomology complex of this Lie superalgebra with coefficients in the adjoint representation is
\begin{equation}\label{eq:intro18}
C(\Pi V)=\big(\bigoplus_{j\geq0}C^j(\Pi V),\ad X\big)
\,\,,\,\,\,\,\text{ where }
C^j(\Pi V)=W_{j-1}(\Pi V)
\,.
\end{equation}
The cohomology of $V$ with coefficients in any $V$-module $M$ can be obtained by a simple reduction
of the complex $C(\Pi(V\oplus M))$, where $V\oplus M$ is a semidirect sum of the Lie superalgebra $V$
and the $V$-module $M$, viewed as an abelian ideal.

The next in complexity example is the Lie conformal superalgebra cohomology,
reviewed in Section \ref{sec:chom}, \cite{BKV99,DSK09}.
The corresponding ``governing'' operad is $\mc Chom\,V$,
where $V$ is a vector superspace with an even endomorphism $\partial$.
Introduce the vector superspaces 
\begin{equation}\label{eq:intro19}
V_n=V[\lambda_1,\dots,\lambda_n]/\langle\partial+\lambda_1+\dots+\lambda_n\rangle
\,,
\end{equation}
where the $\lambda_i$'s are commuting even indeterminates and $\langle\Phi\rangle$
stands for the image of the endomorphism $\Phi$.
Then $(\mc Chom\,V)(n)$ consists of all linear maps
$f_{\lambda_1,\dots,\lambda_n}:\,V^{\otimes n}\to V_n$,
satisfying the sesquilinearity property
\begin{equation}\label{eq:intro20}
f_{\lambda_1,\dots,\lambda_n}(v_1\otimes\dots\otimes\partial v_i\otimes\dots\otimes v_n)
=-\lambda_i
f_{\lambda_1,\dots,\lambda_n}(v_1\otimes\dots\otimes v_n)
\,.
\end{equation}
The right action of $S_n$ on $(\mc Chom\,V)(n)$
is given by the simultaneous permutation of the factors of $V^{\otimes n}$ and of the $\lambda_i$'s.
The composition of maps is defined by \eqref{20170613:eq2},
and consequently the products $f\circ_i g$ are defined using \eqref{eq:operad8}.

In the same way as above, we consider the Lie superalgebra
\begin{equation}\label{eq:intro21}
W^\partial(\Pi V)
=
W(\mc Chom\,\Pi V)
=
\bigoplus_{j\geq-1}W^\partial_j(\Pi V)
\,,
\end{equation}
corresponding to the operad $\mc Chom\,\Pi V$,
and use the fact that 
the Lie conformal superalgebra structures on $V$ with the endomorphism $\partial$
are in bijective correspondence with the odd elements $X\in W^\partial_1(\Pi V)$
such that $[X,X]=0$, via the formula \cite{DSK13}
\begin{equation}\label{eq:intro22}
[a_\lambda b]=(-1)^{p(a)} X_{\lambda,-\lambda-\partial}(a\otimes b)
\,,\,\, a,b\in V
\,.
\end{equation}

The Lie superalgebra $W^{\partial,\text{ass}}(\Pi V)$ constructed in \cite[Sec.5.1]{DSK13}
is a subalgebra of $W^{\partial}(\Pi V)$ consisting of maps satisfying
the Leibniz rules, provided that $V$ carries the structure of a differential algebra.
It gives rise to the variational Poisson vertex algebra cohomology,
however it doesn't correspond to an operad.

The corresponding to $\mc Chom\,V$ conformal operad $\widetilde{\mc Chom}\,V$
is constructed as follows. Let (cf. \eqref{eq:intro19})
\begin{equation}\label{eq:intro23}
\widetilde{V}_n=V[\lambda_1,\dots,\lambda_n]
\,,
\end{equation}
and let $(\widetilde{\mc Chom}\,V)(n)$ denote the space of all linear maps
\begin{equation}\label{eq:intro24}
f_{\lambda_1,\dots,\lambda_n}:\,V^{\otimes n}\to \widetilde{V}_n
\,,
\end{equation}
satisfying the same sesquilinearity property \eqref{eq:intro20} as before.
We define an $\mb F[\partial]$-module structure on $(\widetilde{\mc Chom}\,V)(n)$ by letting
\begin{equation}\label{eq:intro25}
(\partial f)_{\lambda_1,\dots,\lambda_n}
=
(\partial+\lambda_1+\dots+\lambda_n)f_{\lambda_1,\dots,\lambda_n}
\,.
\end{equation}
The right action of $S_n$ on $(\widetilde{\mc Chom}\,V)(n)$ 
is defined in the same way as for $\mc Chom\,V$.
Finally, the $\circ^i_\lambda$-products are defined as in \eqref{20170613:eq2-pro}
in Section \ref{sec:5.1}.

If the $\mb F[\partial]$-module $V$ is finitely generated,
the values of the $\circ^i_\lambda$-products are polynomials in $\lambda$,
and in this case $\widetilde{\mc Chom}\,V$ is a conformal pseudo operad.
Otherwise we choose a filtration of $V$ by finitely generated $\mb F[\partial]$-submodules,
obtaining a \emph{pro-conformal} pseudo operad, as in \cite{Tam02}, see Section \ref{sec:4}.
Consequently, in general 
$\widetilde{W}(\widetilde{\mc Chom}\,V)$ is a pro-Lie conformal superalgebra.
As a result, for each $X\in W_1(\mc Chom\,\Pi V)$ such that $[X,X]=0$,
which defines a Lie conformal superalgebra structure on $V$ by \eqref{eq:intro22},
we obtain a cohomology complex, called the \emph{basic cohomology complex}
of this Lie conformal superalgebra.

Finally, the operad ``governing'' vertex algebras is called the \emph{chiral operad}
and denoted ${\mathscr P}^{\ch}$.
In their seminal book \cite{BD04}, Beilinson and Drinfeld generalized the notion of a vertex algebra,
introduced by Borcherds \cite{Bor86}, by defining a chiral algebra in the language of $\mc D$-modules
on any smooth algebraic curve,
so that a vertex algebra is a weakly translation covariant chiral algebra on the affine line.
In \cite{BDSHK19} we translated this construction for the affine line
to the purely algebraic language of vertex algebras.
The resulting operad ${\mathscr P}^{\ch}$, not surprisingly, 
turns out to be an extension of the operad $\mc Chom$,
in the same spirit as $\mc Chom$ is an extension of the operad $\mc Hom$.

In order to explain the construction of ${\mathscr P}^{\ch}$ (see Section \ref{sec:chiral}),
introduce, for each $n\in\mb Z_{\geq0}$,
the algebra $\mc O^{\star T}_n$ by letting $\mc O^{\star T}_0=\mc O^{\star T}_1=\mb F$,
and $\mc O^{\star T}_n=\mb F[(z_i-z_j)^{\pm1}\,|\,1\leq i<j\leq n]$ for $n\geq2$.
Given a vector superspace $V$ with an even endomorphism $\partial$,
let $({\mathscr P}^{\ch}V)(n)$ be the space of all linear maps
\begin{equation}\label{eq:intro26}
f_{\lambda_1,\dots,\lambda_n}^{z_1,\dots,z_n}:\,V^{\otimes n}\otimes\mc O^{\star T}_n\to V_n
\,,
\end{equation}
satisfying the following two sesquilinearity conditions:
\begin{align}
& f_{\lambda_1,\dots,\lambda_n}^{z_1,\dots,z_n}((\lambda_i+\partial_i) v\otimes p)
=
f_{\lambda_1,\dots,\lambda_n}^{z_1,\dots,z_n}(v\otimes\frac{\partial p}{\partial z_i})
\,, \label{eq:intro27} \\
& f_{\lambda_1,\dots,\lambda_n}^{z_1,\dots,z_n}(v\otimes (z_i-z_j)p)
=
\Big(
\frac{\partial}{\partial\lambda_j}-\frac{\partial}{\partial\lambda_i}
\Big)
f_{\lambda_1,\dots,\lambda_n}^{z_1,\dots,z_n}(v\otimes p)
\,, \label{eq:intro28}
\end{align}
for $v=v_1\otimes\dots\otimes v_n\in V^{\otimes n}$, $p=p(z_1,\dots,z_n)\in\mc O^{\star,T}_n$,
where we let $\partial_iv=v_1\otimes\dots\otimes\partial v_i\otimes\dots\otimes v_n$.
The symmetric group $S_n$ acts on $({\mathscr P}^{\ch}V)(n)$
by permuting simultaneously the factors $v_1,\dots,v_n$ of $V^{\otimes n}$, the indeterminates 
$\lambda_1,\dots,\lambda_n$ and the variables $z_1,\dots,z_n$.
The composition of maps in ${\mathscr P}^{\ch}V$ is more complicated, 
and it is explained in Section \ref{sec:chiral}.

In the same way as above, we consider the Lie superalgebra
\begin{equation}\label{eq:intro28b}
W^{\ch}(\Pi V)
=
W({\mathscr P}^{\ch}\Pi V)
=
\bigoplus_{j\geq-1}W^{\ch}_j(\Pi V)
\,,
\end{equation}
corresponding to the operad ${\mathscr P}^{\ch}V$, and use the fact that (non-unital)
vertex algebra structures on $V$ with endomorphism $\partial$
are in bijective correspondence with odd elements $X\in W_1^{\ch}(\Pi V)$
such that $[X,X]=0$, via the formula (cf. \eqref{eq:intro22}) \cite{BDSHK19}
\begin{equation}\label{eq:intro30}
\int^\lambda d\sigma
[a_\sigma b]=(-1)^{p(a)} X_{\lambda,-\lambda-\partial}^{z_1,z_2}(a\otimes b\otimes\frac1{z_2-z_1})
\,,\,\, a,b\in V
\,.
\end{equation}
Here we use the integral of $\lambda$-bracket definition of a vertex algebra,
developped in \cite{DSK06}, see also Section \ref{sec:chiral} of the present paper.

The corresponding to ${\mathscr P}^{\ch}V$ pro-conformal pseudo operad $\widetilde{{\mathscr P}}^{\ch}$
is constructed along the lines similar to $\widetilde{\mc Chom}\,V$.
We replace in \eqref{eq:intro26} the vector superspace $V_n$ by $\widetilde{V}_n$
(given by \eqref{eq:intro23}),
with the $\mb F[\partial]$-module structure \eqref{eq:intro25},
define the action of $S_n$ on $(\widetilde{{\mathscr P}}^{\ch}V)(n)$
in the same way as for  $({{\mathscr P}}^{\ch}V)(n)$,
and define the products $\circ^i_\lambda$ by \eqref{20170613:eq2-pro2}.
As for $\widetilde{\mc Chom}\,V$,
these products take values in formal power series of $\lambda$, not in polynomials, 
hence $\widetilde{{\mathscr P}}^{\ch}V$ is a pro-conformal operad,
like $\widetilde{\mc Chom}\,V$.

As for $\widetilde{\mc Chom}\,V$, we have a representation of the Lie superalgebra
$W^\ch({\mathscr P}^\ch V)$ on the pro-Lie conformal superalgebra $\widetilde{W}^\ch(\widetilde{{\mathscr P}}^\ch V)$
by its derivations. As a result, for each odd $X\in W_1^\ch({\mathscr P}^\ch V)$ such that $[X,X]=0$,
which defines a (non-unital) vertex algebra structure on $V$,
we obtain a cohomology complex, called the \emph{basic cohomology complex} of this vertex algebra,
see Section \ref{rec:chiral-conf}.

Unfortunately the important property that the canonical maps 
$\widetilde{W}(\widetilde{\mc Chom}\,V)\to W(\mc Chom\,V)$
and $\widetilde{W}(\widetilde{{\mathscr P}}^{\ch}V)\to W({\mathscr P}^{\ch}V)$
are surjective holds only for a free $\mb F[\partial]$-module $V$,
see Propositions \ref{prop:basic-cohom} and \ref{prop:basic-ch-cohom} respectively.
In the case of Lie conformal superalgebras, this is not a problem, since in this case torsion is central.
This has lead to successful application of the method of basic cohomology to the computation of cohomology of Lie conformal superalgebras and PVA \cite{BDSK20}.
However for the unital vertex algebras the map 
$\varphi:\,\widetilde{W}(\widetilde{{\mathscr P}}^{\ch}V)\to W({\mathscr P}^{\ch}V)$
is not surjective, as Example \ref{ex:V=C} shows.
Hence the corresponding to this map cohomology long exact sequence gives information 
on the cohomology of the subcomplex Im\,$\varphi\subset W({\mathscr P}^\ch V)$.
We are planning to study this problem in a subsequent publication.

In Section \ref{sec:7} we generalize the construction of the previous section to the general context 
of pseudo tensor categories.

Throughout the paper the base field $\mb F$ has characteristics 0,
and all Homs and tensor products $\otimes$ are over $\mb F$ unless otherwise specified.

%%%%%%%%%%%%%%%%%%%%%%%%%%%%%%%%
\section{Linear (pseudo) operads and vertex algebra cohomology}\label{sec:2}

In the present Section we review the definition of a (linear, unital, symmetric, super) operad ${\mathscr P}$ 
and the construction of the Lie superalgebra $W({\mathscr P})$ associated to it, following \cite{BDSHK19}.
For extended reviews on the theory of operads, see e.g. \cite{LV12,MSS02,Yau16}.

%%%
\subsection{Definition of a linear operad}\label{sec:2.1}

Recall that a vector superspace is a $\mb Z/2\mb Z$-graded vector space $V=V_{\bar 0}\oplus V_{\bar 1}$.
We denote by $p(v)\in\mb Z/2\mb Z=\{\bar 0,\bar 1\}$ the parity of a homogeneous element $v\in V$.
We also let $\Pi V$ be the same vector space with the reversed parity $\bar p=1-p$.

Recall that 
a (linear, symmetric, super) \emph{operad} ${\mathscr P}$ is a collection 
of vector superspaces ${\mathscr P}(n)$, $n\geq0$, with parity $p$,
endowed with parity preserving linear maps, called \emph{composition maps},
associated to integers $n\geq1, m_1,\dots,m_n\geq0$, 
\begin{equation}\label{eq:operad1}
\begin{split}
{\mathscr P}(n) \otimes {\mathscr P}(m_1)\otimes\dots\otimes{\mathscr P}(m_n)\,\,&\to\,\,{\mathscr P}(M_n) \,, \\
f \otimes g_1 \otimes\dots\otimes g_n \,\, &\mapsto \,\, f(g_1\otimes\dots\otimes g_n) \,,
\end{split}
\end{equation}
where we denote $M_j:=\sum_{k=1}^jm_j$, $0\leq j\leq n$,
satisfying the following \emph{associativity axiom}:
\begin{equation}\label{eq:operad2}
f\big(
(g_1\otimes\dots\otimes g_n)
(h_1\otimes\dots\otimes h_{M_n})
\big)
=
\big(f
(g_1\!\otimes\dots\otimes\! g_n)
\big)
(h_1\otimes\dots\otimes h_{M_n})
\,\in{\mathscr P}\Big(\sum_{j=1}^{M_n}\ell_j\Big)
\,,
\end{equation}
for every $f\in{\mathscr P}(n)$, $g_i\in{\mathscr P}(m_i)$ for $i=1,\dots,n$,
and $h_{j}\in{\mathscr P}(\ell_{j})$ for $j=1,\dots,M_n$, where $\ell_j\geq0$.
In the left-hand side of \eqref{eq:operad2}
the argument of $f$ is the tensor product of the linear maps $g_1,\dots,g_n$
applied to the element
$H_1\otimes\dots\otimes H_n
=h_1\otimes\dots\otimes h_{M_n}$,
where $H_j=h_{M_{j-1}+1}\otimes\dots\otimes h_{M_j}$,
defined by the rule
\begin{equation}\label{20170804:eq1}
(g_1\otimes\dots\otimes g_n)(H_1\otimes\dots\otimes H_n)
=
(-1)^{\sum_{i<j}p(g_j)p(H_i)}g_1(H_1)\otimes\dots\otimes g_n(H_n)
\,.
\end{equation}
The signs are obtained by the usual Koszul-Quillen rule:
every time two odd elements are switched, we change the sign.

Furthermore, it is assumed that,
for each $n\geq1$, ${\mathscr P}(n)$ has a right action of the symmetric group $S_n$,
denoted $f^\sigma$, for $f\in{\mathscr P}(n)$ and $\sigma\in S_n$,
satisfying the following \emph{equivariance axiom}
($f\in{\mathscr P}(n)$, $g_1\in{\mathscr P}(m_1),\dots,g_n\in{\mathscr P}(m_n)$,
$\sigma\in S_n$, $\tau_1\in S_{m_1},\dots,\tau_n\in S_{m_n}$):
\begin{equation}\label{eq:operad4}
f^\sigma(g_1^{\tau_1}\otimes \dots\otimes g_n^{\tau_n})
=
\big(f(\sigma(g_1\otimes\dots\otimes g_n))\big)^{\sigma(\tau_1,\dots,\tau_n)}
\,,
\end{equation}
where the composition $\sigma(\tau_1,\dots,\tau_n)\in S_{m_1+\dots+m_n}$ 
of permutations is defined as follows.
First, recall that $\sigma\in S_n$ is can be described as a permutation map
on the tensor power $V^{\otimes n}$ of a vector superspace $V$ via
\begin{equation}\label{eq:operad6}
\sigma(v_1\otimes\dots\otimes v_n)
:=
\epsilon_v(\sigma) 
\,
v_{\sigma^{-1}(1)}\otimes\dots\otimes v_{\sigma^{-1}(n)}
\,,
\end{equation}
where 
\begin{equation}\label{eq:sign}
\epsilon_v(\sigma)=\prod_{i<j\,|\,\sigma(i)>\sigma(j)}(-1)^{p(v_i)p(v_j)}
\end{equation}
(again, we follow the Koszul-Quillen rule for the sign factor.)
More generally, $\sigma$ acts as a permutation on tensor product of vector superspaces:
\begin{equation}\label{eq:operad6b}
\sigma\colon
W_1\otimes \dots\otimes W_n
\stackrel{\sim}{\longrightarrow}
W_{\sigma^{-1}(1)}\otimes\dots\otimes W_{\sigma^{-1}(n)}\, ,
\end{equation}
by the same formula.
Then, the composition $\sigma(\tau_1,\dots,\tau_n)\in S_{M_n}$ is defined by acting on 
$v=v_1\otimes\dots\otimes v_{M_n}\in V^{\otimes(m_1+\dots+m_n)}$ 
by first applying each $\tau_i\in S_{m_i}$ to the vector 
$w_i=v_{M_{i-1}+1}\otimes\dots\otimes v_{M_i}\,\in V^{\otimes m_i}$,
and then by applying $\sigma\in S_n$ to $\tau_1(w_1)\otimes\dots\otimes \tau_n(w_n)$,
viewed as an element of $W_1\otimes\dots\otimes W_n$,
with $W_i=V^{\otimes m_i}$.
Summarizing this in a formula, we have:
\begin{equation}\label{eq:operad19}
(\sigma(\tau_1,\dots,\tau_n))(v)
=
\sigma\big(
\tau_1(v_{1}\otimes\dots\otimes v_{M_1})
\otimes\dots\otimes
\tau_n(v_{M_{n-1}+1}\otimes\dots\otimes v_{M_n})
\big)\,.
\end{equation}

An operad ${\mathscr P}$ is said to be \emph{unital}
if it is endowed with a \emph{unit} element $1\in{\mathscr P}(1)$
satisfying the following \emph{unity axiom}: 
\begin{equation}\label{eq:operad3}
f(1\otimes\dots\otimes 1)=1(f)=f
\,,\,\,\text{ for every }\,\,f\in{\mathscr P}(n)
\,.
\end{equation}

%%%
\subsection{Operads vs pseudo operads}\label{sec:2.1b}

Given a unital operad ${\mathscr P}$, one defines, for each $i=1,\dots,n$, 
the $\circ_i$-product
$\circ_i\colon{\mathscr P}(n)\times{\mathscr P}(m)\to{\mathscr P}(n+m-1)$
by insertion in position $i$, i.e.
\begin{equation}\label{eq:operad8}
f\circ_i g=f(
%1^{\otimes (i-1)}\otimes g\otimes 1^{\otimes(n-i)}
\overbrace{1\otimes\dots\otimes 1}^{i-1}
\otimes\stackrel{\vphantom{\Big(}i}{g}\otimes
\overbrace{1\otimes\dots\otimes1}^{n-i})\,.
\end{equation}
The associativity \eqref{eq:operad2} translates, in terms of the $\circ_i$-products, as:
\begin{align}
(f\circ_i g)\circ_j h
& =
(-1)^{p(g)p(h)}
(f\circ_j h)\circ_{\ell+i-1} g  \qquad \text{ if } 1\leq j<i 
\,,\label{eq:circ-assoc1} \\
(f\circ_i g)\circ_j h
& =
f\circ_i (g\circ_{j-i+1} h) \qquad \text{ if } i\leq j<i+m 
\,,\label{eq:circ-assoc2} \\
(f\circ_i g)\circ_j h
& =
(-1)^{p(g)p(h)}
(f\circ_{j-m+1} h)\circ_i g \qquad \text{ if } i+m\leq j<n+m
\,.\label{eq:circ-assoc3}
\end{align}
In fact, equation \eqref{eq:circ-assoc3} is obtained from \eqref{eq:circ-assoc1}
by exchanging $g$ with $h$ and by replacing $j-m+1$ by $i$ and $i$ by $j$. 
Furthermore, the equivariance axiom \eqref{eq:operad4-conf} of an operad ${{\mathscr P}}$
is translated to the following equivariance conditions for the $\circ_i$-products:
for $f\in{{\mathscr P}}(n)$, $g\in{{\mathscr P}}(m)$,
$\sigma\in S_n$, $\tau\in S_{m}$, we have
\begin{equation}\label{eq:circ-equiv}
(f^\sigma)\circ_i(g^{\tau})
=
\big(f\circ_{\sigma(i)} g\big)^{\sigma\circ_i\tau}
\,.
\end{equation}
Recall that the $\circ_i$-product $\sigma\circ_i\tau\in S_{m+n-1}$ of permutations is defined as follows \cite{LV12}, \cite[Sec.2]{BDSHK19}.
First, as described in Section \ref{sec:2.1}, a permutation $\sigma\in S_n$ is uniquely determined by its action on the 
tensor product of vector superspaces as in \eqref{eq:operad6b}.
Then, the $\circ_i$-product $\sigma\circ_i\tau\in S_{m+n-1}$ is defined by acting on 
$v=v_1\otimes\dots\otimes v_{m+n-1}\in V^{\otimes(m+n-1)}$ 
by first applying $\tau\in S_{m}$ to $w=v_{i}\otimes\dots\otimes v_{i+m-1}\,\in V^{\otimes m}\in W=V^{\otimes m}$,
and then by applying $\sigma\in S_n$ to $v_1\otimes\dots\stackrel{i}{\widecheck{\vphantom{\big(}\tau(w)}}\dots\otimes v_n$
viewed as an element of $V\otimes\dots\stackrel{i}{\widecheck{\vphantom{\big(}W}} \dots\otimes V$.
Summarizing this in a formula, we have:
\begin{equation}\label{eq:operad19b}
(\sigma\circ_i\tau)(v_1\otimes\dots\otimes v_{m+n-1})
=
\sigma\big(
v_{1}\otimes\dots\stackrel{i}{\widecheck{\vphantom{\big(}\tau(w)}}\dots\otimes v_{m+n-1}
\big)
\big)\,.
\end{equation}

Conversely, knowing all the $\circ_i$-products allows to reconstruct,
thanks to the associativity axiom \eqref{eq:operad2} ,
all the composition maps, by
\begin{equation}\label{eq:operad8a}
f(g_1,\dots,g_n)
=
(\cdots((f\circ_1 g_1)\circ_{m_1+1}g_2)\cdots)\circ_{m_1+\dots+m_{n-1}+1} g_n
\,.
\end{equation}
One can check that, if the $\circ_i$ products satisfy 
the associativity \eqref{eq:circ-assoc1}-\eqref{eq:circ-assoc2}, and the equivariance \eqref{eq:circ-equiv},
then the composition maps \eqref{eq:operad8a} satisfy the axioms of an operad,
i.e. the associativity \eqref{eq:operad2-conf} and the equivariance \eqref{eq:operad4-conf}, 
respectively \cite{Mar96}.

We then define a (symmetric, super) \emph{pseudo operad} \cite{Mar96} as
a collection of vector superspaces ${{\mathscr P}}(n)$, $n\geq0$, with parity $p$,
with parity preserving $\circ_i$-products, 
${{\mathscr P}}(n)\times{{\mathscr P}}(m)\to{{\mathscr P}}(n+m-1)$, for $i=1,\dots,n$,
satisfying 
the associativity conditions \eqref{eq:circ-assoc1}-\eqref{eq:circ-assoc2},
and endowed with a right action of the symmetric group $S_n$
satisfying the equivariance condition \eqref{eq:circ-equiv}.
By the above observations, a pseudo operad is automatically an operad 
with the composition maps \eqref{eq:operad8a},
and a unital operad is automatically a pseudo operad
with the $\circ_i$-products \eqref{eq:operad8}.

%%%
\subsection{$\mb Z_{\geq-1}$-graded Lie superalgebra associated to a pseudo operad and cohomology complexes}\label{sec:2.2}

Given a pseudo operad ${\mathscr P}$,
consider the $\mb Z_{\geq-1}$-graded vector superspace
$W({\mathscr P})=\bigoplus_{n\geq-1}W_n$,
where
\begin{equation}
W_n={\mathscr P}(n+1)^{S_{n+1}}
=\big\{f\in{\mathscr P}(n+1)\,\big|\,f^\sigma=f\,\forall \sigma\in S_{n+1}\big\}
\,.
\end{equation}

One defines a Lie superalgebra bracket on $W$ as follows. Recall that
a permutation $\sigma\in S_{m+n}$ is called an $(m,n)$-\emph{shuffle} if
\begin{equation}\label{eq:perm6}
\sigma(1)<\dots<\sigma(m)
\,,\,\,
\sigma(m+1)<\dots<\sigma(m+n)
\,.
\end{equation}
We denote by $S_{m,n}\subset S_{m+n}$ the subset of $(m,n)$-shuffles.
By definition, $S_{n,0}=S_{0,n}=\{1\}$ for every $n\geq0$
and, by convention, we let $S_{m,n}=\emptyset$ if either $m$ or $n$ is negative.

We then define the $\Box$-product of $f\in W_n$ and $g\in W_m$ by
\begin{equation}\label{eq:box}
f\Box g
=
\sum_{\sigma\in S_{m+1,n}}
(f\circ_1 g)^{\sigma^{-1}}
\,\in W_{m+n}\,.
\end{equation}
Note that $S_{m+1,-1}=\emptyset$, hence $f\Box g=0$ if $f\in W_{-1}$.
For example,
for $f,g\in W_1$, we have
\begin{equation}\label{eq:box1}
f\Box g = f\circ_1 g + (f\circ_1 g)^{(23)} + (f\circ_1 g)^{(132)}\,.
\end{equation}

\begin{theorem}{\cite[Thm.3.4]{BDSHK19}}\label{20170603:thm2}
\begin{enumerate}[(a)]
\item
For\/ $f\in W_n$ and\/ $g\in W_m$, we have\/ $f\Box g\in W_{n+m}$.
\item
The associator of the\/ $\Box$-product is right supersymmetric, i.e.
\begin{equation}\label{20170608:eq2}
(f\Box g)\Box h-f\Box(g\Box h)
=(-1)^{p(g)p(h)}
(f\Box h)\Box g-f\Box(h\Box g)
\,.
\end{equation}
\item
Consequently, $W$ is a\/ $\mb Z$-graded Lie superalgebra
with bracket given by\/ ($f\in W_n$, $g\in W_m$)
\begin{equation}\label{20170603:eq4}
[f,g]=f\Box g-(-1)^{p(f)p(g)}g\Box f
\,.
\end{equation}
\end{enumerate}
\end{theorem}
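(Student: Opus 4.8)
The plan is to handle the three parts in order, with (a) a coset-bookkeeping argument, (b) the computational core, and (c) a formal consequence of the right-symmetry established in (b). For part (a), I would first record that $f\circ_1 g\in{\mathscr P}(n+m+1)$ is invariant under the Young subgroup $H=S_{m+1}\times S_n\subset S_{m+n+1}$, where $S_{m+1}$ permutes the first $m+1$ arguments (those produced by the slots of $g$) and $S_n$ permutes the last $n$ arguments (the remaining slots of $f$). This invariance is exactly the equivariance axiom \eqref{eq:circ-equiv} applied to the $S_{m+1}$-invariance of $g$ and to the $S_n$-invariance of $f$ in its arguments $2,\dots,n+1$; there are no signs since $f$ and $g$ are genuinely symmetric. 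Since the $(m+1,n)$-shuffles $S_{m+1,n}$ form a complete set of representatives for the cosets $S_{m+n+1}/H$ (cf. \eqref{eq:perm6}), the elements $\sigma^{-1}$ represent the right cosets $H\backslash S_{m+n+1}$, and right multiplication by any $\rho\in S_{m+n+1}$ permutes these cosets. Writing $\sigma^{-1}\rho=h\,\tau^{-1}$ with $h\in H$ and $\tau\in S_{m+1,n}$ and using the $H$-invariance of $f\circ_1 g$ gives $(f\Box g)^\rho=f\Box g$, i.e. $f\Box g\in W_{n+m}$.

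For part (b) I expand the double shuffle sum
\[
(f\Box g)\Box h=\sum_{\tau,\sigma}\big((f\circ_1 g)^{\sigma^{-1}}\circ_1 h\big)^{\tau^{-1}},
\]
and use equivariance \eqref{eq:circ-equiv} to rewrite each inner term so that $h$ is inserted into the definite slot $\sigma^{-1}(1)$ of $f\circ_1 g$. I would then partition the terms according to whether $\sigma^{-1}(1)$ lies among the first $m+1$ positions (the $g$-block) or among the last $n$ positions (the $f$-block). By the associativity relation \eqref{eq:circ-assoc2}, the $g$-block terms reassemble into $f\circ_1(g\circ_j h)$ and, after collecting the shuffle sums, reproduce exactly $f\Box(g\Box h)$. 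Hence the associator $(f\Box g)\Box h-f\Box(g\Box h)$ consists solely of the $f$-block terms, a symmetrized sum of double insertions of $g$ and $h$ into distinct slots of $f$. Applying the associativity relation \eqref{eq:circ-assoc3} to interchange the two insertions produces the sign $(-1)^{p(g)p(h)}$ and exchanges the roles of $g$ and $h$, which is precisely the asserted right-supersymmetry \eqref{20170608:eq2}.

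For part (c), granting (a) and (b), the bracket \eqref{20170603:eq4} lands in $W_{n+m}$ by (a) so the grading is respected, and skew-supersymmetry is immediate from the definition. The super Jacobi identity then follows formally from the right-supersymmetry of the associator: expanding the graded Jacobiator $(-1)^{p(f)p(h)}[[f,g],h]+(\text{cyclic})$ in terms of $\Box$, every resulting term is an associator $a(x,y,z)=(x\Box y)\Box z-x\Box(y\Box z)$ of some permutation of $f,g,h$, and \eqref{20170608:eq2} forces these to cancel in pairs. I would carry this out by the standard right-pre-Lie manipulation, attending only to the Koszul signs.

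The main obstacle will be part (b): the bookkeeping of the nested shuffle sums together with the super signs. The delicate points are, first, verifying that after applying \eqref{eq:circ-assoc2} and regrouping, the surviving double sum over $\tau,\sigma$ really restricts to the shuffle set indexing $f\Box(g\Box h)$ — which ultimately rests on the compatibility of the $\circ_i$-product of permutations \eqref{eq:operad19b} with concatenation of shuffles — and, second, tracking the Koszul--Quillen signs so that exactly $(-1)^{p(g)p(h)}$ survives in the right-symmetrized remainder. Everything else is formal.
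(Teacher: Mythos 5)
Your parts (a) and (c) are correct and essentially coincide with the cited argument of \cite[Thm.3.4]{BDSHK19} (which this paper reproduces, in conformal form, in its proof of Theorem \ref{20170603:thm2-conf}): the coset bookkeeping behind (a) and the right-pre-Lie formalism behind (c) are exactly right. The opening of your part (b) is also the paper's route — for a shuffle $\sigma\in S_{m+1,n}$ the insertion slot $\sigma^{-1}(1)$ can only be $1$ or $m+2$, and the slot-$1$ terms cancel against $f\Box(g\Box h)$ via \eqref{eq:circ-assoc2} (a minor imprecision: only $j=1$ occurs, so ``$f\circ_1(g\circ_j h)$'' should read $f\circ_1(g\circ_1 h)$). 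The genuine gap is your final step: you never use the hypothesis $f\in W_n$, and \eqref{eq:circ-assoc3} alone does \emph{not} exchange the roles of $g$ and $h$. Concretely, \eqref{eq:circ-assoc3} turns a remainder term $\bigl((f\circ_1 g)\circ_{m+2}h\bigr)^{\rho}$ into $(-1)^{p(g)p(h)}\bigl((f\circ_{2}h)\circ_1 g\bigr)^{\rho}$, with $g$ in slot $1$ and $h$ in slot $2$ of $f$, whereas the $g\leftrightarrow h$-swapped associator consists of terms $(-1)^{p(g)p(h)}\bigl((f\circ_2 g)\circ_1 h\bigr)^{\rho'}$ with the slots reversed. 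Matching the two requires transposing the first two inputs of $f$, i.e.\ invoking $f=f^{(12)}$ through the equivariance axiom \eqref{eq:circ-equiv} (applied twice), and then re-indexing the triple-shuffle sum via the bijection $S_{m+1,\ell+1,n-1}\simeq S_{\ell+1,m+1,n-1}$ — precisely the portion of the paper's proof of Theorem \ref{20170603:thm2-conf} between \eqref{eq:assoc4} and \eqref{eq:assoc5} that your outline skips. Note that the external twists $(\cdot)^{\rho}$ permute the arguments of the composite map, not the slots of $f$, so your description of the remainder as a symmetrized sum of insertions ``into distinct slots of $f$'' already presupposes the invariance of $f$ that you never invoke.

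This missing ingredient is not cosmetic: identity \eqref{20170608:eq2} is false for non-invariant $f$, so no amount of sign bookkeeping can close the argument without it. Take ${\mathscr P}=\mc Hom(V)$ with $V$ purely even, $n=1$, $m=\ell=0$, so $g,h\in\End V=W_0$ are automatically invariant while $f\in{\mathscr P}(2)$ is arbitrary. A direct computation (using \eqref{eq:circ-assoc2}, \eqref{eq:circ-assoc3} and equivariance, exactly as above) gives that the associator $(f\Box g)\Box h-f\Box(g\Box h)$ is the map $v_1\otimes v_2\mapsto f(gv_1\otimes hv_2)+f(gv_2\otimes hv_1)$, which is symmetric under $g\leftrightarrow h$ for all $g,h$ if and only if $f^{(12)}=f$ (e.g.\ $f=e_1^*\otimes e_2^*\otimes u$ with $g,h$ the coordinate projections violates it). So the delicate points you flagged — the shuffle combinatorics of the cancellation and the Koszul signs — are real but routine; the step that actually needs an idea is the use of the $S_{n+1}$-invariance of $f$ in the remainder, and once you insert it your plan closes and coincides with the paper's proof.
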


Note that, if $X\in W_1$ is odd then the condition that $X\Box X=0$,
is equivalent, by the Jacobi identity for the Lie superalgebra $W({\mathscr P})$,
to $(\ad X)^2=0$.
\begin{definition}\label{def:univ-lie}
\begin{enumerate}[(a)]
\item
The $\mb Z_{\geq-1}$-\emph{graded Lie superalgebra} associated to the pseudo operad ${\mathscr P}$ 
is the $\mb Z_{\geq-1}$-graded superspace $W({\mathscr P})=\bigoplus_{n\geq-1}W_n$ 
with bracket defined by \eqref{20170603:eq4}.
\item
The \emph{cohomology complex} associated to ${\mathscr P}$
and an odd element $X\in W_1$ such that $X\Box X=0$
is $(W({\mathscr P}),\ad X)$.
\end{enumerate}
\end{definition}

%%%
\subsection{The unital operad $\mc{H}om(\Pi V)$ and Lie algebra cohomology}\label{sec:Hom}

Given a vector superspace $V=V_{\bar 0}\oplus V_{\bar 1}$,
we denote by $\Pi V$ the same space with reversed parity: $(\Pi V)_{\bar i}=V_{\bar 1-\bar i}$,
$\bar i\in\{\bar 0,\bar 1\}$.
We then consider the unital operad $\mc{H}om(\Pi V)$, defined as the collection of superspaces
$$
\mc{H}om(\Pi V)(n)=\Hom((\Pi V)^{\otimes n},\Pi V)\,\,,\qquad n\geq0\,.
$$
The composition maps are defined as the usual compositions:
for $(f:\,(\Pi V)^{\otimes n}\to \Pi V)\in\mc{H}om(\Pi V)(n)$ 
and $(g_i:\, (\Pi V)^{\otimes m_i}\to \Pi V)\in\mc{H}om(\Pi V)(m_i)$, 
 $i=1,\dots,n$, we let
\begin{equation}\label{eq:operad24}
(f(g_1\otimes \dots\otimes g_n))
(v_1\otimes\dots\otimes v_{M_n})
:=
f\big((g_1\otimes\dots\otimes g_n)(v_1\otimes\dots\otimes v_{M_n})\big)
\,,
\end{equation}
where the tensor product of maps 
$g_1\otimes\dots\otimes g_n:\,(\Pi V)^{\otimes m_1}\otimes\dots\otimes (\Pi V)^{\otimes m_n}
\to (\Pi V)^{\otimes n}$
was defined in \eqref{20170804:eq1}.
The unity element of the operad $\mc{H}om(\Pi V)$ is $1=\id_{\Pi V}\in\End(\Pi V)=\mc{H}om(\Pi V)(1)$.
The right action of $S_n$ on $\mc{H}om(\Pi V) (n)=\Hom((\Pi V)^{\otimes n},\Pi V)$
is given by
\begin{equation}\label{eq:operad7}
f^\sigma(v_1\otimes\dots\otimes v_n)
=
f(\sigma(v_1\otimes\dots\otimes v_n))
\,,
\end{equation}
with the action of $\sigma\in S_n$ on $(\Pi V)^{\otimes n}$ given by \eqref{eq:operad6}
(for the parity $\bar p=1-p$ of $\Pi V$).

Consider the Lie superalgebra $W(\Pi V):=W(\mc{H}om(\Pi V))$ defined in Section \ref{sec:2.2}.
\begin{proposition}[\cite{NR67,DSK13}]\label{20170612:prop1}
\begin{enumerate}[(a)]
\item
We have a bijective correspondence between 
the odd elements\/ $X\in W_1(\Pi V)$ such that\/ $X\Box X=0$
and the Lie superalgebra brackets\/
$[\cdot\,,\,\cdot]\colon V\times V\to V$ on\/ $V$,
given by 
\begin{equation}\label{20170612:eq3}
[a,b]=(-1)^{p(a)}X(a\otimes b)
\,.
\end{equation}
\item
Let $(V,[\cdot\,,\,\cdot])$ be a Lie superalgebra and let $X\in W_1(\Pi V)$ be the corresponding odd element
from (a).
The cohomology complex $(W(\Pi V),\ad X)$ from Definition \ref{def:univ-lie}(b)
coincides with the Chevalley-Eilemberg cohomology complex of the Lie superalgebra $V$
with coefficients in the adjoint representation.
\end{enumerate}
\end{proposition}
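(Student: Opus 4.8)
The plan is to reduce both statements to the classical décalage (parity-shift) dictionary between symmetric and skew-symmetric maps, and then to translate the operadic bracket and differential into the familiar Chevalley--Eilenberg formulas, tracking Koszul--Quillen signs throughout. I treat (a) and (b) in turn, the substantive points being the equivalence ``$X\Box X=0\Leftrightarrow$ Jacobi'' and the identity ``$\ad X=d_{CE}$''.

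For part (a) I would first unwind the spaces. By definition $W_1(\Pi V)=\mc{H}om(\Pi V)(2)^{S_2}=\Hom((\Pi V)^{\otimes2},\Pi V)^{S_2}$, so an odd $X$ is an odd, $S_2$-invariant map $(\Pi V)^{\otimes2}\to\Pi V$. Setting $[a,b]=(-1)^{p(a)}X(a\otimes b)$, a parity count shows $[\cdot\,,\,\cdot]$ is parity-preserving precisely when $X$ is odd: in $(\Pi V)^{\otimes2}$ the element $a\otimes b$ has parity $p(a)+p(b)$, an odd $X$ raises this by one in $\Pi V$, and the final shift back to $V$ returns parity $p(a)+p(b)$. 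Next I would compute the $S_2$-action: with the reversed parity $\bar p=1-p$, formulas \eqref{eq:operad6}--\eqref{eq:sign} give $(12)(a\otimes b)=(-1)^{(p(a)+1)(p(b)+1)}\,b\otimes a$, so invariance $X^{(12)}=X$ becomes $X(a\otimes b)=(-1)^{(p(a)+1)(p(b)+1)}X(b\otimes a)$; substituting the bracket formula this collapses exactly to super skew-symmetry $[a,b]=-(-1)^{p(a)p(b)}[b,a]$. Thus odd $S_2$-invariant $X$'s are in bijection with even skew-supersymmetric brackets on $V$.

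The core of (a) is that $X\Box X=0$ is equivalent to the super Jacobi identity. Using \eqref{eq:box1}, for $X\in W_1$ one has $X\Box X=X\circ_1 X+(X\circ_1 X)^{(23)}+(X\circ_1 X)^{(132)}\in W_2(\Pi V)$, and by \eqref{eq:intro15} the insertion in position $1$ carries no sign, so $(X\circ_1 X)(a\otimes b\otimes c)=X(X(a\otimes b)\otimes c)$. I would evaluate each of the three $S_3$-twisted terms on $a\otimes b\otimes c$ via \eqref{eq:operad6}, rewrite every $X(\cdot\otimes\cdot)$ back in terms of the bracket, and collect signs; the three summands produce precisely the three terms of $[a,[b,c]]=[[a,b],c]+(-1)^{p(a)p(b)}[b,[a,c]]$. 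Since $[X,X]=2\,X\Box X$ for odd $X$, the condition $X\Box X=0$ is the same as $[X,X]=0$, and it holds iff the bracket satisfies Jacobi. This completes (a).

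For part (b) I would apply the décalage isomorphism in each degree: $W_{j-1}(\Pi V)=\Hom((\Pi V)^{\otimes j},\Pi V)^{S_j}$, and an $S_j$-invariant map for the reversed parity is exactly the datum of a super skew-symmetric map $V^{\otimes j}\to V$, i.e. an element of $C^j(V;V)=\Hom(\Lambda^j V,V)$, the Chevalley--Eilenberg $j$-cochains valued in the adjoint module. This gives a graded isomorphism of cochain spaces (the complex \eqref{eq:intro18} with $C^j(\Pi V)=W_{j-1}(\Pi V)$). It then remains to match differentials: for $\phi\in W_{j-1}(\Pi V)$ I would expand $(\ad X)(\phi)=X\Box\phi-(-1)^{p(\phi)}\phi\Box X$ via \eqref{eq:box}, evaluate on $a_0\otimes\dots\otimes a_j$, and translate through the bracket formula and the décalage signs. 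The term $X\Box\phi$ reproduces the $\sum_i\pm[a_i,\phi(\dots\widehat{a_i}\dots)]$ part of $d_{CE}$, while $-(-1)^{p(\phi)}\phi\Box X$ reproduces the $\sum_{i<k}\pm\phi([a_i,a_k],\dots)$ part, with signs matching once the shift is accounted for.

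The main obstacle throughout is the sign bookkeeping. The reversed parity $\bar p$, the Koszul rule in the tensor product of maps \eqref{20170804:eq1} and in the permutation action \eqref{eq:operad6}--\eqref{eq:sign}, and the décalage signs all interact, and the identity $\ad X=d_{CE}$ holds only with the precise normalization $[a,b]=(-1)^{p(a)}X(a\otimes b)$. I expect no conceptual difficulty beyond checking that these signs conspire correctly; the cleanest route is to fix the décalage isomorphism once and push every operadic formula through it, rather than re-deriving signs term by term.
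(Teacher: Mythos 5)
Your proposal is correct and follows essentially the same route as the paper's source for this statement: the paper gives no proof itself but cites \cite{NR67,DSK13}, and those references argue exactly as you do, via the d\'ecalage dictionary (odd $S_2$-invariant $X$ on $\Pi V$ $\leftrightarrow$ even skew-supersymmetric bracket on $V$, with your sign computation $(-1)^{p(a)+p(b)+(p(a)+1)(p(b)+1)}=-(-1)^{p(a)p(b)}$ being the standard check), the Nijenhuis--Richardson identification of $X\Box X=0$ with the Jacobi identity using the three-term expansion \eqref{eq:box1}, and the matching of $\ad X=X\Box\phi-(-1)^{p(\phi)}\phi\Box X$ with the two families of terms in the Chevalley--Eilenberg differential. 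Your assignment of $X\Box\phi$ to the $\sum_i\pm[a_i,\phi(\dots\widehat{a_i}\dots)]$ terms and of $\phi\Box X$ to the $\sum_{i<k}\pm\phi([a_i,a_k],\dots)$ terms is the correct one, so nothing is missing.
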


In the following subsections, we will repeat the same line of reasoning 
for the cohomology theories of Lie conformal superalgebras
%Poisson algebras, Poisson vertex algebras 
and vertex algebras:
after reviewing their definition,
we will construct, for each of them, an operad ${\mathscr P}$,
and we will describe their algebraic structures as an element $X\in W_1\subset W({\mathscr P})$
such that $X\Box X=0$.
In this way, we automatically get, for each algebraic structure of interest,
the corresponding cohomology complex $(W({\mathscr P}),\ad X)$.

%%%
\subsection{The Chom unital operad and Lie conformal superalgebra cohomology}\label{sec:chom}

Let $V$ be a vector superspace with a given even endomorphism $\partial\in\End(V)$.
Recall that a \emph{Lie conformal superalgebra} structure on $V$ is a bilinear map
$[\cdot\,_\lambda\,\cdot]\colon V\times V\to V[\lambda]$, called $\lambda$-bracket, satisfying 
sesquilinearity ($a,b\in V$):
\begin{equation}\label{20170612:eq6}
[\partial a_\lambda b]=-\lambda[a_\lambda b]
\,,\,\,
[a_\lambda \partial b]=(\lambda+\partial)[a_\lambda b]
\,,
\end{equation}
skewsymmetry ($a,b\in V$):
\begin{equation}\label{20170612:eq4}
[a_\lambda b]=-(-1)^{p(a)p(b)}[b_{-\lambda-\partial}a]
\,,
\end{equation}
and the Jacobi identity ($a,b,c\in V$):
\begin{equation}\label{20170612:eq5}
[a_{\lambda}[b_\mu c]]-(-1)^{p(a)p(b)}[b_\mu [a_\lambda ,b]]
=[[a_\lambda b]_{\lambda+\mu}c]
\,.
\end{equation}

The unital operad $\mc{C}hom(\Pi V)$, ``governing'' the Lie conformal superalgebra structures on $V$, is defined as 
the collection of superspaces
$$
\mc{C}hom(\Pi V)(n)\subset\Hom\big((\Pi V)^{\otimes n}
,\Pi V[\lambda_1,\dots,\lambda_n]\big/\big\langle\partial+\lambda_1+\dots+\lambda_n\big\rangle\big)
\,\,,\,\,\,\,
n\geq0
\,,
$$
consisting of all linear maps
$$
f_{\lambda_1,\dots,\lambda_n}\colon (\Pi V)^{\otimes n}\to
\Pi V[\lambda_1,\dots,\lambda_n]\big/\big\langle\partial+\lambda_1+\dots+\lambda_n\big\rangle
\,,
$$
satisfying the sesquilinearity conditions:
\begin{equation}\label{20170613:eq1}
f_{\lambda_1,\dots,\lambda_n}(v_1\otimes\,\cdots\partial v_i\cdots\,\otimes v_n)
=-\lambda_if_{\lambda_1,\dots,\lambda_n}(v_1\otimes\,\cdots\,\otimes v_n)
\,\,\text{ for all } i=1,\dots,n
\,.
\end{equation}
Hereafter
$\lambda_1,\dots,\lambda_k$ are commuting indeterminates of even parity
and $\langle\Phi\rangle$ denotes the image of an endomorphim $\Phi$.
In particular, $\mc{C}hom(\Pi V)(0)=\Pi V/\partial\Pi V$
and $\mc{C}hom(\Pi V)(1)=\End_{\mb F[\partial]}(V)$.

The unity in the operad $\mc{C}hom(\Pi V)$ is $1=\id_V\in\mc{C}hom(\Pi V)(1)=\End_{\mb F[\partial]} V$,
and the right action of $\sigma\in S_n$ on $\mc{C}hom(\Pi V)(n)$
is given by (cf. \eqref{eq:operad6} and \eqref{eq:operad7}):
\begin{equation}\label{20170613:eq3}
\begin{array}{l}
\displaystyle{
\vphantom{\Big(}
(f^\sigma)_{\lambda_1,\dots,\lambda_n}(v_{\sigma(1)}\otimes\dots\otimes v_{\sigma(n)})
=
f_{\sigma(\lambda_1,\dots,\lambda_n)}
(\sigma(v_{\sigma(1)}\otimes\dots\otimes v_{\sigma(n)}))
} \\
\displaystyle{
\vphantom{\Big(}
\,\,\,\,\,\,\,\,\,\,\,\,\,\,\,\,\,\,
=
\epsilon_v(\sigma) 
f_{\lambda_{\sigma^{-1}(1)},\dots,\lambda_{\sigma^{-1}(n)}}
(v_1\otimes\dots\otimes v_n)
\,,}
\end{array}
\end{equation}
where $\epsilon_v(\sigma)$ is given by \eqref{eq:sign} for the parity $\bar p=1-p$ of $\Pi V$.
The composition of $f\in\mc{C}hom(\Pi V)(n)$
and $g_1\in\mc{C}hom(\Pi V)(m_1),\dots,g_n\in\mc{C}hom(\Pi V)(m_n)$ is defined by
\begin{equation}\label{20170613:eq2}
\begin{array}{l}
\displaystyle{
\vphantom{\Big(}
\big(f(g_1\otimes\dots\otimes g_n)\big)_{\lambda_1,\dots,\lambda_{M_n}}
(v_1\otimes\dots\otimes v_{M_n})
} \\
\displaystyle{
\vphantom{\Big(}
:=
f_{\Lambda_1,\dots,\Lambda_n}
\big(
((g_1)_{\lambda_{1},\dots,\lambda_{M_1}}
\otimes\dots\otimes
(g_n)_{\lambda_{M_{n-1}+1},\dots,\lambda_{M_n}})
(v_1\otimes\dots\otimes v_{M_n})
\big)
\,,}
\end{array}
\end{equation}
where we let
$\Lambda_i=\sum_{j=M_{i-1}+1}^{M_i}\lambda_j$, $i=1,\dots,n$,
and, recalling \eqref{20170804:eq1}, we have
\begin{equation}\label{20170821:eq5}
\begin{array}{l}
\displaystyle{
\vphantom{\Big(}
((g_1)_{\lambda_{1},\dots,\lambda_{M_1}}
\otimes\dots\otimes
(g_n)_{\lambda_{M_{n-1}+1},\dots,\lambda_{M_n}})
(v_1\otimes\dots\otimes v_{M_n})
} \\
\displaystyle{
\vphantom{\Big(}
=\pm\,
(g_1)_{\lambda_{1},\dots,\lambda_{M_1}}\!\!(v_1\otimes\dots\otimes v_{M_1})
\otimes\dots\otimes
(g_n)_{\lambda_{M_{\!n\!-\!1}\!+1},\dots,\lambda_{M_n}}\!\!(v_{M_{\!n\!-\!1}\!+1}\otimes\dots\otimes v_{M_n})
\,,}
\end{array}
\end{equation}
where the sign $\pm$ is given by
\begin{equation}\label{20170821:eq5b}
\pm=(-1)^{\sum_{i<j} \bar p(g_j)(\bar p(v_{M_{i-1}+1})+\dots+\bar p(v_{M_i}))}
\end{equation}

The proof of the unity, equivariance and associativity axioms for the unital operad 
$\mc{C}hom(\Pi V)$ can be found in \cite[Sec.5.2]{BDSHK19}.

Consider the $\mb Z_{\geq-1}$-graded Lie superalgebra 
$W^\partial(\Pi V):=W(\mc{C}hom(\Pi V))$ from Defintion \ref{def:univ-lie}.
\begin{proposition}{\cite[Prop.5.1]{BDSHK19}}\label{20170612:prop2}
\begin{enumerate}[(a)]
\item
We have a bijective correspondence between 
the odd elements\/ $X\in W^\partial_1(\Pi V)$ such that\/ $X\Box X=0$
and the Lie conformal superalgebra\/ $\lambda$-brackets
$[\cdot\,_\lambda\,\cdot]\colon V\times V\to V[\lambda]$ on\/ $V$,
given by 
\begin{equation}\label{20170612:eq3a}
[a_\lambda b]=(-1)^{p(a)}X_{\lambda,-\lambda-\partial}(a\otimes b)
\,.
\end{equation}
\item
Let $(V,[\cdot\,_\lambda\,\cdot])$ be a Lie conformal superalgebra and 
let $X\in W^\partial_1(\Pi V)$ be the corresponding odd element
from (a).
The cohomology complex $(W^\partial(\Pi V),\ad X)$ from Definition \ref{def:univ-lie}(b)
coincides with the cohomology complex of the Lie conformal superalgebra $V$
introduced in \cite{BKV99,DSK09} with coefficients in the adjoint representation.
\end{enumerate}
\end{proposition}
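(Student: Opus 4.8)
The plan is to follow the same line of reasoning as in the Lie superalgebra case of Proposition \ref{20170612:prop1}, the only genuinely new feature being the bookkeeping of the formal variables $\lambda_i$ and of the quotient by $\langle\partial+\lambda_1+\dots+\lambda_n\rangle$. The starting point is that an element $X\in W^\partial_1(\Pi V)=\mc{C}hom(\Pi V)(2)^{S_2}$ is the same datum as a bilinear map $V\times V\to V[\lambda]$: in the target $\Pi V[\lambda_1,\lambda_2]/\langle\partial+\lambda_1+\lambda_2\rangle$ every class has a unique representative obtained by substituting $\lambda_2=-\lambda-\partial$ (with $\lambda_1=\lambda$), so the assignment \eqref{20170612:eq3a} is a linear isomorphism between arbitrary elements of $\mc{C}hom(\Pi V)(2)$ and arbitrary bilinear maps $V\times V\to V[\lambda]$. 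The sign $(-1)^{p(a)}$ is forced by the parity reversal $\Pi$, exactly as in \eqref{20170612:eq3}.

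For part (a), I would translate the three conditions on an admissible $X$ into the three axioms of a Lie conformal superalgebra, one at a time. The sesquilinearity \eqref{20170613:eq1} of $X$ becomes the sesquilinearity \eqref{20170612:eq6} of the $\lambda$-bracket directly, since substituting $\lambda_2=-\lambda-\partial$ turns multiplication by $-\lambda_2$ into multiplication by $\lambda+\partial$. The $S_2$-invariance $X^{(12)}=X$, spelled out via the action \eqref{20170613:eq3} for the parity $\bar p=1-p$ of $\Pi V$, becomes the skewsymmetry \eqref{20170612:eq4}: one checks that the sign $(-1)^{\bar p(a)\bar p(b)}$ produced by \eqref{20170613:eq3}, combined with the two factors $(-1)^{p(a)}$ from \eqref{20170612:eq3a}, collapses to exactly $-(-1)^{p(a)p(b)}$. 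The substantive step is the equivalence between $X\Box X=0$ and the Jacobi identity \eqref{20170612:eq5}. Since $X$ is odd, $X\Box X=0$ is equivalent to $(\ad X)^2=0$ (as noted after Theorem \ref{20170603:thm2}) and lies in $W^\partial_2(\Pi V)=\mc{C}hom(\Pi V)(3)^{S_3}$; I would expand it by \eqref{eq:box1} as $X\circ_1 X$ plus its two $S_3$-shuffle translates. Computing $X\circ_1 X=X(X\otimes 1)$ via the insertion formula \eqref{eq:operad8} and the composition \eqref{20170613:eq2}, the outer pair of variables is $(\Lambda_1,\Lambda_2)=(\lambda_1+\lambda_2,\lambda_3)$; substituting the ternary representative $\lambda_3=-\lambda_1-\lambda_2-\partial$, and observing that the inner bracket retains the variable $\lambda_1$ while the outer slot carries $\Lambda_1=\lambda_1+\lambda_2$, identifies $X\circ_1 X$ up to sign with $[[a_{\lambda_1}b]_{\lambda_1+\lambda_2}c]$. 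The three shuffle terms then reproduce precisely the three terms of \eqref{20170612:eq5}.

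For part (b), I would first match the cochain spaces: a degree-$n$ cochain of the Lie conformal superalgebra complex of \cite{BKV99,DSK09} is a sesquilinear, skewsymmetric map $V^{\otimes n}\to V[\lambda_1,\dots,\lambda_n]/\langle\partial+\lambda_1+\dots+\lambda_n\rangle$, which is exactly an $S_n$-invariant element of $\mc{C}hom(\Pi V)(n)$ after the parity shift $\Pi$, i.e. an element of $W^\partial_{n-1}(\Pi V)$. Under this identification it remains to check that the operadic differential $\ad X=X\Box(\cdot)-(-1)^{p(X)p(\cdot)}(\cdot)\Box X$ coincides with the classical differential $d$: expanding $X\Box f$ and $f\Box X$ by \eqref{eq:box} produces, respectively, the ``action'' terms $\sum_i\pm[{a_i}_{\lambda_i}f(\cdots)]$ and the ``internal'' terms $\sum_{i<j}\pm f(\cdots[{a_i}_{\lambda_i}a_j]\cdots)$, while the shuffle sums account for the antisymmetrization; comparing with the explicit formula for $d$ in \cite{BKV99,DSK09} finishes the proof.

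The main obstacle will be the bookkeeping of signs and of the $\lambda$-variable substitutions in the two computations $X\Box X\leftrightarrow$ Jacobi and $\ad X\leftrightarrow d$. The delicate points are the interaction of the Koszul--Quillen signs of \eqref{20170613:eq3} (taken for the reversed parity $\bar p$) with the sign $(-1)^{p(a)}$ of \eqref{20170612:eq3a}, and the correct passage between the operadic variables $(\lambda_1,\dots,\lambda_n)$, the sums $\Lambda_i$ arising from \eqref{20170613:eq2}, and the representatives modulo $\langle\partial+\sum_i\lambda_i\rangle$; once these are controlled, everything else is a formal transcription of the Lie superalgebra argument of Proposition \ref{20170612:prop1}.
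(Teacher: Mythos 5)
Your proposal is correct and reconstructs essentially the argument of the cited source: the paper itself gives no proof, deferring to \cite[Prop.5.1]{BDSHK19}, and that proof proceeds exactly as you outline --- identifying $W^\partial_1(\Pi V)$ with $\lambda$-brackets via the substitution $\lambda_2=-\lambda-\partial$, translating sesquilinearity \eqref{20170613:eq1} and $S_2$-invariance (with the $\bar p$-Koszul sign collapsing to $-(-1)^{p(a)p(b)}$) into \eqref{20170612:eq6} and \eqref{20170612:eq4}, expanding $X\Box X$ by \eqref{eq:box1} into the three terms of the Jacobi identity, and for (b) matching $W^\partial_{n-1}(\Pi V)$ with the skewsymmetric sesquilinear cochains of \cite{BKV99,DSK09} and $\ad X$ with the classical differential. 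Your only loose phrase is calling the maps in the degree-$2$ identification ``arbitrary'' bilinear maps $V\times V\to V[\lambda]$ (they carry the sesquilinearity constraints on both sides), but you immediately account for this in the next step, so there is no gap.
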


%%%
\subsection{The chiral operad and vertex algebra cohomology}\label{sec:chiral}

As in the previous subsection, $V$ is a vector superspace over $\mb F$ 
with a given even endomorphism $\partial\in\End(V)$.

Following \cite{DSK06}, we review the ``fifth definition'' of a (non unital) \emph{vertex algebra} 
structure on $V$.
It is given by an {\itshape integral} $\lambda$-{\itshape bracket},
namely a linear map $V \otimes V \to \mb F [\lambda] \otimes V$, 
denoted by 
$$
u\otimes v\,\mapsto\,\int^\lambda d\sigma[u_\sigma v]
=
{:}uv{:}+\int_0^\lambda\!d\sigma\,[u_\sigma v]
\,,
$$
such that the following axioms hold ($u,v,w\in V$):
\begin{enumerate}[(i)]
\item
$\int^\lambda\!\! d\sigma\, [\partial u_\sigma v] = -\int^\lambda\!\! d\sigma\,\sigma[u_\sigma v]$,
$\int^\lambda\!\! d\sigma\,[u_\sigma \partial v] = \int^\lambda\!\!d\sigma\, (\partial+\sigma)[u_\sigma v]$,
\item 
$\int^\lambda\!\! d\sigma\,[v_\sigma u] =(-1)^{p(u)p(v)}\int^{-\lambda-\partial}\!\! d\sigma\,[u_\sigma v]$,
\item
$\int^\lambda\!\! d\sigma\!\! \int^\mu\!\! d\tau \Big(
[u_\sigma[v_\tau w]] - (-1)^{p(u)p(v)} [v_\tau[u_\sigma w]] - [[u_\sigma v]_{\sigma+\tau}w] 
\Big) = 0$.
\end{enumerate}
The axioms (i), (ii), (iii) are called the sesquilinearity, skewsymmetry and Jacobi identity axioms, respectively. 
In a \emph{unital} vertex algebra one also requires the existence of an element 
$\vac\in V_{\bar{0}}$ such that $\partial\vac=0$ and 
$\int^\lambda d\sigma[\vac_\sigma v]=\int^\lambda d\sigma[v_\sigma \vac]=v$.

The $\lambda$-\emph{bracket} $[u_\lambda v]$ is then
defined as the derivative by $\lambda$ of the polynomial $\int^\lambda\!d\sigma\,[u_\sigma v]$,
while the \emph{normally ordered product} ${:}uv{:}$ 
is defined as the constant term of $\int^\lambda\!d\sigma\,[u_\sigma v]$.
Axioms (i)-(iii) are a concise way to write more complicated relations
involving the normally ordered products ${:}uv{:}$ and the $\lambda$-brackets $[u_\lambda v]$.
We refer to \cite{DSK06} for a detailed discussion.

A left \emph{module} $M$ over a non-unital vertex algebra $V$ is a vector superspace 
with an even endomorphism $\partial^M\in\End(M)$, endowed with a
parity preserving \emph{integral} $\lambda$-\emph{action}
$V\otimes M\to\mb F[\lambda]\otimes M$, denoted by
$$
v\otimes m\,\mapsto\,\int^\lambda d\sigma(v_\sigma m)\,,
$$
such that the following axioms hold
($u,v\in V,\,m\in M$):
\begin{enumerate}[(i)]
\item
$\int^\lambda\!\! d\sigma\, (\partial v_\sigma v) = -\int^\lambda\!\! d\sigma\,\sigma(v_\sigma m)$,
$\int^\lambda\!\! d\sigma\,(v_\sigma \partial^Mm) = \int^\lambda\!\!d\sigma\, (\partial^M+\sigma)(v_\sigma m)$,
\item
$\int^\lambda\!\! d\sigma\!\! \int^\mu\!\! d\tau \Big(
(u_\sigma(v_\tau m)) - (-1)^{p(u)p(v)} (v_\tau(u_\sigma m)) - ([u_\sigma v]_{\sigma+\tau}m)
\Big) = 0$.
\end{enumerate}

Let $n$ be a non-negative integer.
We denote by $\mc O_{n}^{\star T}$ be the algebra of 
translation invariant polynomials in the even variables $z_1,\dots,z_n$ 
localized on the diagonals $z_i=z_j$ for $i\neq j$,
i.e., $\mc O_{0}^{\star T}=\mc O_{1}^{\star T}=\mb F$ and
\begin{equation}\label{eq:Ostarn}
\mc O_{n}^{\star T}
=\mb F[(z_i-z_i)^{\pm 1}]_{1\leq i<j\leq n}
\,\,,\qquad n\geq2.
\end{equation}
The unital operad ${\mathscr P}^{\ch}(\Pi V)$, governing (non-unital) vertex algebra structures on $V$,
is defined as the collection of superspaces
$$
\mc{P}^{\ch}(\Pi V)(n)\subset\Hom\big((\Pi V)^{\otimes n}\otimes\mc O_{n}^{\star T}
,\Pi V[\lambda_1,\dots,\lambda_n]\big/\big\langle\partial+\lambda_1+\dots+\lambda_n\big\rangle\big)
\,\,,\,\,\,\,
n\geq0
\,,
$$
consisting of all linear maps
$$
f_{\lambda_1,\dots,\lambda_n}\colon (\Pi V)^{\otimes n}\otimes\mc O_{n}^{\star T}\to
\Pi V[\lambda_1,\dots,\lambda_n]\big/\big\langle\partial+\lambda_1+\dots+\lambda_n\big\rangle
\,,
$$
satisfying the following two \emph{sesquilinearity} conditions:
\begin{equation}\label{20160629:eq4}
\begin{array}{l}
\displaystyle{
\vphantom{\Big(}
f_{\lambda_1,\dots,\lambda_n}(\partial_iv\otimes p)
+\lambda_if_{\lambda_1,\dots,\lambda_n}(v\otimes p)
=f_{\lambda_1,\dots,\lambda_n}\Bigl(v\otimes \frac{\partial p}{\partial z_i}\Bigr)
\,,} \\
\displaystyle{
\vphantom{\Big(}
f_{\lambda_1,\dots,\lambda_n}(v\otimes (z_i-z_j)p)
=\Bigl(\frac{\partial}{\partial\lambda_j}-\frac{\partial}{\partial\lambda_i}\Bigr)
f_{\lambda_1,\dots,\lambda_n}(v\otimes p)
\,,}
\end{array}
\end{equation}
for $v=v_1\otimes\dots\otimes v_n\in(\Pi V)^{\otimes n}$, $p=p(z_1,\dots,z_n)\in\mc O_{n}^{\star T}$,
where we let $\partial_iv=v_1\otimes\dots\partial v_i\dots\otimes v_n$.
In particular,
${\mathscr P}^\ch(\Pi V)(0) = \Pi V/\partial \Pi V$
and
${\mathscr P}^\ch(\Pi V)(1) = \End_{\mb F[\partial]} (V)$.
Sometimes, in order to specify the variables of the function $p\in\mc O_{n}^{\star T}$,
we will denote the image of the map $f$ as
\begin{equation}\label{20160709:eq1}
f^{z_1,\dots,z_n}_{\lambda_1,\dots,\lambda_1}(v_1\otimes\dots\otimes v_n\otimes p(z_1,\dots,z_n))
\,.
\end{equation}

The unity of the operad ${\mathscr P}^{\ch}(\Pi V)$ is the identity endomorphism 
$1=\id_V$, viewed as an element of ${\mathscr P}^\ch(\Pi V)(1)= \End_{\mb F[\partial]} (V)$.
The symmetric group $S_{n}$ has a right action on ${\mathscr P}^\ch(\Pi V)(n)$ by permuting simultaneously the vectors $v_1,\dots,v_n\in\Pi V$, the indeterminates $\lambda_1,\dots,\lambda_n$, 
and the corresponding variables $z_1,\dots,z_n$ in $p\in\mc O_n^{\star T}$. 
Explicitly, for $f\in{\mathscr P}^\ch(\Pi V)(n)$ and $\sigma \in S_{n}$, we let
\begin{equation}\label{20160629:eq5}
\begin{split}
& (f^\sigma)^{z_1,\dots,z_n}_{\lambda_1,\dots,\lambda_n}(v_1\otimes\dots\otimes v_n\otimes p(z_1,\dots,z_n))
\\
&=
\epsilon_v(\sigma)
f^{z_1,\dots,z_n}_{\lambda_{\sigma^{-1}(1)},\dots,\lambda_{\sigma^{-1}(n)}}
\big(v_{\sigma^{-1}(1)}\otimes\dots\otimes v_{\sigma^{-1}(n)};p(z_{\sigma^{-1}(1)},\dots,z_{\sigma^{-1}(n)})\big)
\,,
\end{split}
\end{equation}
where $\epsilon_v(\sigma)$ is given by \eqref{eq:sign} for the parity $\bar p=1-p$ of $\Pi V$.

To define the structure of an operad on ${\mathscr P}^{\ch}(\Pi V)$ 
we are left to specify how elements in ${\mathscr P}^\ch(\Pi V)$ are composed. 
Fix $n\geq1$ and $m_1,\dots,m_n\geq0$,.
Let $f_{\lambda_1,\dots,\lambda_n}\in{\mathscr P}^\ch(\Pi V)(n)$ 
and $(g_i)_{\lambda_1,\dots,\lambda_{m_i}}\in{\mathscr P}^\ch(\Pi V)(m_i)$, $i=1,\dots,n$.
In order to define their composition
$f(g_1\otimes\dots\otimes g_n)_{\lambda_1,\dots,\lambda_{M_n}}$,
where, as before, $M_n=\sum_{i=1}^nm_i$,
let $v=v_1\otimes\dots\otimes v_{M_n}\in(\Pi V)^{\otimes M_n}$ 
and $p(z_1,\dots,z_{M_n})\in\mc O_{M_n}^{\star T}$.
Let
\begin{equation}\label{circ3}
w_i=v_{M_{i-1}+1}\otimes\dots\otimes v_{M_i}\,\in V^{\otimes m_i}, \qquad i=1,\dots,n.
\end{equation}
We can decompose
\begin{equation}\label{circ4}
p(z_1,\dots,z_{M_n}) = h(z_1,\dots,z_{M_n}) \prod_{i=1}^n q_i(z_{M_{i-1}+1},\dots, z_{M_i}), 
\end{equation}
for some $q_i \in \mc O_{m_i}^{\star T}$ and $h\in\mc O_{M}^{\star T}$ which has no poles at
$z_k=z_l$ for any $k,l$ such that $M_{i-1}+1 \le k<l \le M_i$ ($1\le i\le n$).
Then, by definition,
\begin{equation}\label{circ5}
\begin{split}
&\bigl(f(g_1\otimes\dots\otimes g_n)\bigr)_{\lambda_1,\dots,\lambda_{M_n}}^{z_1,\dots,z_{M_n}} 
\bigl(v_1\otimes\dots\otimes v_{M_n}; p(z_1,\dots,z_{M_n})\bigr) \\
&= \pm f_{\Lambda_1,\dots,\Lambda_n}^{Z_1,\dots,Z_n} 
\Bigl(
(g_1)_{\lambda_1-\partial_{z_1},\dots,\lambda_{m_1}-\partial_{z_{m_1}}}^{z_1,\dots,z_{m_1}} 
(v_1\otimes\dots\otimes v_{m_1}\otimes q_1(z_1,\dots,z_{m_1}) )_{\to} \otimes \dots \\
& \dots \otimes
(g_n)_{\lambda_{M_{n\!-\!1}\!+\!1}\!-\partial_{z_{M_{n\!-\!1}\!+\!1}},\dots,\lambda_{M_n}\!-\partial_{z_{M_n}}}^{z_{M_{n\!-\!1}\!+\!1},\dots,z_{M_n}} 
(v_{M_{n\!-\!1}\!+\!1}\otimes\dots \ldots\otimes v_{M_n}\otimes q_n(z_{M_{n\!-\!1}\!+\!1},\dots \\
& \qquad \dots ,z_{M_n}))_{\to}
\otimes h(z_1,\dots,z_{M_n}) \big|_{z_{M_{i-1}+1}=\dots=z_{M_i}=Z_i \,\forall 1\le i\le n)} \Bigr),
\end{split}
\end{equation}
where $\Lambda_i=\sum_{j=M_{i-1}+1}^{M_i}\lambda_j$ and the sign $\pm$ is 
given by \eqref{20170821:eq5b}.
In the right-hand side of \eqref{circ5}, the arrows means that 
we first take the partial derivatives of $h$ and then we make the substitutions 
$z_{M_{i-1}+1}=\dots=z_{M_i}=Z_i$, $i=1,\dots,n$.

It is proved in \cite[Prop.6.7]{BDSHK19} that
the collection of vector superspaces ${\mathscr P}^\ch(n)$, $n\ge 0$, 
with the unity $\id_V$, the action of $S_n$ defined by \eqref{20160629:eq5} 
and the composition maps defined by \eqref{circ5}, is a unital operad.

Next, consider the $\mb Z_{\geq-1}$-graded Lie superalgebra 
$W^\ch(\Pi V):=W({\mathscr P}^{\ch})(\Pi V))$ from Defintion \ref{def:univ-lie}(a).
\begin{proposition}{\cite[Thm.6.12]{BDSHK19}}\label{20170612:prop2-ch}
We have a bijective correspondence between 
the odd elements\/ $X\in W^\ch_1(\Pi V)$ such that\/ $X\Box X=0$
and the structure of (non-unital) vertex algebras on $V$.
In particular, the integral $\lambda$-bracket associated to $X$ is given by
($u,v\in V$)
\begin{equation}\label{20160719:eq3}
\int^{\lambda}\!d\sigma[u_\sigma v]
=
(-1)^{p(u)}X^{z_1,z_2}_{\lambda,-\lambda-\partial}\bigl(u\otimes v\otimes \frac1{z_2-z_1}\bigr)
\,.
\end{equation}
\end{proposition}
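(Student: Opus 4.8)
The plan is to mimic, at the chiral level, the strategy used for Lie conformal superalgebras in Proposition~\ref{20170612:prop2}, exploiting the fact that the two sesquilinearity conditions \eqref{20160629:eq4} rigidify an element of ${\mathscr P}^\ch(\Pi V)(2)$ so strongly that it is determined by a single bilinear datum. Concretely, I would first observe that, for $X\in{\mathscr P}^\ch(\Pi V)(2)$, the second condition in \eqref{20160629:eq4} produces the values of $X$ on all regular powers $(z_2-z_1)^k$, $k\geq0$, by repeated application of $\partial_{\lambda_1}-\partial_{\lambda_2}$ (note $(z_2-z_1)\cdot\frac1{z_2-z_1}=1$), while the first condition in \eqref{20160629:eq4}, via $\frac{\partial}{\partial z_1}\frac1{z_2-z_1}=(z_2-z_1)^{-2}$, expresses the values on all more singular powers $(z_2-z_1)^{-k}$ in terms of the value on $\frac1{z_2-z_1}$ (evaluated on $\partial$-shifted arguments, which are part of the same bilinear map). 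Since $\{(z_2-z_1)^k\}_{k\in\mb Z}$ is a basis of $\mc O_2^{\star T}$, this shows that $X$ is uniquely recovered from the bilinear map $u\otimes v\mapsto X_{\lambda_1,\lambda_2}(u\otimes v\otimes\frac1{z_2-z_1})$, which by \eqref{20160719:eq3} is, after the substitution $\lambda_1=\lambda,\lambda_2=-\lambda-\partial$ forced by the target $\Pi V[\lambda_1,\lambda_2]/\langle\partial+\lambda_1+\lambda_2\rangle$, exactly (a sign times) the integral $\lambda$-bracket. Since the integral $\lambda$-bracket encodes both the normally ordered product (its constant term in $\lambda$) and the $\lambda$-bracket (its $\lambda$-derivative), this already sets up the bijection on the level of underlying data.

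Next I would match the axioms one at a time. The two sesquilinearity conditions \eqref{20160629:eq4}, read through \eqref{20160719:eq3} and the reconstruction above, should be equivalent to the vertex-algebra sesquilinearity axiom (i): applying the first condition with $p=\frac1{z_2-z_1}$ converts $\partial$ acting on an argument of $X$ into the derivative $\frac{\partial}{\partial z_i}\frac1{z_2-z_1}$, and re-expressing the resulting value on $(z_2-z_1)^{-2}$ through the value on $\frac1{z_2-z_1}$ produces precisely the $\sigma$-multiplication and $(\partial+\sigma)$-terms of (i). The $S_2$-invariance $X^{(12)}=X$, evaluated via \eqref{20160629:eq5} on $\frac1{z_2-z_1}=-\frac1{z_1-z_2}$, yields the skewsymmetry axiom (ii): the sign $\epsilon_v(\sigma)$ for the reversed parity $\bar p=1-p$ accounts exactly for the factor $(-1)^{p(u)p(v)}$, and the swap $\lambda_1\leftrightarrow\lambda_2$ forces the replacement $\lambda\mapsto-\lambda-\partial$ on the right-hand side of (ii).

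The heart of the proof, and the step I expect to be the main obstacle, is to show that $X\Box X=0$ is equivalent to the Jacobi axiom (iii). Using \eqref{eq:box1} one has $X\Box X=X\circ_1 X+(X\circ_1 X)^{(23)}+(X\circ_1 X)^{(132)}\in{\mathscr P}^\ch(\Pi V)(3)$, and the plan is to evaluate this on $u\otimes v\otimes w$ together with the rational function $\frac{1}{(z_2-z_1)(z_3-z_1)}\in\mc O_3^{\star T}$ (or a suitable symmetrization thereof). Unwinding the chiral composition \eqref{circ5}---in particular the factorization \eqref{circ4} of the three-point function and the substitutions $z_i=Z_j$ performed after differentiating the outer factor---each of the three shuffle terms should collapse, after the substitution $(\lambda_1,\lambda_2,\lambda_3)=(\sigma,\tau,-\sigma-\tau-\partial)$, to one of the three iterated expressions $[u_\sigma[v_\tau w]]$, $[v_\tau[u_\sigma w]]$, $[[u_\sigma v]_{\sigma+\tau}w]$ of (iii), with the signs from \eqref{eq:box1} and from the Koszul rule \eqref{20170821:eq5b} matching those in (iii). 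The delicate bookkeeping here is to track the poles generated by the composition and to confirm that the double integral $\int^\lambda d\sigma\int^\mu d\tau$ of (iii) is faithfully encoded by the two simple factors $\frac1{z_2-z_1}$ and $\frac1{z_3-z_1}$; this is precisely where the residue structure of $\mc O_3^{\star T}$ enters, and where the computation is most likely to require care.

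Finally, for bijectivity I would run the construction in reverse: given an integral $\lambda$-bracket obeying (i)--(iii), I define $X$ on $\frac1{z_2-z_1}$ by \eqref{20160719:eq3} and extend it to all of $\mc O_2^{\star T}$ by the forced sesquilinearity relations described above, then verify that the two conditions in \eqref{20160629:eq4} are mutually consistent---so that the extension is well defined, using axiom (i)---that \eqref{20160629:eq5} holds (from (ii)), and that $X\Box X=0$ (from (iii), by the computation just sketched). Since each axiom has been shown equivalent to the corresponding operadic condition, and the maps in the two directions are mutually inverse by construction, this establishes the claimed bijection, with \eqref{20160719:eq3} recording the correspondence explicitly.
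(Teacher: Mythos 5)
The paper itself does not prove this proposition --- it is quoted verbatim from \cite[Thm.6.12]{BDSHK19} --- so your sketch can only be measured against that proof, and in outline it follows the same strategy: an element $X\in{\mathscr P}^\ch(\Pi V)(2)$ is indeed pinned down by its value on $\frac1{z_2-z_1}$, since $\{(z_2-z_1)^k\}_{k\in\mb Z}$ is a basis of $\mc O_2^{\star T}$ and the two conditions \eqref{20160629:eq4} propagate the value on $\frac1{z_2-z_1}$ to all nonnegative powers (via $\partial_{\lambda_2}-\partial_{\lambda_1}$) and to all deeper poles (via $\partial$-shifted arguments); your identification of the first sesquilinearity condition with axiom (i), and of $X^{(12)}=X$ with axiom (ii) --- including the sign bookkeeping, where $\epsilon_v(\sigma)$ for the parity $\bar p=1-p$ combined with $\frac1{z_1-z_2}=-\frac1{z_2-z_1}$ and the prefactor $(-1)^{p(u)}$ in \eqref{20160719:eq3} produces exactly $(-1)^{p(u)p(v)}$ --- checks out. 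One small correction to the Jacobi step: unwinding \eqref{eq:box1} via \eqref{circ5}, the three shuffle terms produce expressions of outer-bracket type $[[u\,v]\,w]$, $[[u\,w]\,v]$, $[[v\,w]\,u]$ (the inner $X$ always sits in the first slot of the outer one), and matching these against the three terms of axiom (iii) requires an intermediate use of skewsymmetry (ii); they do not each ``collapse'' directly to $[u_\sigma[v_\tau w]]$ etc.

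The genuine gap is in the direction $(\mathrm{iii})\Rightarrow X\Box X=0$, which in your last paragraph is dispatched with ``by the computation just sketched.'' Evaluating $X\Box X$ on $u\otimes v\otimes w\otimes\frac{1}{(z_2-z_1)(z_3-z_1)}$ proves only that (iii) is \emph{necessary}; to conclude that the element $X\Box X\in{\mathscr P}^\ch(\Pi V)(3)$ vanishes identically you must show that this single evaluation, together with its $S_3$-translates, determines the whole element. The needed spanning lemma is: since $X\Box X$ lies in $\widetilde{W}_2$-type invariant position (it is $S_3$-invariant by Theorem \ref{20170603:thm2}(a)) and satisfies both conditions \eqref{20160629:eq4}, its values on the $S_3$-orbit of $\frac{1}{(z_2-z_1)(z_3-z_1)}$ propagate --- via multiplication by $(z_i-z_j)$ (i.e.\ $\lambda$-derivatives) and via $\partial/\partial z_i$ (i.e.\ argument shifts) --- to every Laurent monomial whose poles lie on two diagonals sharing a common index; and the partial-fraction identity $\frac{1}{(z_1-z_2)(z_2-z_3)}=\bigl(\frac{1}{z_1-z_2}+\frac{1}{z_2-z_3}\bigr)\frac{1}{z_1-z_3}$, applied inductively, shows such monomials span all of $\mc O_3^{\star T}$. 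Your parenthetical ``or a suitable symmetrization thereof'' gestures at this, but the spanning argument is the load-bearing step and must be supplied, exactly as the analogous decomposition statements (cf.\ \eqref{circ4}) carry the proof in \cite{BDSHK19}. The same kind of argument, at the level of $\mc O_2^{\star T}$, is also what makes your reconstruction of $X$ from the integral $\lambda$-bracket well defined: consistency of the two extension rules against the relation $(z_2-z_1)\cdot(z_2-z_1)^{-k-1}=(z_2-z_1)^{-k}$ is precisely where axiom (i) is consumed --- you flag this, correctly, but it likewise needs to be carried out rather than asserted.
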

\begin{definition}
Let $(V,\int^\lambda\!d\sigma[\cdot\,_\sigma\,\cdot])$ be a (non-unital) vertex algebra and 
let $X\in W^\ch_1(\Pi V)$ be the corresponding odd element from Proposition \ref{20170612:prop2-ch}.
The \emph{vertex algebra cohomology complex} of $V$ 
(with coefficients in $V$)
is the cohomology complex $(W^\ch(\Pi V),\ad X)$ from Definition \ref{def:univ-lie}(b).
\end{definition}

\begin{remark}\label{rem:27}
We have constructed cohomology complexes for Lie superalgebras, Lie conformal superalgebras
and vertex algebras, by considering respectively the operads 
$\mc Hom(\Pi V), \mc{C}hom (\Pi V)$ and ${\mathscr P}^{\ch}(\Pi V)$
associated to a given vector superspace $V$.
For this purpose we constructed the associated $\mb Z_{\geq-1}$-graded Lie superalgebras
$W(\Pi V), W^\partial(\Pi V)$ and $W^\ch(\Pi V)$,
such that there is a bijective correspondence between odd elements $X$ satisfying $[X,X]=0$
in $W_1(\Pi V), W_1^\partial(\Pi V)$ and $W_1^\ch(\Pi V)$
and the Lie superalgebra, Lie conformal superalgbera and vertex algebra structures on $V$, respectively.
As a result we constructed the corresponding  cohomology complexes with coefficients in the adjoint module.
This construction can be easily generalized to the case of an arbitrary $V$-module $M$,
by considering the extention of the Lie superalgebra, Lie conformal superalgebra and vertex algebra 
structure of $V$ to $V\oplus M$, by making $M$ to be an abelian ideal.
Then the natural reduction of the corresponding complexes produces
the cohomology complex of $V$ with coefficients in $M$.
See e.g. \cite{DSK13}.
\end{remark}

\begin{remark}\label{rem:classical}
An important role is also played by the classical operad ${\mathscr P}^{\cl}(\Pi V)$,
governing Poisson vertex algebra structures on $V$ and the corresponding classical PVA cohomology.
Such operad can be obtained as the associated graded of a filtered chiral operad.
The space ${\mathscr P}^{\cl}(n)$ consists of maps
$$
f_{\lambda_1,\dots,\lambda_n}\colon \mc G(n)\times (\Pi V)^{\otimes n}\to
\Pi V[\lambda_1,\dots,\lambda_n]\big/\big\langle\partial+\lambda_1+\dots+\lambda_n\big\rangle
\,,
$$
satisfying certain conditions,
where $\mc G(n)$ is the set of oriented graphs with $n$ vertices \cite{BDSHK19}.
Evaluating on the graph without edges produces the corresponding variational Poisson cohomology.
This restriction induces an isomorphism in cohomology,
provided that $V$, as differential algebra, is an algebra of differential polynomials \cite{BDSHKV21}.
\end{remark}

%%%%%%%%%%%%%%%%%%%%%%%%%%%%%%%%
\section{Conformal operads and conformal pseudo operads}\label{sec:3}

%%%
\subsection{Definition of conformal operad and pseudo operad}

\begin{definition}\label{def:conf-operad}
A \emph{conformal} (symmetric, super) \emph{operad} $\widetilde{{\mathscr P}}$ 
is a collection of vector superspaces $\widetilde{{\mathscr P}}(n)$, $n\geq0$, with parity $p$,
endowed with an even endomorphism $\partial:\,\widetilde{{\mathscr P}}(n)\to\widetilde{{\mathscr P}}(n)$.
The collection $\widetilde{{\mathscr P}}$ is endowed
with parity preserving linear maps, called \emph{composition maps},
associated to integers $n\geq1, m_1,\dots,m_n\geq0$,
\begin{equation}\label{eq:conf-comp}
\begin{split}
\widetilde{{\mathscr P}}(n) \otimes \widetilde{{\mathscr P}}(m_1)\otimes\dots\otimes\widetilde{{\mathscr P}}(m_n)\,\,&
\to\,\,\widetilde{{\mathscr P}}(M_n)[\lambda_1,\dots,\lambda_n] \,, \\
f \otimes g_1 \otimes\dots\otimes g_n \,\, &\mapsto \,\, 
f_{\lambda_1,\dots,\lambda_n}(g_1\otimes\dots\otimes g_n) \,,
\end{split}
\end{equation}
satisfying the \emph{sesquilinearity conditions} ($f\in\widetilde{{\mathscr P}}(n)$, $g_i\in\widetilde{{\mathscr P}}(m_i)$ for $i=1,\dots,n$)
\begin{equation}\label{20170613:eq1-conf}
\begin{split}
& (\partial f)_{\lambda_1,\dots,\lambda_n}(g_1\otimes\,\cdots\,\otimes g_n)
=
(\partial+\lambda_1+\dots+\lambda_n)f_{\lambda_1,\dots,\lambda_n}(g_1\otimes\,\cdots\,\otimes g_n)
\,,\\
& f_{\lambda_1,\dots,\lambda_n}(g_1\otimes\,\cdots\partial g_i\cdots\,\otimes g_n)
=-\lambda_if_{\lambda_1,\dots,\lambda_n}(g_1\otimes\,\cdots\,\otimes g_n)
\,\,\text{ for } i=1,\dots,n
\,,
\end{split}
\end{equation}
and the \emph{associativity axiom} ($f\in\widetilde{{\mathscr P}}(n)$, $g_i\in\widetilde{{\mathscr P}}(m_i)$ for $i=1,\dots,n$,
and $h_{j}\in\widetilde{{\mathscr P}}(\ell_{j})$ for $j=1,\dots,M_n$)
\begin{equation}\label{eq:operad2-conf}
\begin{split}
& \big(f_{\lambda_1,\dots,\lambda_n}
(g_1\otimes\dots\otimes g_n)
\big)_{\mu_1,\dots,\mu_{M_n}}
(h_1\otimes\dots\otimes h_{M_n}) \\
& =
\pm
f_{\lambda_1+\Gamma_1,\dots,\lambda_n+\Gamma_n}\big(
(g_1)_{\mu_1,\dots,\mu_{m_1}}(H_1)
\otimes\dots\otimes 
(g_n)_{\mu_{M_{n-1}+1},\dots,\mu_{M_n}}(H_n)
\big)
\,,
\end{split}
\end{equation}
where
\begin{equation}\label{eq:Mj-Gammaj}
M_j:=\sum_{k=1}^jm_k
\,\,,\,\,\,\,
\Gamma_i=\sum_{j=M_{i-1}+1}^{M_i}\mu_j
\,\,,\,\,\,\,
H_i=h_{M_{i-1}+1}\otimes\dots\otimes h_{M_i}
\,,\qquad
i=1,\dots,n
\,,
\end{equation}
and where, as in \eqref{20170804:eq1}, the sign is $\pm=(-1)^{\sum_{i<j}p(g_j)p(H_i)}$.
Equation \eqref{eq:operad2-conf} is an identity in 
$\widetilde{{\mathscr P}}\Big(\sum_{j=1}^{M_n}\ell_j\Big)
[\lambda_1,\dots,\lambda_n,\mu_1,\dots\mu_{M_n}]$.
Furthermore, it is assumed that,
for each $n\geq1$, $\widetilde{{\mathscr P}}(n)$ has a right action of the symmetric group $S_n$,
which commutes with $\partial$,
denoted $f^\sigma$, for $f\in\widetilde{{\mathscr P}}(n)$ and $\sigma\in S_n$,
satisfying the following \emph{equivariance axiom}
($f\in\widetilde{{\mathscr P}}(n)$, $g_1\in\widetilde{{\mathscr P}}(m_1),\dots,g_n\in\widetilde{{\mathscr P}}(m_n)$,
$\sigma\in S_n$, $\tau_1\in S_{m_1},\dots,\tau_n\in S_{m_n}$):
\begin{equation}\label{eq:operad4-conf}
(f^\sigma)_{\lambda_1,\dots,\lambda_n}(g_1^{\tau_1}\otimes \dots\otimes g_n^{\tau_n})
=
\big(f_{\lambda_{\sigma^{-1}(1)},\dots,\lambda_{\sigma^{-1}(n)}}(\sigma(g_1\otimes\dots\otimes g_n))\big)^{\sigma(\tau_1,\dots,\tau_n)}
\,,
\end{equation}
where the composition $\sigma(\tau_1,\dots,\tau_n)\in S_{M_n}$ 
was defined by \eqref{eq:operad19}.
\end{definition}

\begin{definition}
A conformal operad $\widetilde{{\mathscr P}}$ is called \emph{unital}
if there is a \emph{unit} element $1\in\widetilde{{\mathscr P}}(1)/\partial\widetilde{{\mathscr P}}(1)$
such that, for any representative $\widetilde{1}\in\widetilde{{\mathscr P}}(1)$, we have
\begin{equation}\label{eq:1-conf}
\widetilde{1}_{-\partial}(f)=f
\,,\,\,
f_{0,\dots,0}(\widetilde{1}\otimes\dots\otimes\widetilde{1})=f
\,\,,\qquad f\in\widetilde{{\mathscr P}}(n)
\,.
\end{equation}
\end{definition}
Note that, by the sesquilinearity conditions \eqref{20170613:eq1-conf},
neither of the above expressions depend on the choice of the representative $\widetilde{1}\in\widetilde{{\mathscr P}}(1)$.

\begin{definition}\label{def:conf-pseudo-operad}
A \emph{conformal pseudo operad}
is a collection of vector superspaces $\widetilde{{\mathscr P}}(n)$, $n\geq0$, with parity $p$,
endowed with an even endomorphism $\partial:\,\widetilde{{\mathscr P}}(n)\to\widetilde{{\mathscr P}}(n)$,
with parity preserving $\circ^i_\lambda$-products, 
$\widetilde{{\mathscr P}}(n)\times\widetilde{{\mathscr P}}(m)\to\widetilde{{\mathscr P}}(n+m-1)[\lambda]$, for $i=1,\dots,n$,
satisfying the sesquilinearity conditions
\begin{equation}\label{eq:circ-sesq}
(\partial f)\circ^i_\lambda g
=
(\lambda+\partial)(f\circ^i_\lambda g)
\,\,,\qquad
f\circ^i_\lambda(\partial g)
=
-\lambda f\circ^i_\lambda g
\,,
\end{equation}
and the associativity conditions ($f\in\widetilde{{\mathscr P}}(n), g\in\widetilde{{\mathscr P}}(m), h\in\widetilde{{\mathscr P}}(\ell)$)
\begin{align}
(f\circ^i_{\lambda} g)\circ^j_{\mu} h
& =
(-1)^{p(g)p(h)}
(f\circ^j_{\mu} h)\circ^{\ell+i-1}_{\lambda} g  \qquad \text{ if } 1\leq j<i 
\,,\label{eq:circ-assoc1-conf} \\
(f\circ^i_{\lambda} g)\circ^j_{\mu} h
& =
f\circ^i_{\lambda+\mu}(g\circ^{j-i+1}_{\mu} h) \qquad \text{ if } i\leq j<i+m 
\,.\label{eq:circ-assoc2-conf}
\end{align}
By exchanging $\lambda$ with $\mu$ and $g$ with $h$,
and by replacing $j-m+1$ by $i$ and $i$ by $j$,
condition \eqref{eq:circ-assoc1-conf} can be equivalently written as
\begin{equation}\label{eq:circ-assoc3-conf}
(f\circ^i_{\lambda} g)\circ^j_{\mu} h
=
(-1)^{p(g)p(h)}
(f\circ^{j-m+1}_{\mu} h)\circ^i_{\lambda} g \qquad \text{ if } i+m\leq j<n+m
\,.
\end{equation}
Furthermore, we require that each $\widetilde{{\mathscr P}}(n)$ 
is endowed with a right action of the symmetric group $S_n$
commuting with $\partial$ and
satisfying the following equivariance condition
($f\in\widetilde{{\mathscr P}}(n)$, $g\in\widetilde{{\mathscr P}}(m)$, $\sigma\in S_n$, $\tau\in S_{m}$):
\begin{equation}\label{eq:circ-equiv-conf}
f^\sigma\circ^i_{\lambda}g^{\tau}
=
\big(f\circ^{\sigma(i)}_{\lambda}g\big)^{\sigma\circ_i\tau}
\,,
\end{equation}
where the $\circ_i$-product $\sigma\circ_i\tau\in S_{m+n-1}$ of permutations was defined by \eqref{eq:operad19b}.
\end{definition}

\begin{proposition}\label{prop:conf-pseudo}
\begin{enumerate}[(a)]
\item
A unital conformal operad $\widetilde{{\mathscr P}}$ is automatically a conformal pseudo operad
with the $\circ^i_{\lambda}$-products 
$\circ^i_\lambda\colon\widetilde{{\mathscr P}}(n)\times\widetilde{{\mathscr P}}(m)\to\widetilde{{\mathscr P}}(n+m-1)[\lambda]$
defined, for each $i=1,\dots,n$, by
\begin{equation}\label{eq:operad8-conf}
f\circ^i_\lambda g
=
f_{0,\dots\stackrel{i}{\widecheck{\lambda}}\dots,0}(\widetilde{1}\otimes\dots 
\stackrel{i}{\widecheck{\vphantom{\big(}g}}\dots\otimes\widetilde{1})
\,,
\end{equation}
where $\stackrel{i}{\widecheck{}}$ means insertion in position $i$.
\item
A pseudo conformal operad is naturally a conformal operad with the composition maps 
\begin{equation}\label{eq:operad8a-conf}
f_{\lambda_1,\dots,\lambda_n}(g_1,\dots,g_n)
=
(\dots((f\circ^1_{\lambda_1} g_1)\circ^{m_1+1}_{\lambda_2}g_2)\dots)\circ^{M_{n-1}+1}_{\lambda_n} g_n
\,.
\end{equation}
\end{enumerate}
\end{proposition}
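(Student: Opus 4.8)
The plan is to treat (a) and (b) as the conformal refinements of the classical equivalence between unital operads and pseudo operads recalled in Section~\ref{sec:2.1b}, carrying along at each step the bookkeeping of the spectral parameters. For part (a), I would first note that the products in \eqref{eq:operad8-conf} are well defined with values in $\widetilde{{\mathscr P}}(n+m-1)[\lambda]$, being specializations of the composition maps \eqref{eq:conf-comp}. The two identities \eqref{eq:circ-sesq} then follow immediately from \eqref{20170613:eq1-conf}: setting all spectral variables except the $i$-th equal to $0$ turns the factor $\partial+\lambda_1+\dots+\lambda_n$ into $\partial+\lambda$, while inserting $\partial g$ in slot $i$ produces the scalar $-\lambda$. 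The equivariance \eqref{eq:circ-equiv-conf} is derived from \eqref{eq:operad4-conf} exactly as \eqref{eq:circ-equiv} is derived from \eqref{eq:operad4}: take $g_k=\widetilde 1$ and $\tau_k=\mathrm{id}$ for $k\neq i$, with $g$ and $\tau$ in slot $i$; the only nonzero input spectral variable $\lambda$ sits in slot $i$, hence lands in slot $\sigma(i)$ after the relabelling $\lambda_k\mapsto\lambda_{\sigma^{-1}(k)}$, and $\sigma(\mathrm{id},\dots,\tau,\dots,\mathrm{id})$ collapses to $\sigma\circ_i\tau$.

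The heart of (a) is the associativity \eqref{eq:circ-assoc1-conf}--\eqref{eq:circ-assoc2-conf}, which I would obtain from the operadic associativity \eqref{eq:operad2-conf} together with the unit axioms \eqref{eq:1-conf}. Expanding both sides through \eqref{eq:operad8-conf} and applying \eqref{eq:operad2-conf}, each factor reduces according to where the slot index of the inserted $h$ falls. In the nested case $i\le j<i+m$ the index lands inside the block occupied by $g$; all other blocks are filled by units composed at spectral parameter $0$, so the second unit axiom collapses them, slot $i$ yields $g\circ^{j-i+1}_\mu h$, and the shift $\Gamma_i=\mu$ produced by \eqref{eq:operad2-conf} is precisely what turns $\lambda$ into $\lambda+\mu$, giving \eqref{eq:circ-assoc2-conf}. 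In the disjoint case $1\le j<i$ the index instead lands in the unit-filled slot $j$, and the corresponding factor becomes $\widetilde 1_\mu(h)$ with the \emph{same} parameter $\mu$ carried into the $j$-th argument of $f$.

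The technical crux is therefore the lemma $f_{\dots,\mu,\dots}(\dots\otimes\widetilde 1_\mu(h)\otimes\dots)=f_{\dots,\mu,\dots}(\dots\otimes h\otimes\dots)$, with $\widetilde 1_\mu(h)$ in the same slot as the spectral parameter $\mu$. I would prove it from the second sesquilinearity in \eqref{20170613:eq1-conf} and the unit axiom $\widetilde 1_{-\partial}(h)=h$: writing $\widetilde 1_\mu(h)=\sum_k\mu^k a_k$, sesquilinearity in slot $j$ gives $f_{\dots,\mu,\dots}(\dots\otimes(-\partial)^k a_k\otimes\dots)=\mu^k f_{\dots,\mu,\dots}(\dots\otimes a_k\otimes\dots)$, so summing over $k$ turns $\sum_k(-\partial)^k a_k=h$ on the left into $\sum_k\mu^k a_k=\widetilde 1_\mu(h)$ on the right. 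With this lemma slot $j$ collapses to $h$, and one recovers $(-1)^{p(g)p(h)}(f\circ^j_\mu h)\circ^{\ell+i-1}_\lambda g$, the Koszul sign arising from the single transposition of the odd fillers $g$ and $h$ in \eqref{eq:operad2-conf}; this is \eqref{eq:circ-assoc1-conf}. This interplay between $\widetilde 1_\mu(h)$ and the matching spectral parameter is the feature that has no classical counterpart, and I expect it to be the main obstacle in (a).

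For part (b), I would verify the conformal operad axioms for the iterated products \eqref{eq:operad8a-conf} by mirroring the classical combinatorial argument and adding the spectral bookkeeping. Sesquilinearity \eqref{20170613:eq1-conf} is a direct induction on the number of insertions via \eqref{eq:circ-sesq}: each insertion of $g_k$ shifts $\partial$ by $\lambda_k$ in the first identity (accumulating $\partial+\lambda_1+\dots+\lambda_n$), while inserting $\partial g_i$ factors out $-\lambda_i$, which passes through the remaining insertions by bilinearity. Equivariance \eqref{eq:operad4-conf} follows from \eqref{eq:circ-equiv-conf} by induction, assembling the $\sigma\circ_i\tau$ into the full composition $\sigma(\tau_1,\dots,\tau_n)$. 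The main obstacle here is the associativity \eqref{eq:operad2-conf}: one must reorganize the two nested strings of $\circ^i_\lambda$-insertions into a single string using \eqref{eq:circ-assoc1-conf}--\eqref{eq:circ-assoc3-conf} repeatedly, and the purely combinatorial rearrangement is exactly the classical statement cited to \cite{Mar96}. The genuinely new content is checking that the shifts $\lambda_i\mapsto\lambda_i+\Gamma_i$ of \eqref{eq:operad2-conf} are generated precisely by the $\lambda+\mu$ appearing in \eqref{eq:circ-assoc2-conf} whenever an inner insertion is nested inside an outer one; I expect this $\lambda$-accounting, rather than the combinatorics, to be the delicate part.
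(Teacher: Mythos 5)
Your proposal is correct and follows essentially the same route as the paper's proof: part (a) is established by expanding both sides of the associativity identities via \eqref{eq:operad2-conf} and reducing them to a common expression, and part (b) is the direct reverse translation, with the purely combinatorial reorganization deferred to the classical argument of \cite{Mar96}. Your unit-collapsing lemma $f_{\dots,\mu,\dots}(\dots\otimes\widetilde{1}_\mu(h)\otimes\dots)=f_{\dots,\mu,\dots}(\dots\otimes h\otimes\dots)$ is precisely the step the paper leaves implicit when it asserts that both sides equal the displayed common value, and your derivation of it from \eqref{eq:1-conf} and the sesquilinearity \eqref{20170613:eq1-conf} is valid.
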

\begin{proof}
The sesquilinearity axioms \eqref{20170613:eq1-conf}
translates, in terms of $\circ^i_\lambda$-products \eqref{eq:operad8-conf}, as \eqref{eq:circ-sesq}.
The associativity axiom \eqref{eq:operad2-conf} translates as \eqref{eq:circ-assoc1-conf}-\eqref{eq:circ-assoc2-conf}.
Indeed, by \eqref{eq:operad2-conf}, both sides of \eqref{eq:circ-assoc1-conf} are equal to
$$
(-1)^{p(g)p(h)}
f_{0,\dots\stackrel{j}{\widecheck{\mu}}\dots\stackrel{i}{\widecheck{\lambda}}\dots,0}
(\widetilde{1}\otimes\dots 
\stackrel{j}{\widecheck{\vphantom{\big(}h}}\dots
\stackrel{i}{\widecheck{\vphantom{\big(}g}}\dots\otimes\widetilde{1})
\,,
$$
and both sides of \eqref{eq:circ-assoc2-conf} are equal to
$$
f_{0,\dots\stackrel{i}{\widecheck{\lambda+\mu}}\dots,0}
(\widetilde{1}\otimes\dots 
\stackrel{i}{\widecheck{\vphantom{\big(}
g_{0,\dots\stackrel{j-i+1}{\widecheck{\mu}}\dots,0}
}}
(\widetilde{1}\otimes\dots 
\stackrel{j-i+1}{\widecheck{\vphantom{\big(}h}}\dots\otimes\widetilde{1})
\dots\otimes\widetilde{1})
\,.
$$
Furthermore, the equivariance axiom \eqref{eq:operad4-conf} of the unital conformal operad $\widetilde{{\mathscr P}}$
is translated to the equivariance \eqref{eq:circ-equiv-conf} for the $\circ^i_\lambda$-products.
This proves claim (a).

Conversely, assume $\widetilde{{\mathscr P}}$ is a conformal pseudo operad
and define the composition maps by \eqref{eq:operad8a-conf}.
The sesquilinearity \eqref{eq:circ-sesq},
the associativity \eqref{eq:circ-assoc1-conf}-\eqref{eq:circ-assoc2-conf}, and the equivariance \eqref{eq:circ-equiv-conf}
of the $\circ^i_{\lambda}$ products 
translate to the sesquilinearity \eqref{20170613:eq1-conf},
the associativity \eqref{eq:operad2-conf} and the equivariance \eqref{eq:operad4-conf}
of the composition maps \eqref{eq:operad8a-conf}.
\end{proof}

%%%
\subsection{$\mb Z_{\geq-1}$-graded Lie conformal superalgebra
associated to a conformal pseudo operad and basic cohomology complexes}\label{sec:lie-conf}

Given a conformal pseudo operad $\widetilde{{\mathscr P}}$,
consider the $\mb Z_{\geq-1}$-graded vector superspace
$\widetilde{W}(\widetilde{{\mathscr P}})=\bigoplus_{n\geq-1}\widetilde{W}_n$,
where
\begin{equation}\label{20170603:eq3}
\widetilde{W}_n=\widetilde{{\mathscr P}}(n+1)^{S_{n+1}}
=\big\{f\in\widetilde{{\mathscr P}}(n+1)\,\big|\,f^\sigma=f\,\forall \sigma\in S_{n+1}\big\}
\,.
\end{equation}

One defines a Lie conformal superalgebra structure on $\widetilde{W}$ as follows.
We define the $\Box_\lambda$-product of $f\in\widetilde{{\mathscr P}}(n+1)$ and $g\in\widetilde{{\mathscr P}}(m+1)$ by
\begin{equation}\label{eq:box-conf}
f\Box_\lambda g
=
\sum_{\sigma\in S_{m+1,n}}
(f\circ^1_{-\lambda-\partial} g)^{\sigma^{-1}}
\,\in\widetilde{{\mathscr P}}(m+n+1)[\lambda]\,,
\end{equation}
where, as in \eqref{eq:box}, $S_{m,n}\subset S_{m+n}$ denotes the subset of $(m,n)$-shuffles
and, as usual, in $f\circ^1_{-\lambda-\partial} g$, $\partial$ is moved to the left and applied to the coefficients.
\begin{theorem}\label{20170603:thm2-conf}
\begin{enumerate}[(a)]
\item
The $\Box_\lambda$-product \eqref{eq:box-conf} satisfies the following
sesquilinearity conditions:
$$
(\partial f)\Box_\lambda g=-\lambda f\Box_\lambda g
\,\,,\,\,\,\,
f\Box_\lambda(\partial g)=(\lambda+\partial) (f\Box_\lambda g)
\,.
$$
\item
Define the associator of the $\Box_\lambda$-product as
\begin{equation}\label{associator}
(f_\lambda g_\mu h)
=
(f\Box_\lambda g)\Box_{\lambda+\mu}h
-f\Box_\lambda (g\Box_\mu h)
\,.
\end{equation}
For $f\in\widetilde{W}_n$ we have the following symmetry relation:
\begin{equation}\label{20170608:eq2-conf}
(f_\lambda g_\mu h)
=
(-1)^{p(g)p(h)}
(f_\lambda h_{-\lambda-\mu-\partial} g)
\,,
\end{equation}
where $\partial$ is moved to the left and acts on all coefficients.
\item
For $f\in\widetilde{W}_n$ and $g\in\widetilde{W}_m$, 
we have $f\Box_\lambda g\in\widetilde{W}_{n+m}[\lambda]$.
\item
Consequently, $\widetilde{W}$ is a $\mb Z_{\geq-1}$-graded Lie conformal superalgebra
with $\lambda$-bracket given by ($f\in\widetilde{W}_n$, $g\in\widetilde{W}_m$)
\begin{equation}\label{20170603:eq4-conf}
[f_\lambda g]=f\Box_\lambda g-(-1)^{p(f)p(g)}g\Box_{-\lambda-\partial} f
\,.
\end{equation}
\end{enumerate}
\end{theorem}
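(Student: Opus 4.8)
The plan is to treat this as the conformal refinement of the non-conformal Theorem \ref{20170603:thm2}, keeping the combinatorial skeleton of that proof intact while tracking the spectral parameters $\lambda,\mu$ as labels that ride passively along the shuffle manipulations, and invoking the sesquilinearity \eqref{eq:circ-sesq} only when a $\partial$ must be pushed across the $-\lambda-\partial$ substitution that defines \eqref{eq:box-conf}. I would prove (a) first, as it is immediate from \eqref{eq:circ-sesq}. Writing $f\circ^1_\nu g=\sum_k c_k\nu^k$ with $c_k\in\widetilde{{\mathscr P}}(n+m+1)$, the product \eqref{eq:box-conf} is obtained by the operator substitution $\nu\mapsto-\lambda-\partial$ (with the target $\partial$ acting on the $c_k$ to the left) followed by summation over shuffles. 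Under this substitution, multiplication by $\nu$ becomes the operator $-\lambda-\partial$, while the extra target $\partial$ produced by \eqref{eq:circ-sesq} commutes with it; the two $\partial$-contributions therefore cancel, giving $(\partial f)\circ^1_{-\lambda-\partial}g=-\lambda\,(f\circ^1_{-\lambda-\partial}g)$ from the first relation in \eqref{eq:circ-sesq}, and $f\circ^1_{-\lambda-\partial}(\partial g)=(\lambda+\partial)(f\circ^1_{-\lambda-\partial}g)$ from the second. Since the $S_{m+n+1}$-action commutes with $\partial$ and with scalar multiplication by $\lambda$, summing over $\sigma\in S_{m+1,n}$ yields the two sesquilinearity relations of (a).

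Part (c) is purely a matter of equivariance and carries over essentially verbatim from the non-conformal Theorem \ref{20170603:thm2}(a). I would first show that $f\circ^1_\lambda g$ is invariant under the subgroup $S_{m+1}\times S_n\subset S_{m+n+1}$, where $S_{m+1}$ permutes the slots $1,\dots,m+1$ coming from $g$ and $S_n$ permutes the slots $m+2,\dots,m+n+1$ coming from the inputs $2,\dots,n+1$ of $f$. Indeed, taking $\sigma=1$ in \eqref{eq:circ-equiv-conf} gives $f\circ^1_\lambda g^\tau=(f\circ^1_\lambda g)^{1\circ_1\tau}$, so $g^\tau=g$ yields invariance under the $S_{m+1}$ factor; taking $\tau=1$ and $\sigma\in S_{n+1}$ fixing $1$ gives $f^\sigma\circ^1_\lambda g=(f\circ^1_\lambda g)^{\sigma\circ_1 1}$, so $f^\sigma=f$ yields invariance under the $S_n$ factor (both invariances of $f,g$ being built into \eqref{20170603:eq3}). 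Because the $(m+1,n)$-shuffles are a complete set of coset representatives for $S_{m+n+1}/(S_{m+1}\times S_n)$ and the summand is invariant under that subgroup, the sum \eqref{eq:box-conf} is $S_{m+n+1}$-invariant, so $f\Box_\lambda g\in\widetilde{W}_{n+m}[\lambda]$.

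The heart of the matter, and the step I expect to be the main obstacle, is the associator symmetry (b). I would expand both $(f\Box_\lambda g)\Box_{\lambda+\mu}h$ and $f\Box_\lambda(g\Box_\mu h)$ as double sums over shuffles of iterated $\circ^i_\lambda$-products and reduce each inner $(f\circ^1 g)\circ^j h$ using the associativity relations \eqref{eq:circ-assoc1-conf}--\eqref{eq:circ-assoc3-conf}. The terms split into \emph{nested} terms, where $h$ lands in one of the $m+1$ slots occupied by $g$ (governed by \eqref{eq:circ-assoc2-conf}, which fuses the two spectral parameters), and \emph{parallel} terms, where $g$ and $h$ occupy distinct slots of $f$ (governed by \eqref{eq:circ-assoc1-conf}/\eqref{eq:circ-assoc3-conf}). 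The crux is a careful reindexing of the shuffle sums, matched with the bookkeeping of the nested $-\lambda-\partial$ substitutions: one must check that the target $\partial$ appearing in the outer substitution acts consistently across the inner composition, so that the nested terms reassemble exactly into $f\Box_\lambda(g\Box_\mu h)$ and cancel, leaving only the parallel terms. Those parallel terms are manifestly invariant under exchanging the pair $(g,\lambda)$ with $(h,\mu)$ up to the Koszul sign $(-1)^{p(g)p(h)}$, once the parameter attached to $h$ is relabeled to $-\lambda-\mu-\partial$ via sesquilinearity, and the $S_{n+1}$-invariance of $f$ symmetrizes over the slots of $f$ receiving $g$ and $h$; this is precisely the right-hand side of \eqref{20170608:eq2-conf}. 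The only genuinely new difficulty relative to the non-conformal case is verifying that these shuffle reindexings are compatible with the $\partial$-shifts in the arguments.

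Finally, for (d) I would assemble (a)--(c) into the Lie conformal superalgebra axioms for the $\lambda$-bracket \eqref{20170603:eq4-conf}. Sesquilinearity of $[\cdot\,_\lambda\,\cdot]$ follows termwise from (a) together with the elementary arithmetic $-(-\lambda-\partial)-\partial=\lambda$. Skewsymmetry is built into the definition: substituting $\nu=-\lambda-\partial$ into $[g_\nu f]=g\Box_\nu f-(-1)^{p(f)p(g)}f\Box_{-\nu-\partial}g$ and using the same arithmetic gives $-(-1)^{p(f)p(g)}[g_{-\lambda-\partial}f]=f\Box_\lambda g-(-1)^{p(f)p(g)}g\Box_{-\lambda-\partial}f=[f_\lambda g]$. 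The Jacobi identity then follows from the associator symmetry (b) in exactly the same way that the right-supersymmetry of the $\Box$-associator yields the Jacobi identity in the non-conformal setting (the passage from (b) to (c) in Theorem \ref{20170603:thm2}): expanding $[f_\lambda[g_\mu h]]-(-1)^{p(f)p(g)}[g_\mu[f_\lambda h]]-[[f_\lambda g]_{\lambda+\mu}h]$ in terms of $\Box_\lambda$-products, every contribution organizes into associators to which \eqref{20170608:eq2-conf} and skewsymmetry apply, and they cancel in pairs.
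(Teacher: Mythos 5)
Your proposal is correct and follows essentially the same route as the paper's proof, which likewise reduces everything to the combinatorics of the non-conformal Theorem \ref{20170603:thm2} (i.e.\ \cite[Thm.3.4]{BDSHK19}) with the spectral parameters tracked through sesquilinearity: your nested/parallel split in (b) is exactly the paper's split of the shuffle sum over $\sigma\in S_{m+1,n}$ into $\sigma(1)=1$ (cancelling against $f\Box_\lambda(g\Box_\mu h)$ via \eqref{eq:circ-assoc2-conf}) and $\sigma(m+2)=1$ (yielding \eqref{20170608:eq2-conf} via $f=f^{(12)}$, equivariance, the shuffle bijection, and \eqref{eq:circ-assoc1-conf} with the $-\lambda-\mu-\partial$ relabeling), and your arguments for (a), (c), (d) match the paper's as well.
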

\begin{proof}
Claim (a) follows immediately by the definition \eqref{eq:box-conf} of the $\Box_\lambda$-product
and by the sesquilinearity axioms \eqref{eq:circ-sesq} of the  $\circ^i_\lambda$-products.
Next, we prove claim (b), following closely the Proof of \cite[Thm.3.4]{BDSHK19}.
Let $f\in\widetilde{W}_n$, $g\in\widetilde{P}(m+1)$ and $h\in\widetilde{P}(\ell+1)$.
By the definition of the $\Box_\lambda$-product and the same computations 
as in \cite[Eq.(3.18)]{BDSHK19}, we have
\begin{equation}\label{eq:assoc1}
\begin{split}
f\Box_\lambda (g\Box_\mu h)
& =
\sum_{\sigma\in S_{m+\ell+1,n}}
\sum_{\tau\in S_{\ell+1,m}}
\big(
f\circ^1_{-\lambda-\partial} 
(g\circ^1_{-\mu-\partial}h)^{\sigma^{-1}}
\big)^{\tau^{-1}} \\
& =
\sum_{\sigma\in S_{\ell+1,m,n}}
\big(
f\circ^1_{-\lambda-\partial} 
(g\circ^1_{-\mu-\partial}h)
\big)^{\sigma^{-1}} 
\,,
\end{split}
\end{equation}
where $S_{\ell,m,n}\subset S_{\ell+m+n}$ denotes the subset of $(\ell,m,n)$-suffles,
i.e. permutations satisfying $\sigma(1)<\dots<\sigma(\ell)$, $\sigma(\ell+1)<\dots<\sigma(\ell+m)$
and $\sigma(\ell+m+1)<\dots<\sigma(\ell+m+n)$.
On the other hand, we have
\begin{equation}\label{eq:assoc2}
(f\Box_\lambda g)\Box_{\lambda+\mu} h
=
\sum_{\sigma\in S_{m+1,n}}
\sum_{\tau\in S_{\ell+1,m+n}}
\big(
(f\circ^1_{-\lambda-\partial} g)^{\sigma^{-1}}
\circ^1_{-\lambda-\mu-\partial}h
\big)^{\tau^{-1}}
\,.
\end{equation}
The sum over $\sigma$ in the RHS splits in two:
the sum over the shuffles $\sigma\in S_{m+1,n}$ such that $\sigma(1)=1$
and the sum over the shuffles such that $\sigma(m+2)=1$.
By the same computations as in \cite[Eq.(3.20)]{BDSHK19},
the first contribution gives
\begin{equation}\label{eq:assoc3}
\begin{split}
& \sum_{\substack{\sigma\in S_{m+1,n} \\ \text{s.t. } \sigma(1)=1}}
\sum_{\tau\in S_{\ell+1,m+n}}
\big(
(f\circ^1_{-\lambda-\partial} g)^{\sigma^{-1}}
\circ^1_{-\lambda-\mu-\partial}h
\big)^{\tau^{-1}} \\
& =
\sum_{\sigma\in S_{\ell+1,m,n}}
\big(
(f\circ^1_{-\lambda-\partial} g)
\circ^1_{-\lambda-\mu-\partial}h
\big)^{\sigma^{-1}} 
\,.
\end{split}
\end{equation}
Using the sesquilinearity \eqref{eq:circ-sesq} and the associativity axiom \eqref{eq:circ-assoc2-conf} 
(with $i=j=1$) for the $\circ^1_\lambda$-product,
it is not hard to check that the RHS of \eqref{eq:assoc1} coincides with the RHS of \eqref{eq:assoc3},
so they cancel in the expression of the associator \eqref{associator}.
Hence,
\begin{equation}\label{eq:assoc4a}
(f_\lambda g_\mu h)
=
\sum_{\substack{\sigma\in S_{m+1,n} \\ \text{s.t. } \sigma(m+2)=1}}
\sum_{\tau\in S_{\ell+1,m+n}}
\big(
(f\circ^1_{-\lambda-\partial} g)^{\sigma^{-1}}
\circ^1_{-\lambda-\mu-\partial}h
\big)^{\tau^{-1}} 
\end{equation}
Using the sesquilinearity conditions \eqref{eq:circ-sesq}
and following the same line of computations as in \cite[Eq.(3.21)-(3.22)]{BDSHK19}, 
the above identity can be rewritten as
\begin{equation}\label{eq:assoc4}
(f_\lambda g_\mu h)
=
\sum_{\sigma\in S_{m+1,\ell+1,n-1}}
\big(
(f\circ^1_{\mu} g) \circ^{m+2}_{-\lambda-\mu-\partial}h
\big)^{\sigma^{-1}} 
\,.
\end{equation}
By assumption $f$ lies in $\widetilde{W}_n$, so $f=f^{(12)}$.
By applying the equivariance axiom \eqref{eq:circ-equiv-conf} twice, we get
\begin{align*}
& (f\circ^1_{\mu} g) \circ^{m+2}_{-\lambda-\mu-\partial}h
=
(f^{(12)}\circ^1_{\mu} g) \circ^{m+2}_{-\lambda-\mu-\partial}h \\
& =
(f\circ^2_{\mu} g)^{(12)\circ_11_{m+1}} \circ^{m+2}_{-\lambda-\mu-\partial}h
=
\big(
(f\circ^2_{\mu} g) \circ^{1}_{-\lambda-\mu-\partial}h
\big)^{((12)\circ_11_{m+1})\circ_{m+2}1_{\ell+1}}
\,.
\end{align*}
By the definition of $\circ_i$-products in the permutation groups, we have (cf. \cite[Eq.(2.16)]{BDSHK19})
$$
((12)\circ_11_{m+1})\circ_{m+2}1_{\ell+1}
=
(12)(1_{m+1},1_{\ell+1},1_{n-1})
\,.
$$
Hence, \eqref{eq:assoc4} gives
$$
(f_\lambda g_\mu h)
=
\sum_{\sigma\in S_{m+1,\ell+1,n-1}}
\big(
\big(
(f\circ^2_{\mu} g) \circ^{1}_{-\lambda-\mu-\partial}h
\big)^{\big(\sigma\cdot
((12)(1_{m+1},1_{\ell+1},1_{n-1}))^{-1}
\big)^{-1}}
\,.
$$
By \cite[Prop.2.5(b)]{BDSHK19},
we have a bijective correspondence $S_{m+1,\ell+1,n-1}\stackrel{\sim}{\rightarrow}S_{\ell+1,m+1,n-1}$
mapping $\sigma\mapsto\sigma\cdot((12)(1_{m+1},1_{\ell+1},1_{n-1}))^{-1}$.
As a consequence,
$$
(f_\lambda g_\mu h)
=
\sum_{\sigma\in S_{\ell+1,m+1,n-1}}
\big(
(f\circ^2_{\mu} g) \circ^{1}_{-\lambda-\mu-\partial}h
\big)^{\sigma^{-1}}
\,.
$$
We next apply the sesquilinearity axioms \eqref{eq:circ-sesq}
and the first associativity condition \eqref{eq:circ-assoc1-conf}
to get
$$
(f\circ^2_{\mu} g) \circ^{1}_{-\lambda-\mu-\partial}h
=
(-1)^{p(g)p(h)}
(f \circ^{1}_{-\lambda-\partial} h) \circ^{\ell+2}_{\mu} g
\,.
$$
Hence,
\begin{equation}\label{eq:assoc5}
(f_\lambda g_\mu h)
=
(-1)^{p(g)p(h)}
\sum_{\sigma\in S_{\ell+1,m+1,n-1}}
\big(
(f \circ^{1}_{-\lambda-\partial} h) \circ^{\ell+2}_{\mu} g
\big)^{\sigma^{-1}}
\,.
\end{equation}
Comparing \eqref{eq:assoc4} and \eqref{eq:assoc5}
and using again the sesquilinearity conditions \eqref{eq:circ-sesq},
we get \eqref{20170608:eq2-conf}, proving (b).
Claim (c) can be proved in the same way as \cite[Thm.3.4(a)]{BDSHK19}.
Claim (d) is an obvious consequence of (a), (b) and (c).
\end{proof}
\begin{remark}\label{rem:reimundo}
Notice that the identity \eqref{20170608:eq2-conf}
is an identity satisfied by compositions only,
so it is a formal consequence of identities \eqref{eq:circ-sesq}-\eqref{eq:circ-equiv-conf}
(see also \eqref{eq:circ-assoc} in Section \ref{sec:operads}).
\end{remark}

%%%
\subsection{Relation between linear and conformal (pseudo) operads}

\begin{proposition}\label{prop:lin-conf}
\begin{enumerate}[(a)]
\item
Given a conformal operad $\widetilde{{\mathscr P}}$,
we have a natural structure of a linear operad on the collection of vector superspaces ${\mathscr P}=({\mathscr P}(n))_{n\geq0}$,
where ${\mathscr P}(n)=\widetilde{{\mathscr P}}(n)/\partial\widetilde{{\mathscr P}}(n)$,
where the compositions maps 
${\mathscr P}(n) \otimes {\mathscr P}(m_1)\otimes\dots\otimes{\mathscr P}(m_n)\,\to\,{\mathscr P}(M_n)$
are defined by
\begin{equation}\label{eq:operad1-quot}
\bar f(\bar g_1\otimes\dots\otimes \bar g_n)
=
f_{0,\dots,0}(g_1\otimes\dots\otimes g_n) 
\,,
\end{equation}
for $\bar f\in {\mathscr P}(n)$, $\bar g_i\in{\mathscr P}(m_i)$, $i=1,\dots,n$,
where $f\in\widetilde{{\mathscr P}}(n)$, $g_i\in\widetilde{{\mathscr P}}(m_i)$, $i=1,\dots,n$,
are any their representatives,
and where the (right) action of $S_n$ on ${\mathscr P}(n)$
is induced by its action on $\widetilde{{\mathscr P}}(n)$.
Furthermore, if $\widetilde{{\mathscr P}}$ is a unital conformal operad
then ${\mathscr P}$ is a unital operad, with the same unit element $1\in{\mathscr P}(1)=\widetilde{{\mathscr P}}(1)/\partial\widetilde{{\mathscr P}}(1)$.
\item
Likewise, given a conformal pseudo operad $\widetilde{{\mathscr P}}$,
we have a natural structure of a linear pseudo operad on the collection 
${\mathscr P}=(\widetilde{{\mathscr P}}(n)/\partial\widetilde{{\mathscr P}}(n))_{n\geq0}$,
where the $\circ_i$-products 
${\mathscr P}(n) \otimes {\mathscr P}(m)\,\to\,{\mathscr P}(m+n-1)$
are defined by
\begin{equation}\label{eq:operad1-quotb}
\bar f\circ_i\bar g
=
f\circ^i_0 g 
\,,
\end{equation}
for $\bar f\in {\mathscr P}(n)$, $g\in{\mathscr P}(m)$,
where $f\in\widetilde{{\mathscr P}}(n)$, $g\in\widetilde{{\mathscr P}}(m)$,
are any their representatives,
and where the (right) action of $S_n$ on ${\mathscr P}(n)$
is induced by its action on $\widetilde{{\mathscr P}}(n)$.
\item
If $\widetilde{{\mathscr P}}$ is a conformal pseudo operad,
there is a canonical surjective map of $\mb Z_{\geq-1}$-graded Lie superalgebras 
\begin{equation}\label{eq:quot}
\widetilde W(\widetilde{{\mathscr P}})/\partial\widetilde W(\widetilde{{\mathscr P}})
\twoheadrightarrow
W({\mathscr P})
\,,
\end{equation}
which restricts to an isomorphism $\widetilde{W}_n/\partial\widetilde{W}_n\simeq W_n$ on the degree $n$,
provided that $\partial$ has zero kernel on $\widetilde{{\mathscr P}}(n)$, or that $n=0$.
Moreover, we have a compatible (with the map \eqref{eq:quot}) Lie superalgebra 
action of $W({\mathscr P})$ on $\widetilde W(\widetilde{{\mathscr P}})$,
preserving the gradings, given by
\begin{equation}\label{eq:W-action}
[\bar f,g]
:=
\tilde{f}\Box_0 g-(-1)^{p(\tilde{f})p(g)}g\Box_{-\partial}\tilde{f}
\in\widetilde{W}_{m+n}
\,,
\end{equation}
for $\bar f\in W_m$, $g\in \widetilde{W}_n$
and any $\tilde{f}\in \widetilde{W}_m$ representative of a preimage of $\bar f$ via the map \eqref{eq:quot}.
\end{enumerate}
\end{proposition}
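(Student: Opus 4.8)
The plan is to handle the three parts in order, in each case reducing the conformal statement to its $\lambda$-independent specialization and then checking that the reduction is insensitive to the choice of representatives. For parts (a) and (b) the only genuine point is well-definedness, after which every axiom is inherited by setting the $\lambda$'s to zero. First I would verify that \eqref{eq:operad1-quot} and \eqref{eq:operad1-quotb} do not depend on the chosen lifts: if $f$ is replaced by $f+\partial f'$, the first sesquilinearity identity in \eqref{20170613:eq1-conf} (resp. the first identity in \eqref{eq:circ-sesq}) shows the extra term equals $\partial\big(f'_{0,\dots,0}(g_1\otimes\dots\otimes g_n)\big)$ (resp. $\partial(f'\circ^i_0 g)$), hence vanishes modulo $\partial\widetilde{{\mathscr P}}$; if some $g_i$ is replaced by $g_i+\partial g_i'$, the second sesquilinearity identity produces a factor $-\lambda_i$ evaluated at $\lambda_i=0$, hence $0$. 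Granting well-definedness, the associativity axiom \eqref{eq:operad2} (resp. \eqref{eq:circ-assoc1}--\eqref{eq:circ-assoc2}) is obtained from \eqref{eq:operad2-conf} (resp. \eqref{eq:circ-assoc1-conf}--\eqref{eq:circ-assoc2-conf}) by putting all $\lambda_j,\mu_k=0$ (so every $\Gamma_i=0$), and the equivariance \eqref{eq:operad4} (resp. \eqref{eq:circ-equiv}) follows from \eqref{eq:operad4-conf} (resp. \eqref{eq:circ-equiv-conf}) at $\lambda=0$; in the unital case the two relations \eqref{eq:1-conf} give \eqref{eq:operad3} once one notes that $\widetilde 1_{-\partial}(f)\equiv\widetilde 1_{0}(f)$ modulo $\partial\widetilde{{\mathscr P}}$.

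For part (c) I would first recall the general fact that the quotient $\widetilde W/\partial\widetilde W$ of a Lie conformal superalgebra carries a Lie superalgebra bracket induced by the zeroth product $[\bar f,\bar g]=\overline{[f_\lambda g]|_{\lambda=0}}$, well-defined by the sesquilinearity of Theorem \ref{20170603:thm2-conf}(a). The canonical map \eqref{eq:quot} is then the degreewise map induced by $\pi\colon\widetilde{{\mathscr P}}\to\widetilde{{\mathscr P}}/\partial\widetilde{{\mathscr P}}$ on invariants, $\widetilde W_n=\widetilde{{\mathscr P}}(n+1)^{S_{n+1}}\to{\mathscr P}(n+1)^{S_{n+1}}=W_n$. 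That it is a homomorphism rests on the observation that modulo $\partial\widetilde{{\mathscr P}}$ one has $\pi(x\circ^1_{-\partial}y)=\pi(x)\circ_1\pi(y)$ (since $x\circ^1_{-\partial}y$ equals its constant term $x\circ^1_0 y$ up to a $\partial$-exact term), whence $\pi(f\Box_0 g)=\pi(f)\Box\pi(g)$, so that \eqref{20170603:eq4-conf} at $\lambda=0$ maps to \eqref{20170603:eq4}. Surjectivity is where $\mathrm{char}\,\mb F=0$ enters: any lift of an $S_{n+1}$-invariant class in $W_n$ can be symmetrized by the averaging projector $\frac1{|S_{n+1}|}\sum_{\sigma}(\cdot)^\sigma$ (legitimate because $\partial$ commutes with $S_{n+1}$) to an invariant lift in $\widetilde W_n$. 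For the degreewise isomorphism I must identify $\ker(\widetilde W_n\to W_n)=\widetilde W_n\cap\partial\widetilde{{\mathscr P}}(n+1)$ with $\partial\widetilde W_n$: when $n=0$ the group $S_1$ is trivial and there is nothing to check, while when $\partial$ is injective on $\widetilde{{\mathscr P}}(n+1)$ the unique $\partial$-preimage of an invariant element is forced to be invariant, giving the reverse inclusion.

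Finally, for the $W$-action I would observe that \eqref{eq:W-action} is precisely the zeroth product $\tilde f_{(0)}g=[\tilde f_\lambda g]|_{\lambda=0}$ in the Lie conformal superalgebra $\widetilde W$, so the assertion reduces to the standard fact that the zeroth product makes a Lie conformal superalgebra a module over its quotient Lie superalgebra by derivations. Concretely, $(\partial u)_{(0)}=0$ and $\tilde f_{(0)}\partial=\partial\tilde f_{(0)}$ follow from Theorem \ref{20170603:thm2-conf}(a), while the $\lambda=0$ and $\lambda=\mu=0$ specializations of the conformal Jacobi identity (available since $\widetilde W$ is a Lie conformal superalgebra by Theorem \ref{20170603:thm2-conf}(d)) show respectively that each $\tilde f_{(0)}$ is a derivation of the $\lambda$-bracket and that $[\tilde f_{(0)},\tilde g_{(0)}]=(\tilde f_{(0)}\tilde g)_{(0)}$, i.e. that $\bar f\mapsto\tilde f_{(0)}$ is a Lie superalgebra homomorphism; that it descends to a well-defined $W$-action uses the kernel identification of the previous step together with $(\partial u)_{(0)}=0$. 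The compatibility with \eqref{eq:quot} is then immediate: writing $\rho\colon\widetilde W\to W$ for the composite of $\widetilde W\twoheadrightarrow\widetilde W/\partial\widetilde W$ with \eqref{eq:quot}, one has $\rho([\bar f,g])=\rho([\tilde f_\lambda g]|_{\lambda=0})=[\bar f,\rho(g)]$ because $\rho$ is a homomorphism. The hard part, I expect, is exactly the well-definedness running through part (c): the careful bookkeeping of the substitution $\lambda=-\partial$ (with $\partial$ moved to the left) needed to see that $\partial$-exact contributions act trivially, together with the exchange of $S_{n+1}$-invariants with the $\partial$-quotient, which is what forces the hypotheses in the degreewise isomorphism statement.
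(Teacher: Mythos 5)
Your proposal is correct, and its overall architecture coincides with the paper's: well-definedness of \eqref{eq:operad1-quot} and \eqref{eq:operad1-quotb} via the sesquilinearity conditions \eqref{20170613:eq1-conf} and \eqref{eq:circ-sesq}, all operad axioms obtained by specializing the conformal axioms at $\lambda=0$, and triviality of the kernel action from the $\Box_\lambda$-sesquilinearity of Theorem \ref{20170603:thm2-conf}(a) (your citation of that theorem is the right one here, since $r\in\widetilde{W}_m\cap\partial\widetilde{{\mathscr P}}(m+1)$ is $\partial x$ for an $x$ that need not be invariant, and Theorem \ref{20170603:thm2-conf}(a) holds for arbitrary elements of $\widetilde{{\mathscr P}}$). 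The one genuine divergence is the surjectivity/invariant-lift step: you symmetrize an arbitrary lift of an $S_{n+1}$-invariant class by averaging $\frac1{(n+1)!}\sum_\sigma(\cdot)^\sigma$, whereas the paper factors the map as $\alpha\circ\beta$ through $\widetilde{{\mathscr P}}(n)^{S_n}/\big(\partial\widetilde{{\mathscr P}}(n)\cap\widetilde{{\mathscr P}}(n)^{S_n}\big)$ and proves surjectivity of $\beta$ by choosing an $S_n$-stable complement $U$ to $\partial\widetilde{{\mathscr P}}(n)$, reusing the unique representative $u\in U$ a second time to see that $[\bar f,g]$ lands in $\widetilde{W}_{m+n}$. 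Both devices are the same characteristic-zero semisimplicity in disguise, but yours is more direct (the existence of the complement $U$ itself requires an averaging argument), at the cost of having to note separately that the averaged lift is an invariant representative usable in \eqref{eq:W-action}; your framing of the action as the zeroth product $\tilde f_{(0)}=[\tilde f_\lambda\,\cdot\,]\big|_{\lambda=0}$ of the Lie conformal superalgebra $\widetilde{W}$ (the non-pro version of Lemma \ref{lem:rmoddr}) is also a slightly cleaner packaging of what the paper derives from \eqref{20170608:eq2-conf}.
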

\begin{proof}
Assume that $\widetilde{{\mathscr P}}$ is a conformal operad.
The sesquilinearity conditions \eqref{20170613:eq1-conf}
guarantee that the right-hand side of \eqref{eq:operad1-quot}
does not depend on the choice of representatives,
hence the composition maps are well defined.
Moreover, the associativity axiom \eqref{eq:operad2}
follows by \eqref{eq:operad2-conf} setting all $\lambda_i$'s and $\mu_j$'s equal to $0$.
By assumption, the action of $S_n$ on $\widetilde{{\mathscr P}}(n)$ commutes with the action of $\partial$,
hence it induces a well defined action on $\widetilde{{\mathscr P}}(n)/\partial\widetilde{{\mathscr P}}(n)={\mathscr P}(n)$.
Setting all $\lambda_i$'s equal to $0$ in \eqref{eq:operad4-conf} we obtain \eqref{eq:operad4}
Moreover, the unit element $1\in{\mathscr P}(1)$ satisfies the unity axiom \eqref{eq:operad3}.
This proves claim (a).
The proof of claim (b) is similar.

Let us prove claim (c). 
We have the canonical maps
\begin{align*}
& \widetilde{W}_{n-1}/\partial\widetilde{W}_{n-1}
\simeq
\widetilde{{\mathscr P}}(n)^{S_{n}}/\partial\widetilde{{\mathscr P}}(n)^{S_{n}}
\stackrel{\alpha}{\longrightarrow}
\widetilde{{\mathscr P}}(n)^{S_{n}}\big/\big(\partial\widetilde{{\mathscr P}}(n)\cap\widetilde{{\mathscr P}}(n)^{S_{n}}\big) \\
& \stackrel{\beta}{\longrightarrow}
\Big(\widetilde{{\mathscr P}}(n)/\partial\widetilde{{\mathscr P}}(n)\Big)^{S_{n}}
\simeq
{\mathscr P}(n)^{S_{n}}=W_{n-1}\,.
\end{align*}
The map $\alpha$ is surjective since, obviously, 
$\partial\widetilde{{\mathscr P}}(n)^{S_{n}}
\subset\partial\widetilde{{\mathscr P}}(n)\cap\widetilde{{\mathscr P}}(n)^{S_{n}}$.
Moreover, if $\partial$ has zero kernel on $\widetilde{{\mathscr P}}(n)$,
then $\partial\widetilde{{\mathscr P}}(n)^{S_{n}}
=\partial\widetilde{{\mathscr P}}(n)\cap\widetilde{{\mathscr P}}(n)^{S_{n}}$,
so $\alpha$ is bijective.
The quotient map 
$\widetilde{{\mathscr P}}(n)\to\widetilde{{\mathscr P}}(n)/\partial\widetilde{{\mathscr P}}(n)$
restricts to
$\widetilde{{\mathscr P}}(n)^{S_{n}}\to\Big(\widetilde{{\mathscr P}}(n)/\partial\widetilde{{\mathscr P}}(n)\Big)^{S_{n}}$,
which has kernel $\partial\widetilde{{\mathscr P}}(n)\cap\widetilde{{\mathscr P}}(n)^{S_{n}}$,
so $\beta$ is clearly injective.
We claim that $\beta$ is surjective as well.
Indeed, let $U\subset\widetilde{{\mathscr P}}(n)$ be an $S_{n}$-submodule 
complementary to $\partial\widetilde{{\mathscr P}}(n)$.
For $\bar f\in\widetilde{{\mathscr P}}(n)/\partial\widetilde{{\mathscr P}}(n)$
let $u\in U\subset\widetilde{{\mathscr P}}(n)$ be its unique representative in $U$.
If $\bar f$ is fixed by the action of $S_n$, then so is $u$,
so that $\beta(\bar u)=\bar f$. Hence, $\beta$ is surjective, as claimed.
The fact that \eqref{eq:quot} is a Lie superalgebra homomoprhism
is an immediate consequence of the definition of the $\circ_i$-products \eqref{eq:operad1-quotb}
of ${\mathscr P}=\widetilde{{\mathscr P}}/\partial\widetilde{{\mathscr P}}$,
comparing \eqref{eq:box} with \eqref{eq:box-conf}.
We are left to prove the last assertion, 
that \eqref{eq:W-action} defines an action 
of the Lie superalgebra $W({\mathscr P})$ on $\widetilde{W}(\widetilde{{\mathscr P}})$, preserving the gradings.
By the above arguments, the kernel of the map \eqref{eq:quot} coincides with the kernel 
of the map $\alpha$ above. Hence another representative ${\widetilde f}_1$ 
of a preimage of $\bar f$ differs from $\widetilde f$ by an element 
$r\in\widetilde{{\mathscr P}}(n+1)^{S_{n+1}}\cap\partial\widetilde{{\mathscr P}}(n+1)$.
Then, since $r$ is in the image of $\partial$,
by the sesquilinearity of the $\Box_\lambda$ product in Theorem \ref{20170603:thm2-conf}(a),
we have that $[r,g]=0$.
Hence, the right-hand side of \eqref{eq:W-action}
does not depend on the choice of the representative $\widetilde f$
and the action $[\bar f,g]$ is well defined.
Moreover, consider, as above, an $S_{m+1}$-submodule $U\subset\widetilde{{\mathscr P}}(m+1)$
complementary to $\partial\widetilde{{\mathscr P}}(m+1)$.
There is a unique $u\in U$ representative of a preimage of $\bar f$,
and since $\bar f$ is fixed by the action of $S_n$, 
we have $u\in\widetilde{{\mathscr P}}(m+1)^{S_m+1}=\widetilde{W}_m$.
Hence, $[\bar f,g]=[u_\lambda g]\big|_{\lambda=0}$ which,
by Theorem \ref{20170603:thm2-conf}(c), lies in $\widetilde{W}_{m+n}$, as wanted.
The fact that \eqref{eq:W-action} defines an action of the Lie superalgebra $W({\mathscr P})$
on $\widetilde{W}(\widetilde{{\mathscr P}})$ follows from \eqref{20170608:eq2-conf},
the proof being similar to the proof of Theorem \ref{20170603:thm2-conf}(d).
\end{proof}

Note that, if $X\in W_1$ is odd then the condition that $X\Box X=0$
implies that $[X,[X,\,\cdot]]=0$ on $\widetilde W(\widetilde{{\mathscr P}})$.
\begin{definition}\label{def:univ-lie-conf}
Let $\widetilde{{\mathscr P}}$ be a conformal pseudo operad.
\begin{enumerate}[(a)]
\item
The $\mb Z_{\geq-1}$-\emph{graded Lie conformal superalgebra} 
associated to $\widetilde{{\mathscr P}}$ 
is the $\mb Z_{\geq-1}$-graded superspace $\widetilde{W}(\widetilde{{\mathscr P}})
=\bigoplus_{n\geq-1}\widetilde{W}_n$  with $\lambda$-bracket defined by \eqref{20170603:eq4-conf}.
\item
The \emph{basic cohomology complex}, associated to $\widetilde{{\mathscr P}}$
and an odd element $X\in W_1$ such that $X\Box X=0$,
is $(\widetilde{W}(\widetilde{{\mathscr P}}),[X,\,\cdot])$.
\end{enumerate}
\end{definition}

%%%
\section{Pro-conformal pseudo operads}\label{sec:4}

In order to include, in the following Section \ref{sec:basic-chom-chiral},
the basic Chom and chiral operads over an $\mb F[\partial]$-module $V$ of infinite rank,
we need a generalization of the notion of conformal pseudo operad,
which allows formal power series in $\lambda$.
This leads to the notion of a pro-conformal pseudo operad $\widetilde{{\mathscr P}}$.
The corresponding $\mb Z_{\geq-1}$-graded space $\widetilde{W}(\widetilde{{\mathscr P}})$
of invariants with respect to the action of the symmetric groups
will then have the structure of a pro-Lie conformal superalgebra.

%%%
\subsection{Projective systems}\label{sec:proj-system}

Recall that a \emph{projective system} in a category
is a collection of objects $(V_q)_{q\geq0}$,
endowed with morphisms 
$V_0\stackrel{\pi_{1,0}}{\leftarrow} V_1\stackrel{\pi_{2,1}}{\leftarrow} V_2 \leftarrow\dots$.
The \emph{projective limit} $V=\varprojlim V_q$, if it exists, is an object endowed with
a collection of morphisms $\pi_q:\,V\to V_q$, $q\geq0$,
such that the following diagram is commutative:
$$
\xymatrixrowsep{1pc}
\xymatrix{
& V \ar[d]_{\pi_{q}} \ar[rd]^{\pi_{q+1}} \\
\dots & \ar[l] V_q & \ar[l]^{\pi_{q+1,q}} V_{q+1} & \ar[l] \dots
}
$$
and it satisfies the universal property associated to this diagram.

We will be interested, in particular, in two categories.
First, the category $\mb F[\partial]$-mod 
of vector superspaces $V$ with an even endomorphism $\partial\in\End V$,
where the morphisms are parity preserving linear maps commuting with $\partial$.

Then, the category $\mb F[\partial]$-mod-$S_n$
of vector superspaces $V$ endowed with parity preserving
left action of $\mb F[\partial]$ and right action of $S_n$
commuting with each other: $\partial(v^\sigma)=(\partial v)^\sigma$,
for every $v\in V$ and $\sigma\in S_n$;
here the morphisms between two such spaces
are parity preserving linear maps commuting 
both with the action $\partial$ and with the action of the symmetric groups $S_n$.

We have a functor from the category $\mb F[\partial]$-mod-$S_n$ 
to the category $\mb F[\partial]$-mod,
obtained by taking the submodule of $S_n$-invariants: $P\mapsto P^{S_n}$.
\begin{lemma}\label{lem:fd-sn}
Let $(P_q)_{q\geq0}$ be a projective system in the category $\mb F[\partial]$-mod-$S_n$.
Then taking $S_n$-invariants we have a projective system $(P_q^{S_n})_{q\geq0}$
in the category $\mb F[\partial]$-mod.
Moreover, if $(P_q)_{q\geq0}$ admits a projective limit $P=\varprojlim P_q$,
then so does $(P_q^{S_n})_{q\geq0}$, and
$$
\varprojlim (P_q^{S_n})= P^{S_n}
\,.
$$
In other words, taking projective limit commutes with taking $S_n$-invariants.
\end{lemma}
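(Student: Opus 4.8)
The plan is to prove both assertions by unwinding the relevant universal properties, the only genuinely new input being the $S_n$-equivariance of the structure maps. For the \emph{first assertion}, note that each transition map $\pi_{q+1,q}\colon P_{q+1}\to P_q$ is a morphism in $\mb F[\partial]$-mod-$S_n$, hence commutes with $\partial$ and with the $S_n$-action. First I would record that $P_q^{S_n}$ is $\partial$-invariant: if $v\in P_q^{S_n}$ then $(\partial v)^\sigma=\partial(v^\sigma)=\partial v$, so $P_q^{S_n}$ is an object of $\mb F[\partial]$-mod. Next, for $v\in P_{q+1}^{S_n}$ and $\sigma\in S_n$ one has $\pi_{q+1,q}(v)^\sigma=\pi_{q+1,q}(v^\sigma)=\pi_{q+1,q}(v)$, so $\pi_{q+1,q}$ restricts to an $\mb F[\partial]$-linear map $P_{q+1}^{S_n}\to P_q^{S_n}$. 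Thus $(P_q^{S_n})_{q\geq0}$ with these restricted maps is a projective system in $\mb F[\partial]$-mod.

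For the \emph{second assertion}, suppose $P=\varprojlim P_q$ exists in $\mb F[\partial]$-mod-$S_n$; then $P$ is automatically an object of that category and its structure maps $\pi_q\colon P\to P_q$, being morphisms there, are $S_n$-equivariant. As above they restrict to $\pi_q^{S_n}\colon P^{S_n}\to P_q^{S_n}$, which form a cone over $(P_q^{S_n})$. To verify the universal property, I would take an object $T$ of $\mb F[\partial]$-mod with a compatible family $f_q\colon T\to P_q^{S_n}$, i.e. $\pi_{q+1,q}\circ f_{q+1}=f_q$. Composing with the inclusions $P_q^{S_n}\hookrightarrow P_q$ produces a cone $T\to P_q$, so by the universal property of $P$ there is a unique morphism $f\colon T\to P$ in $\mb F[\partial]$-mod with $\pi_q\circ f=f_q$ for all $q$.

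The key step is to show that $f$ takes values in $P^{S_n}$. Fix $\sigma\in S_n$ and let $R_\sigma$ and $R_\sigma^{(q)}$ denote the actions of $\sigma$ on $P$ and on $P_q$. Since $\pi_q$ is equivariant and $f_q$ has image in $P_q^{S_n}$, for every $q$ we have $\pi_q\circ(R_\sigma\circ f)=R_\sigma^{(q)}\circ\pi_q\circ f=R_\sigma^{(q)}\circ f_q=f_q=\pi_q\circ f$. Hence $R_\sigma\circ f$ and $f$ induce the same cone over $(P_q)$, so by the \emph{uniqueness} clause of the universal property of $P$ they coincide: $R_\sigma\circ f=f$. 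As $\sigma$ was arbitrary, $f(T)\subseteq P^{S_n}$, and $f$ factors as $T\to P^{S_n}\hookrightarrow P$. Uniqueness of this factorization follows since $P^{S_n}\hookrightarrow P$ is a monomorphism and the factorization through $P$ is unique. This exhibits $P^{S_n}$, with the maps $\pi_q^{S_n}$, as $\varprojlim P_q^{S_n}$, which is the assertion.

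I expect the restriction of cones and the associated bookkeeping to be routine; the one point deserving care is the passage $R_\sigma\circ f=f$, where one must resist the temptation to check $S_n$-invariance ``componentwise'' and instead invoke the uniqueness half of the universal property of $P$. Conceptually the whole statement is an instance of the general fact that the invariants functor $(-)^{S_n}$, being right adjoint to the functor endowing an $\mb F[\partial]$-module with the trivial $S_n$-action, preserves every limit that exists; in characteristic $0$ one could alternatively split the projective system using the Reynolds idempotents $\tfrac1{n!}\sum_{\sigma\in S_n}\sigma$, which commute with $\partial$ and with all transition maps.
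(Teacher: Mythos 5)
Your proof is correct and follows essentially the same route as the paper, whose entire proof is the one-line remark that the lemma ``immediately follows by the universal properties of the projective limits, by restricting maps to the subspaces of invariants'' --- your restriction of transition maps and cones, plus the uniqueness-clause argument giving $R_\sigma\circ f=f$, is exactly that remark made precise. One cosmetic point: since the universal property of $P$ as a limit in $\mb F[\partial]$-mod-$S_n$ quantifies only over equivariant cones, when testing against a bare $\mb F[\partial]$-module $T$ you should (as your closing adjunction remark already indicates) endow $T$ with the trivial $S_n$-action, which makes the cone $T\to P_q$ equivariant and yields the invariance of $f$ directly.
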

\begin{proof}
It immediately follows by the universal properties of the projective limits,
by restricting maps to the subspaces of invariants.
\end{proof}
\begin{lemma}\label{lem:proj-pol}
Let $(P_q)_{p\geq0}$ be a projective system in the category $\mb F[\partial]$-mod.
Taking spaces of polynomials in the variable $\lambda$ commuting with $\partial$, 
we have a projective system 
$(P_q[\lambda])_{q\geq0}$ still in the category $\mb F[\partial]$.
Moreover, if $(P_q)_{q\geq0}$ admits a projective limit $P=\varprojlim P_q$,
then so does $(P_q[\lambda])_{q\geq0}$, and
$$
\varprojlim (P_q[\lambda])
=
\Big\{
a(\lambda)\in P[[\lambda]]
\,\Big|\,
\pi_q(a(\lambda))\in P_q[\lambda] \text{ for every } q\geq0
\Big\}
\subset P[[\lambda]]
\,.
$$
\end{lemma}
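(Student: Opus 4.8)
The plan is to exhibit the candidate limit object explicitly and verify it satisfies the universal property of the projective limit in $\mb F[\partial]$-mod, the whole subtlety being that the limit consists of power series rather than polynomials.

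First I would set up the projective system. Each morphism $\pi_{q+1,q}\colon P_{q+1}\to P_q$ extends coefficient-wise to $P_{q+1}[\lambda]\to P_q[\lambda]$, still denoted $\pi_{q+1,q}$, by $\sum_k c_k\lambda^k\mapsto\sum_k\pi_{q+1,q}(c_k)\lambda^k$. Since $\partial$ acts on $P_q[\lambda]$ coefficient-wise (commuting with $\lambda$) and each $\pi_{q+1,q}$ is parity preserving and commutes with $\partial$, these extensions are morphisms in $\mb F[\partial]$-mod and inherit the same compatibility relations; hence $(P_q[\lambda])_{q\geq0}$ is a projective system in $\mb F[\partial]$-mod.

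Next I would introduce the candidate object
$$
L=\Big\{a(\lambda)\in P[[\lambda]]\,\Big|\,\pi_q(a(\lambda))\in P_q[\lambda]\text{ for all }q\geq0\Big\},
$$
where the structure maps $\pi_q\colon P\to P_q$ of $P=\varprojlim P_q$ are extended coefficient-wise to $P[[\lambda]]\to P_q[[\lambda]]$, and I would set $\Pi_q(a(\lambda))=\pi_q(a(\lambda))$. One checks that $L$ is a $\partial$-submodule of $P[[\lambda]]$: if $\pi_q(a(\lambda))\in P_q[\lambda]$ then $\pi_q(\partial a(\lambda))=\partial\pi_q(a(\lambda))\in P_q[\lambda]$, since $\pi_q$ commutes with $\partial$ and $P_q[\lambda]$ is $\partial$-stable. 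The maps $\Pi_q$ are morphisms in $\mb F[\partial]$-mod satisfying $\pi_{q+1,q}\circ\Pi_{q+1}=\Pi_q$ by the compatibility $\pi_{q+1,q}\circ\pi_{q+1}=\pi_q$ of the structure maps of $P$. The key step is then the universal property: given $T\in\mb F[\partial]$-mod with compatible morphisms $\phi_q\colon T\to P_q[\lambda]$, I would write $\phi_q(t)=\sum_k c_{q,k}(t)\lambda^k$, observe that extracting the coefficient of $\lambda^k$ is a morphism $P_q[\lambda]\to P_q$ commuting with the $\pi_{q+1,q}$, so that for each fixed $k$ the family $(c_{q,k}(t))_q$ is compatible and determines a unique $c_k(t)\in P$ via $P=\varprojlim P_q$. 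Setting $\phi(t)=\sum_k c_k(t)\lambda^k$, I get $\pi_q(\phi(t))=\phi_q(t)\in P_q[\lambda]$, so $\phi(t)\in L$ and $\Pi_q\circ\phi=\phi_q$; linearity and commutation with $\partial$ pass through coefficient extraction, and uniqueness holds because an element of $L\subset P[[\lambda]]$ is determined by its $\lambda$-coefficients in $P$, which are determined by their $\pi_q$-images. This identifies $L$ with $\Pi_q$ as $\varprojlim(P_q[\lambda])$.

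I do not expect a genuine obstacle in the universal-property bookkeeping, which is routine. The one substantive point I would emphasize is that polynomiality is \emph{not} stable under the limit: the $\lambda$-degrees of the $a_q(\lambda)$ may grow without bound as $q\to\infty$, so a compatible family assembles into a formal power series in $P[[\lambda]]$ that need not lie in $P[\lambda]$. This is exactly why the limit must be cut out inside $P[[\lambda]]$ by the condition that every projection $\pi_q$ land in the polynomial submodule $P_q[\lambda]$, and it is precisely this phenomenon that later forces the passage from conformal to pro-conformal pseudo operads.
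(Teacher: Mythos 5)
Your proof is correct and is exactly the paper's argument spelled out in detail: the paper's proof consists of the single remark that the lemma ``immediately follows by the universal property of projective limits, by looking at the coefficients of each power of $\lambda$,'' which is precisely your coefficient-extraction construction of the mediating morphism $\phi(t)=\sum_k c_k(t)\lambda^k$. Your closing observation about unbounded $\lambda$-degrees forcing the limit into $P[[\lambda]]$ matches the paper's motivation for the pro-conformal setting.
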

\begin{proof}
It immediately follows by the universal property of projective limits,
by looking at the coefficients of each power of $\lambda$.
\end{proof}

%%%
\subsection{Pro-Lie conformal superalgebras}

\begin{definition}\label{def:pro-lca}
A \emph{pro-Lie conformal superalgebra} $R$ is
a vector superspace with an even endomorphism $\partial\in\End V$
endowed with a parity-preserving (formal power series valued) $\lambda$-bracket
\begin{equation}\label{eq:pro-lambda}
R\otimes R\to R[[\lambda]]
\,\,,\,\,\,\,
a\otimes b\mapsto [a_\lambda b]
\,.
\end{equation}
We require that the $\lambda$-bracket satisfies
the sesquilinearity \eqref{20170612:eq6} and the Jacobi identity \eqref{20170612:eq5},
which are now understood as identities between formal power series:
\begin{equation}\label{eq:pro-lca1}
\begin{split}
& [\partial a_\lambda b]=-\lambda[a_\lambda b]
\,,\,\,
[a_\lambda \partial b]=(\lambda+\partial)[a_\lambda b]
\,\text{ in } R[[\lambda]]\,, \\
& [a_{\lambda}[b_\mu c]]-(-1)^{p(a)p(b)}[b_\mu [a_\lambda ,b]]
=[[a_\lambda b]_{\lambda+\mu}c]
\,\text{ in } R[[\lambda,\mu]]
\,.
\end{split}
\end{equation}
In order to make sense of the skewsymmetry,
we require that $R=\varprojlim R_q$
is the projective limit of a projective system $(R_q)_{q\geq0}$
in the category $\mb F[\partial]$-mod,
and that, 
for every $q\geq0$, the $\lambda$-barcket composed with $\pi_q$ 
has polynomial values:
\begin{equation}\label{eq:lambda-proj}
\xymatrixcolsep{2pc}
\xymatrixrowsep{0.5pc}
\xymatrix{
& R[[\lambda]] \ar[rd]^{\pi_q} \\
R\otimes R \ar@{.>}[rd]_{\exists!} \ar[ru]^{[\cdot\,_\lambda\,\cdot]}  && R_q[[\lambda]] \\
& R_q[\lambda]\ar@{^{(}->}[ur] 
}
\end{equation}
We then require that the skewsymmetry condition \eqref{20170612:eq4}
holds, after composing with $\pi_q$, in $R_q[\lambda]$, for every $q\geq0$:
\begin{equation}\label{eq:pro-lca2}
\pi_q\big([a_\lambda b])
=
-(-1)^{p(a)p(b)}
\pi_q([b_{-\lambda-\partial}a])
\,\text{ in } R_q[\lambda]
\,.
\end{equation}
\end{definition}
\begin{remark}\label{rem:pro-lca}
By Lemma \ref{lem:proj-pol}, condition \eqref{eq:lambda-proj}
can be equivalently formulated by requesting that the $\lambda$-bracket \eqref{eq:pro-lambda}
is in fact a map
$$
[\cdot\,_\lambda\,\cdot]\,:\,\,
R\otimes R\to
\varprojlim(R_q[\lambda])
\,\subset\,R[[\lambda]]
\,.
$$
\end{remark}
\begin{example}\label{ex:cend}
Let $V$ be an $\mb F[\partial]$-module,
with a filtration 
$0=F^0V\subset F^1V\subset F^2V\subset\dots
\subset V=\bigcup_{q\geq0}F^qV$, consisting of $\mb F[\partial]$-submodules of finite rank.
Let $\Cend(V)$ be the space of conformal endomorphisms of $V$, i.e. linear maps
$\varphi_\sigma:\,V\to V[\sigma]$ satisfying the following sesquilinearity condition:
$$
\varphi_\sigma(\partial v)=(\partial+\sigma)\varphi_\sigma(v)
\,.
$$
The space $\Cend(V)$ then acquires the structure of a $\mb F[\partial]$-module,
by letting $\partial:\,\Cend(V)\to\Cend(V)$ act as 
$(\partial\varphi)_\sigma(v)=-\sigma\varphi_\sigma(v)$.
Moreover, we have a (formal power series valued) $\lambda$-bracket
$[\cdot\,_\lambda\,\cdot]:\,\Cend(V)\otimes\Cend(V)\to\Cend(V)[[\lambda]]$,
$\varphi\otimes\psi\mapsto[\varphi_\lambda\psi]$,
defined by the following identity:
\begin{equation}\label{eq:cend}
[\varphi_\lambda\psi]_{\sigma}(v)
=
\varphi_\lambda(\psi_{\sigma-\lambda}(v))
-
\psi_{\sigma-\lambda}(\varphi_\lambda(v))
\,.
\end{equation}
If $V$ is a finite rank module over $\mb F[\partial]$ , then $\Cend(V)$
is a Lie conformal algebra, see \cite[Sec.2.10]{Kac98}.
In general,
we consider the following projective system of $\mb F[\partial]$-modules:
$R_q$ is the space of linear maps 
$\bar\varphi_\sigma:\,F^qV\to V[\sigma]$ 
satisfying $\bar\varphi_\sigma(\partial v)=(\partial+\sigma)\bar\varphi_\sigma(v)$,
the action of $\partial_q:\,R_q\to R_q$ is given, as before, 
by $(\partial_q\bar\varphi)_\sigma(v)=-\sigma\bar\varphi_\sigma(v)$,
and the maps $\pi_{q+1,q}:\,R_{q+1}\to R_q$ are given by restrictions.
Then,
it is immediate to check that $\Cend(V)$ is indeed the projective limit of this system:
$\Cend(V)=\varprojlim R_q$.
Moreover, given $q\geq0$, let $\{u_i\}_{i=1}^N$ be a finite set of generators 
of the $\mb F[\partial]$-module $F^qV$.
Then, any element of $F^qV$ has the form $v=\sum_{i=1}^NP_i(\partial)u_i$,
where $P_i(\partial)\in\mb F[\partial]$, so, by the sesquilinearity, equation \eqref{eq:cend} gives
$$
[\varphi_\lambda\psi]_{\sigma}(v)
=
\sum_{i=1}^NP_i(\partial+\sigma)\big(
\varphi_\lambda(\psi_{\sigma-\lambda}(u_i))
-
\psi_{\sigma-\lambda}(\varphi_\lambda(u_i))
\big)
\,.
$$
In particular, the degree in $\lambda$ of all polynomials in the RHS is uniformly bounded 
with respect to $v\in F^qV$,
so that $\pi_q[\varphi_\lambda\psi]$ lies in $R_q[\lambda]$, as desired.
The proof of sesquilinearity, skew-symmetry and Jacobi identity
for the $\lambda$-bracket in $\Cend(V)$ is the same as for the finite rank case.
\end{example}
\begin{remark}\label{rem:rcend}
One may also consider the space $\RCend(V)$ of right conformal endomorphisms of $V$,
i.e. linear maps $\varphi_\sigma:\,V\to V[\sigma]$ 
such that $\varphi_\sigma(\partial v)=-\sigma\varphi_\sigma(v)$.
Then $\partial$ acts on $\RCend(V)$ by $(\partial\varphi)_\sigma(v)=(\partial+\sigma)\varphi_\sigma(v)$,
and there is a natural isomorphism $\Cend(V)\simeq\RCend(V)$,
given by $\varphi_\sigma\mapsto\varphi_{-\sigma-\partial}$.
\end{remark}
\begin{lemma}\label{lem:rmoddr}
If $R$ is a pro-Lie conformal superalgebra,
then $R/\partial R$ is a Lie superalgebra, with Lie bracket 
$[\bar a,\bar b]=\overline{[a_\lambda b]}\big|_{\lambda=0}$,
for $\bar a,\bar b\in R/\partial R$ and $a,b\in R$ any their representatives.
Moreover, we have a representation of the Lie superalgebra $R/\partial R$ on $R$,
given by $\bar a(b)=[a_\lambda b]\big|_{\lambda=0}$.
\end{lemma}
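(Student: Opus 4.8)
The plan is to fix representatives $a,b\in R$, set $[a_0b]:=[a_\lambda b]\big|_{\lambda=0}\in R$, and define the bracket on $R/\partial R$ by $[\bar a,\bar b]:=\overline{[a_0b]}$ and the action by $\bar a(b):=[a_0b]$. I would then verify, in order, well-definedness, skewsymmetry, the Jacobi identity, and the representation axiom, deriving the last three by specializing the pro-Lie conformal identities at $\lambda=\mu=0$.

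First I would settle well-definedness. Setting $\lambda=0$ in the sesquilinearity relations \eqref{eq:pro-lca1} gives $[(\partial a)_\lambda b]\big|_{\lambda=0}=0$ and $[a_\lambda(\partial b)]\big|_{\lambda=0}=\partial\big([a_\lambda b]\big|_{\lambda=0}\big)\in\partial R$. The first shows $[a_0b]$ depends only on $\bar a$, so both the action and the bracket are well-defined in the first slot; the second shows that changing $b$ within $\partial R$ changes $[a_0b]$ only within $\partial R$, so the bracket descends to $R/\partial R$ in the second slot as well. (The action $\bar a(b)$ is a genuine element of $R$, since we do not quotient its target.)

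For the Jacobi identity and the representation axiom I would simply set $\lambda=\mu=0$ in the Jacobi identity \eqref{eq:pro-lca1}, which holds as a genuine identity in $R[[\lambda,\mu]]$. This produces the identity $[a_0[b_0c]]-(-1)^{p(a)p(b)}[b_0[a_0c]]=[[a_0b]_0c]$ in $R$. Read as an equation between elements of $R$, its two sides are exactly $\bar a(\bar b(c))-(-1)^{p(a)p(b)}\bar b(\bar a(c))$ and $\overline{[a_0b]}(c)$, which is the representation axiom; projecting the same identity to $R/\partial R$ and using well-definedness yields the Jacobi identity for $[\,\cdot\,,\,\cdot\,]$ on $R/\partial R$.

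The main obstacle is skewsymmetry, i.e. $\overline{[a_0b]}=-(-1)^{p(a)p(b)}\overline{[b_0a]}$, because \eqref{eq:pro-lca2} is only available after composing with $\pi_q$ in $R_q[\lambda]$, and specializing the substitution $\mu\mapsto-\lambda-\partial$ at $\lambda=0$ produces the a priori infinite series $\sum_{n}(-\partial)^nC_n$, where $[b_\mu a]=\sum_n\mu^nC_n$. The plan is to exploit the projective structure: for each $q$ the element $\pi_q([b_\mu a])$ lies in $R_q[\mu]$, hence is polynomial, so $\pi_q(C_n)=0$ for $n\gg0$. Consequently $r:=(-1)^{p(a)p(b)}\sum_{n\ge1}(-\partial)^{n-1}C_n$ has, under each $\pi_q$, only finitely many nonzero terms and therefore defines a genuine element $r\in R=\varprojlim R_q$. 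Applying $\pi_q$ to \eqref{eq:pro-lca2} at $\lambda=0$ and using $[b_0a]=C_0$ gives $\pi_q\big([a_0b]+(-1)^{p(a)p(b)}[b_0a]\big)=\pi_q(\partial r)$ for all $q$; since $R$ embeds in $\prod_q R_q$, this forces $[a_0b]+(-1)^{p(a)p(b)}[b_0a]=\partial r\in\partial R$, which is precisely skewsymmetry modulo $\partial R$. With skewsymmetry, Jacobi, and the representation axiom established, the two assertions of the lemma follow.
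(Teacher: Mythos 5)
Your proposal is correct and follows the same route as the paper, whose proof is the one-line remark ``Same as in the `non-pro' case'': well-definedness and Jacobi by specializing the sesquilinearity and Jacobi identities at $\lambda=\mu=0$, and skewsymmetry modulo $\partial R$ from \eqref{eq:pro-lca2} at $\lambda=0$. Your extra step — showing that the tail $\sum_{n\geq 1}(-\partial)^{n-1}C_n$ defines a genuine element $r\in R=\varprojlim R_q$ because each $\pi_q(C_n)$ vanishes for $n\gg 0$, so that $[a_0b]+(-1)^{p(a)p(b)}[b_0a]=\partial r$ — is exactly the pro-specific detail the paper's reference to the classical case leaves implicit, and you resolve it correctly via Lemma \ref{lem:proj-pol} and the joint injectivity of the maps $\pi_q$.
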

\begin{proof}
Same as in the ``non-pro'' case.
\end{proof}

%%%
\subsection{Pro-conformal pseudo operads}

\begin{definition}\label{def:topological}
A \emph{pro-conformal pseudo operad} $\widetilde{{\mathscr P}}$ is a collection 
of vector superspaces $\widetilde{{\mathscr P}}(n)$, $n\geq0$,
with parity $p$, endowed with an even endomorphism 
$\partial:\,\widetilde{{\mathscr P}}(n)\to\widetilde{{\mathscr P}}(n)$, 
with a right action of the symmetric group $S_n$ preserving 
the parity and commuting with $\partial$,
denoted $f^\sigma$, for $f\in\widetilde{{\mathscr P}}(n)$ and $\sigma\in S_n$,
and with parity preserving (formal power series valued) $\circ^i_\lambda$-products 
\begin{equation}\label{eq:operad1-top}
\widetilde{{\mathscr P}}(n) \otimes \widetilde{{\mathscr P}}(m)
\to\widetilde{{\mathscr P}}(m+n-1)[[\lambda]] \,,\qquad 
f \otimes g \mapsto f\circ^i_{\lambda} g 
\,,
\end{equation}
satisfying the same \emph{sesquilinearity}, \emph{associativity} and \emph{equivariance} conditions 
as in \eqref{eq:circ-sesq}, \eqref{eq:circ-assoc1-conf}-\eqref{eq:circ-assoc2-conf} and \eqref{eq:circ-equiv-conf}, respectively.
(All these conditions make sense for formal power series.)
We require moreover that each $\widetilde{{\mathscr P}}(n)=\varprojlim\widetilde{{\mathscr P}}_q(n)$ 
is the projective limit of a projective system $(\widetilde{{\mathscr P}}_q(n))_{q\geq0}$ 
in the category $\mb F[\partial]$-mod-$S_n$,
and that, for every $q$, the composition of \eqref{eq:operad1-top} with $\pi_q$ 
has image in the space of polynomials:
\begin{equation}\label{eq:operad1b-top}
\pi_q(f\circ^i_{\lambda}g)
\,\in\,
\widetilde{{\mathscr P}}_q(m+n-1)[\lambda]
\,,\qquad f\in\widetilde{{\mathscr P}}(n),\,g\in \widetilde{{\mathscr P}}(m)
\,.
\end{equation}
\end{definition}
\begin{remark}\label{rem:pro-co}
By Lemma \ref{lem:proj-pol} (rather its generalization to polynomials with $n$ variables), 
condition \eqref{eq:operad1b-top}
can be equivalently formulated by requesting that for the $\circ^i_\lambda$-products \eqref{eq:operad1-top}
we have
$$
f\circ^i_{\lambda}g
\,\in\,
\varprojlim \big(
\widetilde{{\mathscr P}}_q(m+n-1)[\lambda]
\big)
\,\subset\,
\widetilde{{\mathscr P}}_q(m+n-1)[[\lambda]]
\,.
$$
\end{remark}

%%%
\subsection{$\mb Z_{\geq-1}$-graded pro-Lie conformal superalgebra 
$\widetilde{W}(\widetilde{{\mathscr P}})$}

Let $\widetilde{{\mathscr P}}$ be a pro-conformal pseudo operad.
We define the $\Box_\lambda$-product
\begin{equation}\label{eq:box-pro}
\Box_\lambda\colon \widetilde{{\mathscr P}}(n)\otimes \widetilde{{\mathscr P}}(m)
\to \widetilde{{\mathscr P}}(n+m-1)[[\lambda]]
\,,
\end{equation}
by formula \eqref{eq:box-conf}.
We observe that, by definition of pseudo conformal operad,
composing both $\circ^i_\lambda$ and $\Box_\lambda$
with the restriction maps 
$\pi_q:\,\widetilde{{\mathscr P}}(n+m-1)\to\widetilde{{\mathscr P}}_q(n+m-1)$,
we get images in the space of polynomials $\widetilde{{\mathscr P}}_q(n+m-1)[\lambda]$.
Hence, by Lemma \ref{lem:proj-pol}, the map \eqref{eq:box-pro}
has in fact images in
$$
\varprojlim(\widetilde{{\mathscr P}}_q(n+m-1)[\lambda])
\,\,\Big(
\subset
\widetilde{{\mathscr P}}(n+m-1)[[\lambda]]
\Big)\,.
$$
In particular, while $\Box_{-\lambda-\partial}$ a priori may not be defined,
for every $q\geq0$ it makes sense to consider the map
$$
\pi_q\circ \Box_{-\lambda-\partial}
\,:\,\,
\widetilde{{\mathscr P}}(n)\otimes \widetilde{{\mathscr P}}(m)
\to
\widetilde{{\mathscr P}}_q(n+m-1)[\lambda]
\,,
$$
and by the universal property of the projective limit, this collection of maps
induces a map, which makes sense to denote $\Box_{-\lambda-\partial}$
\begin{equation}\label{eq:box-pro2}
\Box_{-\lambda-\partial}
\,:\,\,
\widetilde{{\mathscr P}}(n)\otimes \widetilde{{\mathscr P}}(m)
\to
\varprojlim(\widetilde{{\mathscr P}}_q(n+m-1)[\lambda])
\subset
\widetilde{{\mathscr P}}(n+m-1)[[\lambda]]
\,.
\end{equation}
We may then define the $\lambda$-bracket 
\begin{equation}\label{eq:lambda-pro}
[\cdot\,_\lambda\,\cdot]
\,:\,\,
\widetilde{{\mathscr P}}(n)\otimes \widetilde{{\mathscr P}}(m)
\to
\varprojlim(\widetilde{{\mathscr P}}_q(n+m-1)[\lambda])
\subset
\widetilde{{\mathscr P}}(n+m-1)[[\lambda]]
\,,
\end{equation}
by the same formula \eqref{20170603:eq4-conf}.

The following is the analogue of Theorem \ref{20170603:thm2-conf} in the ``pro'' setting:
\begin{theorem}\label{20170603:thm2-pro}
Consider the $\mb Z_{\geq-1}$-graded vector superspace
of invariants with respect to the symmetric groups:
$$
\widetilde{W}(\widetilde{{\mathscr P}})=\bigoplus_{n\geq-1}\widetilde{W}_n
\,\,,\,\,\,\,
\widetilde{W}_n=\widetilde{{\mathscr P}}(n+1)^{S_{n+1}}
\,.
$$
Then $\widetilde{W}(\widetilde{{\mathscr P}})$ has the structure of a pro-Lie conformal superalgebra
with the $\lambda$-bracket defined in \eqref{eq:lambda-pro} 
(restricted to $\widetilde{W}(\widetilde{{\mathscr P}})$).
\end{theorem}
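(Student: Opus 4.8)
The plan is to verify, component by component, that $\widetilde{W}(\widetilde{{\mathscr P}})$ with the $\lambda$-bracket \eqref{eq:lambda-pro} satisfies every clause of Definition \ref{def:pro-lca}, reducing the structural identities to the purely formal computations already performed in the proof of Theorem \ref{20170603:thm2-conf}. The heavy analytic work of making $\Box_\lambda$, $\Box_{-\lambda-\partial}$ and hence the bracket well defined as maps into $\varprojlim(\widetilde{{\mathscr P}}_q(\ast)[\lambda])$ has already been carried out in the discussion preceding the theorem, so the task here is essentially one of bookkeeping plus one genuinely new point.

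First I would supply the projective-system structure. Applying Lemma \ref{lem:fd-sn} to the projective system $(\widetilde{{\mathscr P}}_q(n+1))_{q\geq0}$ in the category $\mb F[\partial]$-mod-$S_{n+1}$ and taking $S_{n+1}$-invariants produces, for each grading $n$, a projective system in $\mb F[\partial]$-mod with $\widetilde{W}_n=\widetilde{{\mathscr P}}(n+1)^{S_{n+1}}=\varprojlim_q\widetilde{{\mathscr P}}_q(n+1)^{S_{n+1}}$; since the transition maps preserve the grading this presents $\widetilde{W}(\widetilde{{\mathscr P}})$ as a graded pro-$\mb F[\partial]$-module. Next I would record that the bracket meets the two ambient requirements: condition \eqref{eq:lambda-proj} is exactly the observation, made just before the theorem and formalized in Remark \ref{rem:pro-lca}, that $\pi_q\circ\Box_\lambda$ and $\pi_q\circ\Box_{-\lambda-\partial}$ are polynomial-valued, so by Lemma \ref{lem:proj-pol} the bracket \eqref{eq:lambda-pro} lands in $\varprojlim(\widetilde{{\mathscr P}}_q[\lambda])$; and that the bracket preserves invariants is Theorem \ref{20170603:thm2-conf}(c), whose proof is purely operadic and transcribes verbatim with formal-power-series coefficients. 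Thus $[\cdot\,_\lambda\,\cdot]$ restricts to $\widetilde{W}\otimes\widetilde{W}\to\varprojlim_q(\widetilde{{\mathscr P}}_q(\ast)^{S_\ast}[\lambda])$.

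The sesquilinearity \eqref{eq:pro-lca1} and the Jacobi identity I would then obtain by re-running the arguments of Theorem \ref{20170603:thm2-conf}. The essential remark is that the sesquilinearity \eqref{eq:circ-sesq}, associativity \eqref{eq:circ-assoc1-conf}--\eqref{eq:circ-assoc2-conf}, and equivariance \eqref{eq:circ-equiv-conf} of the $\circ^i_\lambda$-products are, by Definition \ref{def:topological}, postulated to hold as identities of formal power series; consequently the chain of manipulations proving Theorem \ref{20170603:thm2-conf}(a) and the associator symmetry \eqref{20170608:eq2-conf} of part (b) remains valid word for word. Sesquilinearity of the bracket follows from part (a) together with the defining formula \eqref{20170603:eq4-conf}, and the Jacobi identity follows from \eqref{20170608:eq2-conf} by the same pre-Lie-to-Lie deduction as in part (d); because both sides lie in $\varprojlim_q(\widetilde{{\mathscr P}}_q[\lambda,\mu])$, it suffices to check these identities after each $\pi_q$, where all expressions are honest polynomials and every substitution is legitimate.

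I expect the one genuinely pro-specific step---and the main obstacle---to be the skewsymmetry \eqref{eq:pro-lca2}, inseparable from the correct reading of the substitution $\lambda\mapsto-\lambda-\partial$. In the conformal (polynomial) case of Theorem \ref{20170603:thm2-conf} this substitution is always defined, whereas here $\Box_{-\lambda-\partial}$ exists only as the projective-limit map \eqref{eq:box-pro2}, which is precisely why skewsymmetry is demanded only after $\pi_q$. I would therefore argue at the level of each quotient: after applying $\pi_q$ the values of $f\Box_\lambda g$ and of $g\Box_{-\lambda-\partial}f$ are genuine polynomials, so expanding $\pi_q([g_\mu f])$ via \eqref{20170603:eq4-conf}, substituting $\mu=-\lambda-\partial$, and invoking part (a) yields the equality $\pi_q([f_\lambda g])=-(-1)^{p(f)p(g)}\pi_q([g_{-\lambda-\partial}f])$ in $\widetilde{{\mathscr P}}_q(n+m+1)^{S}[\lambda]$. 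Establishing this for every $q$ completes the verification that $\widetilde{W}(\widetilde{{\mathscr P}})$ is a pro-Lie conformal superalgebra.
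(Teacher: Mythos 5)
Your proposal is correct and takes essentially the same route as the paper's proof: Lemma \ref{lem:fd-sn} exhibits $\widetilde{W}_n=\varprojlim_q\widetilde{{\mathscr P}}_q(n+1)^{S_{n+1}}$ as a projective limit in $\mb F[\partial]$-mod, and the Lie conformal superalgebra axioms are reduced, by composing with the projections $\pi_q$, to the polynomial-valued computations already carried out in Theorem \ref{20170603:thm2-conf}. Your explicit treatment of skewsymmetry after each $\pi_q$ (and of $\Box_{-\lambda-\partial}$ via \eqref{eq:box-pro2}) simply spells out what the paper's terse reduction leaves implicit.
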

\begin{proof}
By assumption each $\widetilde{{\mathscr P}}(n+1)=\varprojlim \widetilde{{\mathscr P}}_q(n+1)$ 
is the projective limit of a projective system $(\widetilde{{\mathscr P}}_q(n+1))_{q\geq0}$ in the category $\mb F[\partial]$-mod-$S_n$.
Hence, by Lemma \ref{lem:fd-sn},
$\widetilde{W}_n$ is the projective limit of the projective system $(\widetilde{{\mathscr P}}_q(n+1))^{S_{n+1}}$,
in the category $\mb F[\partial]$-mod.
The Lie conformal algebra axioms can be proved in the same way as for the proof of Theorem \ref{20170603:thm2-conf}:
the difference here is that $\lambda$-brackets take value in formal power series,
but applying the projection maps $\pi_q$ we reduce each time to polynomials.
\end{proof}

%%%
\subsection{Relation between linear and pro-conformal pseudo operads}

As in the ``non-pro'' case, given a pro-conformal pseudo operad $\widetilde{{\mathscr P}}$,
the quotients ${\mathscr P}(n)=\widetilde{{\mathscr P}}(n)/\partial\widetilde{{\mathscr P}}(n)$ form a linear pseudo operad,
with the action of $S_n$ induced by its action on $\widetilde{{\mathscr P}}(n)$
and the $\circ_i$-products defined by \eqref{eq:operad1-quotb}.
Then the ``pro'' analogue of Proposition \ref{prop:lin-conf} holds:
\begin{proposition}\label{prop:univ-lie-pro}
Let $\widetilde{{\mathscr P}}$ be a pro-conformal pseudo operad
and consider 
the linear pseudo operad ${\mathscr P}=\widetilde{{\mathscr P}}/\partial\widetilde{{\mathscr P}}$ defined above,
the $\mb Z_{\geq-1}$ Lie superalgebra $W({\mathscr P})$,
the $\mb Z_{\geq-1}$ pro-Lie conformal superalgebra $\widetilde{W}(\widetilde{{\mathscr P}})$
from Theorem \ref{20170603:thm2-pro},
and the $\mb Z_{\geq-1}$ Lie superalgebra 
$\widetilde{W}(\widetilde{{\mathscr P}})/\partial \widetilde{W}(\widetilde{{\mathscr P}})$
(cf. Lemma \ref{lem:rmoddr}).
There is a canonical surjective map of $\mb Z_{\geq-1}$-graded Lie superalgebras 
$\widetilde W(\widetilde{{\mathscr P}})/\partial\widetilde W(\widetilde{{\mathscr P}})
\twoheadrightarrow W({\mathscr P})$.
This map restricts to an isomorphism $\widetilde{W}_n/\partial\widetilde{W}_n\simeq W_n$ on degree $n$,
provided that $\partial$ has zero kernel on $\widetilde{{\mathscr P}}(n)$, or that $n=0$.
We have a compatible Lie superalgebra action of $W({\mathscr P})$ on $\widetilde W(\widetilde{{\mathscr P}})$,
preserving the gradings, given by \eqref{eq:W-action},
where $\Box_{-\partial}$ is understood via the universal property of the projective limit,
as for \eqref{eq:box-pro2}.
\end{proposition}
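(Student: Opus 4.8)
The plan is to reduce every claim to the already-proved ``non-pro'' Proposition~\ref{prop:lin-conf}, checking only that the few places where the projective-limit structure enters cause no new difficulty. The construction of the surjection $\widetilde W(\widetilde{{\mathscr P}})/\partial\widetilde W(\widetilde{{\mathscr P}})\twoheadrightarrow W({\mathscr P})$ and the analysis of its kernel involve only the underlying $\mb F[\partial]$- and $S_n$-module structures of the spaces $\widetilde{{\mathscr P}}(n)$, and not the $\circ^i_\lambda$-products; hence I would reproduce verbatim the factorization through the maps $\alpha$ and $\beta$ from the proof of Proposition~\ref{prop:lin-conf}(c). As there, $\alpha$ is surjective in general and bijective when $\partial$ has zero kernel on $\widetilde{{\mathscr P}}(n+1)$, while $\beta$ is bijective after fixing an $S_{n+1}$-complement $U$ to $\partial\widetilde{{\mathscr P}}(n+1)$. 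This yields the surjection and the degree-$n$ isomorphism statement unchanged.

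Next I would verify the homomorphism property by comparing the linear $\Box$-product \eqref{eq:box} with the conformal $\Box_\lambda$-product \eqref{eq:box-conf}, recalling that the bracket on $\widetilde W/\partial\widetilde W$ is $[\bar a,\bar b]=\overline{[a_\lambda b]}\big|_{\lambda=0}$ (Lemma~\ref{lem:rmoddr}, valid in the pro setting) and that $\bar f\circ_i\bar g$ is the class of $f\circ^i_0 g$ by \eqref{eq:operad1-quotb}. The single new point is that after setting $\lambda=0$ the term $f\circ^1_{-\partial}g$ is now an element of $\varprojlim\big(\widetilde{{\mathscr P}}_q(m+n-1)[\lambda]\big)$ rather than a finite sum. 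I would observe that $f\circ^1_{-\partial}g-f\circ^1_0 g$ lies in $\partial\widetilde{{\mathscr P}}(m+n-1)$: applying each $\pi_q$ it equals $\partial_q$ of a genuine (finite) element of $\widetilde{{\mathscr P}}_q(m+n-1)$, and these are compatible, so the difference is $\partial s$ for some $s\in\widetilde{{\mathscr P}}(m+n-1)$. Consequently $\overline{f\circ^1_{-\partial}g}=\bar f\circ_1\bar g$ and $\overline{f\Box_\lambda g}\big|_{\lambda=0}=\bar f\Box\bar g$, so the map respects brackets.

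For the action \eqref{eq:W-action} I would first read $\Box_{-\partial}$ through \eqref{eq:box-pro2}, performing the substitution $\lambda\mapsto0$ in $\Box_{-\lambda-\partial}$ level by level; this makes the right-hand side of \eqref{eq:W-action} a bona fide element of $\widetilde{{\mathscr P}}(m+n+1)$. Independence of the chosen lift $\tilde f$ is then the same computation as before: a second lift differs by some $r\in\partial\widetilde{{\mathscr P}}(m+1)\cap\widetilde W_m$, and writing $r=\partial s$, the sesquilinearity of the $\lambda$-bracket from Theorem~\ref{20170603:thm2-pro} (matching Theorem~\ref{20170603:thm2-conf}(a)) gives $[r_\lambda g]=-\lambda[s_\lambda g]$, which vanishes at $\lambda=0$, so $[r,g]=0$. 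That the action preserves the grading, $[\bar f,g]\in\widetilde W_{m+n}$, follows from $[\bar f,g]=[u_\lambda g]\big|_{\lambda=0}$ for the $S_{m+1}$-invariant lift $u\in U$, together with closure of $\widetilde W$ under the $\lambda$-bracket (Theorem~\ref{20170603:thm2-pro}). The Jacobi-type identity making \eqref{eq:W-action} a genuine $W({\mathscr P})$-action follows, exactly as in Theorem~\ref{20170603:thm2-conf}(d) and Proposition~\ref{prop:lin-conf}(c), from the symmetry of the associator \eqref{20170608:eq2-conf}, which holds in the pro setting by Theorem~\ref{20170603:thm2-pro}.

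I expect the only real obstacle to be bookkeeping rather than mathematics: one must consistently interpret every occurrence of $\Box_{-\partial}$, $\circ^1_{-\partial}$ and the evaluations at $\lambda=0$ through the projective limit, checking at each finite level $q$ that the relevant expressions are polynomial in $\lambda$ (which is precisely condition \eqref{eq:operad1b-top}) before passing to the limit. Once this discipline is in place, no argument beyond Proposition~\ref{prop:lin-conf} is needed.
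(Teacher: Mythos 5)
Your proposal is correct and takes essentially the same route as the paper, whose entire proof is the remark that the arguments of Proposition~\ref{prop:lin-conf} carry over and that ``passing to the projective limit is straightforward.'' Your elaboration supplies exactly the details the paper leaves implicit — in particular the verification that $f\circ^1_{-\partial}g - f\circ^1_0 g\in\partial\widetilde{{\mathscr P}}(m+n-1)$ via compatibility of the level-$q$ antiderivatives, and the level-by-level reading of $\Box_{-\partial}$ through \eqref{eq:box-pro2} — and these checks are sound.
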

\begin{proof}
The same arguments as for the proof of Proposition \ref{prop:lin-conf} apply in this case.
Passing to the projective limite is straightforward.
\end{proof}
\begin{definition}\label{def:univ-lie-pro}
The \emph{basic cohomology complex}, associated to a pro-conformal operad $\widetilde{{\mathscr P}}$
and an odd element $X\in W_1$ such that $X\Box X=0$, is $(\widetilde{W}(\widetilde{{\mathscr P}}),[X,\,\cdot])$.
\end{definition}

%%%
\section{Chom and Chiral pro-conformal pseudo operads}\label{sec:basic-chom-chiral}

%%%
\subsection{The Chom pro-conformal pseudo operad}\label{sec:5.1}

Let $V$ be a vector superspace with a given even endomorphism $\partial\in\End(V)$.
The \emph{Chom pro-conformal pseudo operad} $\widetilde{\mc{C}hom}(\Pi V)$ is defined as 
the collection of superspaces
$\widetilde{\mc{C}hom}(\Pi V)(n)$, $n\geq0$,
consisting of all linear maps
$$
f_{\lambda_1,\dots,\lambda_n}\colon (\Pi V)^{\otimes n}\to
\Pi V[\lambda_1,\dots,\lambda_n]
\,,
$$
satisfying the same sesquilinearity conditions as in \eqref{20170613:eq1},
except that now both sided are considered in $\Pi V[\lambda_1,\dots,\lambda_n]$
(and not in its quotient by $\langle\partial+\lambda_1\dots+\lambda_n\rangle$).
In particular, $\widetilde{\mc{C}hom}(\Pi V)(0)=\Pi V$
and $\widetilde{\mc{C}hom}(\Pi V)(1)=\RCend(V)$ (cf. Remark \ref{rem:rcend}).
We define an $\mb F[\partial]$-module structure on $\widetilde{\mc{C}hom}(\Pi V)(n)$
by letting
\begin{equation}\label{eq:dchom}
(\partial f)_{\lambda_1,\dots,\lambda_n}
=
(\partial+\lambda_1+\dots+\lambda_n)\,f_{\lambda_1,\dots,\lambda_n}
\,.
\end{equation}
The right action of $\sigma\in S_n$ on $\widetilde{\mc{C}hom}(\Pi V)(n)$
is given by \eqref{20170613:eq3},
where now both sides of the equality are considered in $\Pi V[\lambda_1,\dots,\lambda_n]$.
Finally, the $\circ^i_\lambda$product of $f\in\widetilde{\mc{C}hom}(\Pi V)(n)$
and $g\in\widetilde{\mc{C}hom}(\Pi V)(m)$ 
is defined by (cf. \eqref{eq:operad8-conf}):
\begin{equation}\label{20170613:eq2-pro}
\begin{array}{l}
\displaystyle{
\vphantom{\Big(}
(f\circ^i_\lambda g)_{\mu_1,\dots,\mu_{m+n-1}}
(v_1\otimes\dots\otimes v_{m+n-1})
:=
\pm
f_{\mu_1,\dots,\stackrel{i}{\widecheck{\vphantom{\big(}\lambda}}+\mu_i+\dots+\mu_{i+m-1},\dots,\mu_{m+n-1}}
\big(
} \\
\displaystyle{
\vphantom{\Big(}
\qquad v_1\otimes\dots\otimes
\stackrel{i}{\widecheck{\vphantom{\big(}g}}_{\mu_i,\dots,\mu_{i+m-1}}(v_i\otimes\dots\otimes v_{i+m-1})
\otimes\dots\otimes v_{m+n-1}
\big)
\,,}
\end{array}
\end{equation}
where the sign $\pm$ is $(-1)^{\bar p(g)(\bar p(v_1)+\dots+\bar p(v_{i-1}))}$.
\begin{remark}
If $V$ is free as $\mb F[\partial]$-module,
then $\widetilde{\mc{C}hom}(\Pi V)$ is in fact a unital pro-conformal operad,
the unity being
\begin{equation}\label{eq:1chom}
1=\id_V\in\widetilde{\mc{C}hom}(\Pi V)(1)/\partial\widetilde{\mc{C}hom}(\Pi V)(1)\simeq\End_{\mb F[\partial]} V
\,.
\end{equation}
If $V$ has torsion, $\id_V$ may not lie 
in $\widetilde{\mc{C}hom}(\Pi V)(1)/\partial\widetilde{\mc{C}hom}(\Pi V)(1)$.
For example, for $V=\mb F$, we have $\widetilde{\mc{C}hom}(\Pi V)(n)=0$ for every $n$.
\end{remark}

The novelty here is that,
even though both sides of \eqref{20170613:eq2-pro}
lie in the space of polynomials
$\Pi V[\lambda,\mu_1,\dots,\mu_{m+n-1}]$
for every $v_1\otimes\dots\otimes v_{m+n-1}\in (\Pi V)^{\otimes(m+n-1)}$,
equation \eqref{20170613:eq2-pro} 
defines in general a formal powers series valued $\circ^i_\lambda$-product:
$$
f\circ^i_{\lambda}g
\,\in\,\widetilde{\mc{C}hom}(\Pi V)(m+n-1)[[\lambda]]
\,,
$$
since the degree in the variable $\lambda$ of the polynomials in the right-hand side
of \eqref{20170613:eq2-pro} may not be uniformly bounded with respect to the choice of the vectors 
$v_1,\dots,v_{m+n-1}$.

We therefore assume that $V$ has as increasing exhaustive filtration 
by finite rank $\mb F[\partial]$-submodules compatible with the $\mb Z/2\mb Z$-grading of $V$:
\begin{equation}\label{eq:filtrV}
0=F^0V\subset F^1V\subset F^2V\subset\dots\subset V=\bigcup_qF^qV
\,.
\end{equation}
To such filtration, we associate projective systems $\widetilde{\mc{C}hom}(\Pi V)_q(n)$, $q\geq0$,
defined as the spaces of maps 
$$
\bar f_{\lambda_1,\dots,\lambda_n}\colon (\Pi F^qV)^{\otimes n}\to
\Pi V[\lambda_1,\dots,\lambda_n]
\,,
$$
satisfying the same sesquilinearity conditions \eqref{20170613:eq1}.
The maps 
\begin{equation}\label{eq:chom-proj-system}
\pi_{q+1,q}:\,\widetilde{\mc{C}hom}(\Pi V)_{q+1}(n)\to\widetilde{\mc{C}hom}(\Pi V)_q(n),\,q\geq0\,,
\end{equation}
are given by restrictions.

\begin{proposition}\label{prop:chom-pro}
The supserspaces $\widetilde{\mc{C}hom}(\Pi V)(n)$, $n\geq0$,
form a pro-conformal pseudo operad,
with the $\mb F[\partial]$-module structure defined by \eqref{eq:dchom},
%the unit element \eqref{eq:1chom},
the action of $S_n$ defined by \eqref{20170613:eq3}, 
the $\circ^i_\lambda$-products defined by \eqref{20170613:eq2-pro},
and the projective systems \eqref{eq:chom-proj-system}.
\end{proposition}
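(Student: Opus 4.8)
The plan is to verify, one by one, the defining conditions of a pro-conformal pseudo operad from Definition~\ref{def:topological}, organizing the work so that the structural axioms reduce to the insertion computations already carried out for the operad $\mc{C}hom(\Pi V)$, isolating the one genuinely new ``pro'' ingredient.

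First I would dispatch the structural points. Formula \eqref{eq:dchom} manifestly defines an even endomorphism $\partial$ of each $\widetilde{\mc{C}hom}(\Pi V)(n)$; a one-line check shows it preserves the sesquilinearity \eqref{20170613:eq1} and commutes with the $S_n$-action \eqref{20170613:eq3}, so each $\widetilde{\mc{C}hom}(\Pi V)(n)$ lies in $\mb F[\partial]$-mod-$S_n$. Next I would confirm that \eqref{20170613:eq2-pro} lands in $\widetilde{\mc{C}hom}(\Pi V)(m+n-1)[[\lambda]]$: inserting $\partial$ in the $j$-th argument and invoking sesquilinearity of $f$ (for $j<i$ or $j>i+m-1$) or of $g$ (for $i\le j\le i+m-1$) produces the required factor $-\mu_j$ in each $\lambda$-coefficient. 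The sesquilinearity conditions \eqref{eq:circ-sesq} are then direct: for the first, \eqref{eq:dchom} makes $\partial f$ act by $\partial$ plus the sum of $f$'s spectral parameters, which here is $\partial+\lambda+\sum_j\mu_j$ because the $i$-th parameter is $\lambda+\mu_i+\dots+\mu_{i+m-1}$, and this is precisely $(\lambda+\partial)(f\circ^i_\lambda g)$ read through \eqref{eq:dchom}; for the second, $\partial g$ contributes $\partial+\mu_i+\dots+\mu_{i+m-1}$ in the $i$-th slot of $f$, and sesquilinearity of $f$ in that slot turns the $\partial$ into $-(\lambda+\mu_i+\dots+\mu_{i+m-1})$, leaving exactly $-\lambda$. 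The associativity \eqref{eq:circ-assoc1-conf}-\eqref{eq:circ-assoc2-conf} and equivariance \eqref{eq:circ-equiv-conf} are identities among iterated insertions; since \eqref{20170613:eq2-pro} is the verbatim conformal lift of the insertion formula for $\mc{C}hom(\Pi V)$, they follow from the same index-and-spectral-parameter bookkeeping as in \cite[Sec.5.2]{BDSHK19}, now read in $\Pi V[\lambda,\mu_1,\dots,\mu_{m+n-1}]$ rather than in its quotient by $\langle\partial+\sum_j\mu_j\rangle$.

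Second I would identify the projective system. Since the filtration \eqref{eq:filtrV} is exhaustive, $(\Pi V)^{\otimes n}=\bigcup_q(\Pi F^qV)^{\otimes n}$, so a compatible family of sesquilinear maps $\bar f_q\colon(\Pi F^qV)^{\otimes n}\to\Pi V[\lambda_1,\dots,\lambda_n]$ glues uniquely to a sesquilinear $f$ on $(\Pi V)^{\otimes n}$ and conversely restricts from one; this yields $\widetilde{\mc{C}hom}(\Pi V)(n)=\varprojlim\widetilde{\mc{C}hom}(\Pi V)_q(n)$ in $\mb F[\partial]$-mod-$S_n$, the restriction maps \eqref{eq:chom-proj-system} evidently commuting with $\partial$ and $S_n$.

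The heart of the proof, and the step I expect to be the main obstacle, is the polynomiality condition \eqref{eq:operad1b-top}: the $\lambda$-degree of $(f\circ^i_\lambda g)_\mu(v_1\otimes\dots\otimes v_{m+n-1})$ must be bounded uniformly as the $v_j$ range over the finite-rank module $F^qV$. The plan mirrors Example~\ref{ex:cend}. Fixing $\mb F[\partial]$-generators $u_1,\dots,u_N$ of $F^qV$ and expanding each $v_j$ in them, sesquilinearity of $g$ rewrites the inner value $g_{\mu_i,\dots,\mu_{i+m-1}}(v_i\otimes\dots\otimes v_{i+m-1})$ as an $\mb F[\mu_i,\dots,\mu_{i+m-1}]$-combination of the finitely many basic values $g(u_{k_1}\otimes\dots\otimes u_{k_m})$; expanding those in the $\mu$'s exhibits a \emph{finite} set of vectors $w_\beta\in\Pi V$ that can occupy the $i$-th input of $f$. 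Likewise sesquilinearity of $f$ in the slots $\ne i$ reduces the outer inputs $v_1,\dots,v_{i-1},v_{i+m},\dots$ to the generators $u_k$, at the cost of polynomial factors in the $\mu$'s \emph{only}. Neither reduction introduces $\lambda$, so the outcome is a finite $\mb F[\mu_1,\dots,\mu_{m+n-1}]$-combination of values of $f$ on inputs drawn from the finite set $\{u_{k_1}\otimes\dots\otimes w_\beta\otimes\dots\otimes u_{k_n}\}$, with $\lambda$ entering solely through the substitution of $\lambda+\mu_i+\dots+\mu_{i+m-1}$ into the $i$-th spectral variable of $f$. Hence the $\lambda$-degree is bounded by the largest $i$-th spectral degree of $f$ over this finite set, independently of the $v_j\in F^qV$; this is the uniform bound giving \eqref{eq:operad1b-top}. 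By Remark~\ref{rem:pro-co} the $\circ^i_\lambda$-products then take values in $\varprojlim\big(\widetilde{\mc{C}hom}(\Pi V)_q(m+n-1)[\lambda]\big)$, and together with the structural axioms this establishes that $\widetilde{\mc{C}hom}(\Pi V)$ is a pro-conformal pseudo operad.
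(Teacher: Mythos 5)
Your proof is correct and takes essentially the same route as the paper's: you identify $\widetilde{\mc{C}hom}(\Pi V)(n)=\varprojlim\widetilde{\mc{C}hom}(\Pi V)_q(n)$ from exhaustiveness of the filtration \eqref{eq:filtrV}, obtain the uniform bound on the $\lambda$-degree in \eqref{eq:operad1b-top} by expanding the $v_j$ over a finite set of $\mb F[\partial]$-generators of $F^qV$ and invoking sesquilinearity (your intermediate expansion of the $g$-values into finitely many vectors $w_\beta$ is just a more explicit bookkeeping of the paper's single composite expression on generator tuples), and reduce the remaining sesquilinearity, associativity and equivariance axioms to the ``non-pro'' case of Section \ref{sec:chom}. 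Your explicit verification of \eqref{eq:circ-sesq} is sound, though the paper simply cites the non-pro argument for it.
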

\begin{proof}
Since the filtration \eqref{eq:filtrV} is exhaustive,
it is immediate to check that
$$
\varprojlim \widetilde{\mc{C}hom}(\Pi V)_q(n)
=
\widetilde{\mc{C}hom}(\Pi V)(n) 
\,,
$$
with the restriction maps $\pi_{q}:\,\widetilde{\mc{C}hom}(\Pi V)(n)\to\widetilde{\mc{C}hom}(\Pi V)_q(n)$.
Consider now the composition of $f\circ^i_{\lambda}g$ with the restriction map $\pi_q$.
Namely, we apply this composition to vectors $v_1,\dots,v_{m+n-1}\in F^qV$.
Let $(u_\alpha)_{\alpha\in I}$ be a finite set of generators of the $\mb F[\partial]$-module $F^qV$.
We thus have $v_j=\sum_{\alpha\in I}P_{j,\alpha}(\partial)u_\alpha$ for some polynomials $P_{j,\alpha}(\partial)\in\mb F[\partial]$,
$\alpha\in I,\,j=1,\dots,m+n-1$.
By sesquilinearity, the right-hand side of \eqref{20170613:eq2-pro} becomes
\begin{align*}
& \pm
\sum_{\alpha_1,\dots,\alpha_{m+n-1}\in I}
P_{1,\alpha_1}(-\mu_1)\dots P_{m+n-1,\alpha_{m+n-1}}(-\mu_{m+n-1})
\\
&\qquad f_{\mu_1,\dots,\stackrel{i}{\widecheck{\vphantom{\big(}\lambda}}+\mu_i+\dots+\mu_{i+m-1},\dots,\mu_{m+n-1}}
\big(
u_{\alpha_1}\otimes\dots
\\ & \qquad\qquad
\dots \otimes
\stackrel{i}{\widecheck{\vphantom{\big(}g}}_{\mu_i,\dots,\mu_{i+m-1}}(u_{\alpha_i}\otimes\dots\otimes u_{\alpha_{i+m-1}})
\otimes\dots\otimes u_{\alpha_{m+n-1}}
\big)
\,.
\end{align*}
This is a combination of finitely many polynomials in $\lambda$
whose degrees depend on the generators $u_\alpha,\,\alpha\in I$,
but not on the vectors $v_1,\dots,v_{M_n}$.
As a consequence, the degree in $\lambda$ is uniformly bounded with respect to
the vectors $v_1,\dots,v_{M_n}$, so that
$$
\pi_q(f\circ^i_{\lambda}g))
\,\in\,\widetilde{\mc{C}hom}(\Pi V)_q(m+n-1)[\lambda]
$$
as desired.

The proof of the unity, equivariance and associativity axioms for the 
$\circ^i_\lambda$-products of $\widetilde{\mc{C}hom}(\Pi V)$ 
are the same as in the ``non-pro'' situation, cf. Section \ref{sec:chom}.
\end{proof}

Note that the quotient 
$\widetilde{\mc{C}hom}(\Pi V)/\partial \widetilde{\mc{C}hom}(\Pi V)$
does not coincide in general with $\mc{C}hom(\Pi V)$.
Indeed, we have a canonical map
\begin{equation}\label{eq:quotient-chom}
\widetilde{\mc{C}hom}(\Pi V)/\partial \widetilde{\mc{C}hom}(\Pi V)
\to
\mc{C}hom(\Pi V)
\,,
\end{equation}
defined by composing a map 
$f_{\lambda_1,\dots,\lambda_n}:\,(\Pi V)^{\otimes n}\to\Pi V[\lambda_1,\dots,\lambda_n]$ 
from $\widetilde{\mc{C}hom}(\Pi V)$
with the quotient map 
$\Pi V[\lambda_1,\dots,\lambda_n]\twoheadrightarrow 
\Pi V[\lambda_1,\dots,\lambda_n]/\big\langle\partial+\lambda_1+\dots+\lambda_n\big\rangle$.
And it is easy to check that this map is always injective.
But in general it may not be surjective.
In fact, for $n=0$ it is always bijective,
while for $n\geq1$ is it surjective under the assumption
that the $\mb F[\partial]$-module $V$ is free (cf. \cite[Rem.6.6]{DSK13}).
As a consequence, we cannot apply directly 
Proposition \ref{prop:univ-lie-pro} and Definition \ref{def:univ-lie-pro}
to define the basic cohomology complex attached to a Lie conformal superalgebra $V$.

On the other hand, we have the following
\begin{proposition}\label{prop:basic-cohom}
\begin{enumerate}[(a)]
\item
For an $\mb F[\partial]$-module $V$, we have a canonical injective Lie superalgebra homomorphism
\begin{equation}\label{eq:quotient-chom2}
\widetilde{W}(\widetilde{\mc{C}hom}(\Pi V))/
\partial\widetilde{W}(\widetilde{\mc{C}hom}(\Pi V))
\to
W(\mc{C}hom(\Pi V))
\,,
\end{equation}
induced by the map \eqref{eq:quotient-chom}.
This map 
is always injective, 
in degree $n=-1$ it is always bijective,
while in degree $n\geq0$ it is surjective provided that $V$ is free as $\mb F[\partial]$-module.
\item
We have a representation of the $\mb Z_{\geq-1}$-graded Lie superalgebra $W(\mc{C}hom(\Pi V))$
on $\widetilde{W}(\widetilde{\mc{C}hom}(\Pi V))$,
compatible with the map \eqref{eq:quotient-chom2}.
\end{enumerate}
\end{proposition}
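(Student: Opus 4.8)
The plan is to realize \eqref{eq:quotient-chom2} as a composite of two already-available maps, and to build the representation in (b) by extending the $\Box_\lambda$-products of $\widetilde{\mathcal{C}hom}(\Pi V)$ to ``mixed'' products between $\mathcal{C}hom(\Pi V)$ and $\widetilde{\mathcal{C}hom}(\Pi V)$.

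For (a), write $\mathscr{P}:=\widetilde{\mathcal{C}hom}(\Pi V)/\partial\widetilde{\mathcal{C}hom}(\Pi V)$, so that \eqref{eq:quotient-chom} is an injective morphism of linear pseudo operads $\iota\colon\mathscr{P}\hookrightarrow\mathcal{C}hom(\Pi V)$. Then \eqref{eq:quotient-chom2} is the composite $\widetilde W(\widetilde{\mathcal{C}hom}(\Pi V))/\partial\widetilde W(\widetilde{\mathcal{C}hom}(\Pi V))\xrightarrow{\kappa}W(\mathscr{P})\xrightarrow{W(\iota)}W(\mathcal{C}hom(\Pi V))$, where $\kappa$ is the canonical map of Proposition \ref{prop:univ-lie-pro} and $W(\iota)$ is induced by $\iota$ on invariants. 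First I would check that $\partial$ is injective on $\widetilde{\mathcal{C}hom}(\Pi V)(k)$ for every $k\geq1$: if $f\neq0$ and $d$ is the top total degree in the $\lambda_i$ appearing in some value $f_{\lambda_1,\dots,\lambda_k}(v)$, then $(\partial+\lambda_1+\dots+\lambda_k)f$ has nonzero degree-$(d+1)$ part, since multiplication by $\lambda_1+\dots+\lambda_k$ is injective on $\Pi V[\lambda_1,\dots,\lambda_k]$ for $k\geq1$. By Proposition \ref{prop:univ-lie-pro} this makes $\kappa$ an isomorphism in every degree $\geq0$, while the remaining degree $-1$ is covered by the exceptional case of that proposition (trivial symmetric group); hence $\kappa$ is an isomorphism of graded Lie superalgebras. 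Injectivity of $W(\iota)$ follows from injectivity of $\iota$ (taking $S_k$-invariants preserves injections), so \eqref{eq:quotient-chom2} is injective. For surjectivity I would invoke the already-recorded properties of $\iota$, namely that it is bijective in arity $0$ and, for free $V$, surjective in arity $\geq1$, together with the exactness of the invariants functor $(-)^{S_k}$ in characteristic $0$: this gives that $W(\iota)$ is bijective in degree $-1$ and surjective in degree $\geq0$ when $V$ is free, which combined with $\kappa$ yields exactly the asserted behaviour of \eqref{eq:quotient-chom2}.

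For (b), the action of $W(\mathscr{P})\cong\widetilde W/\partial\widetilde W$ on $\widetilde W$ from Proposition \ref{prop:univ-lie-pro} is $\bar a\cdot g=a\Box_0 g-(-1)^{p(a)p(g)}g\Box_{-\partial}a$, and the point is that each term depends only on the image of a lift $a$ in $\mathcal{C}hom(\Pi V)$. I would isolate as a lemma that both $\circ^1$-products involved descend to $\mathcal{C}hom(\Pi V)$ in their first (resp.\ second) argument: insertion at spectral parameter $0$ (the product $g\circ^1_0 F$ occurring in $g\Box_{-\partial}F$) is insensitive to the representative because, by the sesquilinearity \eqref{eq:circ-sesq} of $g$, the representative-ambiguity is proportional to that parameter; and the product $F\circ^1_{-\partial}g$ occurring in $F\Box_0 g$, which inserts $g$ into the first slot of $F$ with that slot's parameter specialized to the value $-\partial_V-\sum(\text{remaining parameters})$ forced by the relation $\langle\partial+\sum\lambda\rangle$ defining $\mathcal{C}hom(\Pi V)$, factors through that very relation and so is well defined and honestly polynomial valued. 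I would then define, for $F\in W_m(\mathcal{C}hom(\Pi V))$ and $g\in\widetilde W_n$, the mixed products $F\Box_0 g:=\sum_{\sigma}(F\circ^1_{-\partial}g)^{\sigma^{-1}}$ and $g\Box_{-\partial}F:=\sum_{\sigma}(g\circ^1_0 F)^{\sigma^{-1}}$ as in \eqref{eq:box-conf}, set $F\cdot g:=F\Box_0 g-(-1)^{p(F)p(g)}g\Box_{-\partial}F\in\widetilde W_{m+n}$, and verify that (i) the result is $S_{m+n+1}$-invariant and polynomial valued; (ii) for $F$ in the image of \eqref{eq:quotient-chom2} it reduces to the formula \eqref{eq:W-action}, giving the asserted compatibility; and (iii) $F\mapsto(g\mapsto F\cdot g)$ is a Lie superalgebra homomorphism. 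For (iii) I would rerun the associator computation in the proof of Theorem \ref{20170603:thm2-conf}(b): by Remark \ref{rem:reimundo} that identity is a purely formal consequence of sesquilinearity, associativity and equivariance of the $\circ^i_\lambda$-products, so the same manipulation applies verbatim in the mixed setting.

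The main obstacle is the well-definedness of the term $F\Box_0 g$ for an arbitrary $F\in\mathcal{C}hom(\Pi V)$, including those outside the image of \eqref{eq:quotient-chom} (which exist precisely when $V$ is not free): such an $F$ admits no lift to $\widetilde{\mathcal{C}hom}(\Pi V)$, so the operation ``insert the honest cochain $g$ into the quotient-valued $F$'' must be produced directly, and the fact that it lands in honest polynomials rests entirely on the compatibility between the specialization $\lambda=-\partial$ and the defining relation of $\mathcal{C}hom(\Pi V)$. Once this mixed left-insertion is shown to be well defined, the remaining verifications — invariance, compatibility, and the mixed Jacobi identity — are formal and parallel the non-mixed arguments of Theorem \ref{20170603:thm2-conf} and Proposition \ref{prop:univ-lie-pro}.
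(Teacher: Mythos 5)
Your proposal is correct, but there is little to compare it with line by line: the paper's own ``proof'' of this proposition is a single sentence deferring both statements to \cite[Prop.6.5 and Rem.6.6]{DSK13}, so you have in effect supplied the argument that the paper outsources. For part (a), your factorization of \eqref{eq:quotient-chom2} as $\kappa$ (from Proposition \ref{prop:univ-lie-pro}) followed by $W(\iota)$ is a clean organization: the top-degree argument for injectivity of $\partial$ on $\widetilde{\mc{C}hom}(\Pi V)(k)$, $k\geq1$, is exactly right, since $\Pi V[\lambda_1,\dots,\lambda_k]$ is torsion-free over $\mb F[\lambda_1,\dots,\lambda_k]$, and averaging over the finite groups $S_k$ in characteristic zero does transfer the arity-wise injectivity/surjectivity of \eqref{eq:quotient-chom} to the spaces of invariants; you do tacitly use that \eqref{eq:quotient-chom} is a morphism of pseudo operads, which is true but deserves a one-line check against \eqref{20170613:eq2} and \eqref{20170613:eq2-pro}. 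For part (b), you correctly isolate the only genuinely nontrivial point, namely that when $V$ has torsion an element $F\in W(\mc{C}hom(\Pi V))$ need not lift, so the mixed insertions must be defined directly, and your mechanism is the right one: in $F\circ^1_{-\partial}g$ the first parameter of $F$ is specialized so that the total of $F$'s parameters equals $-\partial$, annihilating the defining relation $\langle\partial+\lambda_1+\dots+\lambda_{m+1}\rangle$, while in $g\circ^1_0F$ the quotient-valued output of $F$ enters the first slot of $g$ at parameter $\Lambda=\mu_1+\dots+\mu_{m+1}$, and it is the \emph{argument}-sesquilinearity \eqref{20170613:eq1} of $g$ (not the operadic sesquilinearity \eqref{eq:circ-sesq} you cite, which only covers the ambiguity of a liftable $F$) that kills representatives of the form $(\partial+\Lambda)w$. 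Your identifications $F\Box_0 g=\sum_\sigma(F\circ^1_{-\partial}g)^{\sigma^{-1}}$ and $g\Box_{-\partial}F=\sum_\sigma(g\circ^1_0F)^{\sigma^{-1}}$ agree with \eqref{eq:box-conf} and \eqref{eq:W-action} (one checks that $(-\mu-\partial)^k$ vanishes at $\mu=-\partial$ for $k\geq1$), and they have the added merit of bypassing all projective-limit issues, since after specialization each insertion is a finite expression. The one soft spot is your step (iii): invoking Remark \ref{rem:reimundo} to rerun the associator computation of Theorem \ref{20170603:thm2-conf}(b) verbatim presupposes mixed analogues of the sesquilinearity, associativity and equivariance identities for insertions involving one quotient-valued factor; these do hold, because the insertion formulas are literally the same and the specializations are consistent, but they are precisely the ``straightforward'' content the paper delegates to \cite{DSK13}, so in a complete write-up you should record them as a separate lemma rather than assert them.
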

\begin{proof}
The proof of both statements is straightforward. It and can be found in \cite[Prop.6.5 and Rem.6.6]{DSK13}.
\end{proof}
\begin{definition}\label{def:basic-lca-coohom}
Let $V$ be a Lie conformal superalgebra
and consider the odd element 
$X\in W_1(\mc{C}hom(\Pi V))$ such that $X\Box X=0$
associated to the $\lambda$-bracket on $V$ by Proposition \ref{20170612:prop2}(a).
Its action on $\widetilde{W}(\widetilde{\mc{C}hom}(\Pi V))$
given by Proposition \ref{prop:basic-cohom}(b) defines a cohomology complex,
which we call 
the \emph{basic Lie conformal superalgebra cohomology complex}
of $V$ (with coefficients in $V$).
\end{definition}
\begin{remark}\label{rem:basic-lca}
For a Lie conformal superalgebra which is finite rank as an $\mb F[\partial]$-module,
this definition coincides with the basic Lie conformal algebra cohomology complex
introduced in \cite{BKV99}.
\end{remark}

%%%
\subsection{The chiral pro-conformal operad}\label{rec:chiral-conf}

As in the previous subsection, $V$ is a vector superspace over $\mb F$ 
with a given even endomorphism $\partial\in\End(V)$,
and we assume that $V$ has as increasing exhaustive filtration \eqref{eq:filtrV}
by finite rank $\mb F[\partial]$-submodules compatible with the $\mb Z/2\mb Z$-grading of $V$.

Recall, from Section \ref{sec:chiral}, that $\mc O_{n}^{\star T}$ denotes the algebra of translation invariant polynomials
in $n$ variables $z_1,\dots,z_n$, localized on the diagonals $z_i-z_j$ for $i\neq j$, cf. \eqref{eq:Ostarn}.
The \emph{Chiral pro-conformal operad} $\widetilde{\mc{P}}^{\ch}(\Pi V)$ is defined as 
the collection of superspaces
$\widetilde{\mc{P}}^{\ch}(\Pi V)(n)$, $n\geq0$,
consisting of all linear maps
$$
f_{\lambda_1,\dots,\lambda_n}\colon (\Pi V)^{\otimes n}\otimes\mc O_{n}^{\star T} \to
\Pi V[\lambda_1,\dots,\lambda_n]
\,,
$$
satisfying the same sesquilinearity conditions as in \eqref{20160629:eq4},
except that now both sides are considered in $\Pi V[\lambda_1,\dots,\lambda_n]$
(and not in its quotient by $\langle\partial+\lambda_1\dots+\lambda_n\rangle$).
In particular, $\widetilde{\mc{P}}^{\ch}(\Pi V)(0)=\Pi V$
and $\widetilde{\mc{P}}^{\ch}(\Pi V)(1)=\RCend(V)$.
We define an $\mb F[\partial]$-module structure on $\widetilde{\mc{P}}^{\ch}(\Pi V)(n)$
by the same equation as \eqref{eq:dchom}, where now $f$ lies in $\widetilde{\mc{P}}^{\ch}(\Pi V)(n)$.
%
%The unity of the Chiral pro-conformal operad $\widetilde{\mc{P}}^{\ch}(\Pi V)$ is the same as \eqref{eq:1chom},
%since $\widetilde{\mc{P}}^{\ch}(\Pi V)(1)=\widetilde{\mc{C}hom}(\Pi V)(1)=\RCend(V)$.
%
The right action of $\sigma\in S_n$ on $\widetilde{\mc{P}}^{\ch}(\Pi V)(n)$
is given by \eqref{20160629:eq5},
where now both sides of the equality are considered in $\Pi V[\lambda_1,\dots,\lambda_n]$.

As for the Chom pro-conformal operad,
in order to define $\circ^i_\lambda$-products we need to realize $\widetilde{\mc{P}}^{\ch}(\Pi V)(n)$
as projective limit of a projective system.
Indeed, to the filtration \eqref{eq:filtrV}
we associate the projective systems $\widetilde{\mc{P}}^{\ch}(\Pi V)_q(n)$, $q\geq0$,
defined as the spaces of maps 
$$
\bar f_{\lambda_1,\dots,\lambda_n}\colon (\Pi F^qV)^{\otimes n}\otimes\mc O_{n}^{\star T}
\to
\Pi V[\lambda_1,\dots,\lambda_n]
\,,
$$
satisfying the same sesquilinearity conditions \eqref{20160629:eq4}.
The maps 
\begin{equation}\label{eq:Pch-proj-system}
\pi_{q+1,q}:\,\widetilde{\mc{P}}^{\ch}(\Pi V)_{q+1}(n)\to\widetilde{\mc{P}}^{\ch}(\Pi V)_q(n),\,q\geq0\,,
\end{equation}
are given by restrictions.
Readily, $\widetilde{\mc{P}}^{\ch}(\Pi V)(n)=\varprojlim \widetilde{\mc{P}}^{\ch}(\Pi V)_q(n)$.

Then, the $\circ^i_\lambda$-product of $f\in\widetilde{\mc{P}}^{\ch}(\Pi V)(n)$
and $g\in\widetilde{\mc{P}}^{\ch}(\Pi V)(m)$ 
is defined by
\begin{equation}\label{20170613:eq2-pro2}
\begin{split}
& \big(f\circ^i_{\lambda}g\big)_{\mu_1,\dots,\mu_{m+n-1}}^{z_1,\dots,z_{m+n-1}}
\big(v_1\otimes\dots\otimes v_{m+n-1}\otimes p(z_1,\dots,z_{m+n-1})\big) \\
& :=
\pm
f_{\mu_1,\dots,\stackrel{i}{\widecheck{\vphantom{\big(}\lambda}}+\mu_i+\dots+\mu_{i+m-1},\dots,\mu_{m+n-1}}^{
z_1,\dots z_{i-1},\stackrel{i}{\widecheck{\vphantom{\big(}Z}},z_{i+m},\dots,z_{m+n-1}
}
\Big( v_1\otimes\dots
\\
&\dots\otimes
\stackrel{i}{\widecheck{\vphantom{\big(}\phantom{g}}}\!\!\!\!\!\!
g_{\mu_i-\partial_{z_i},\dots,\mu_{i+m-1}-\partial_{z_{i+m-1}}}^{z_i,\dots,z_{i+m-1}}
(v_i\otimes\dots\otimes v_{i+m-1}\otimes q(z_i\otimes\dots\otimes z_{i+m-1}))_\to
\otimes\dots
\\
&\dots\otimes v_{m+n-1}
\otimes
h(z_1,\dots,z_{m+n-1})\big|_{z_i=\dots=z_{m+i-1}=Z}
\big)
\,,
\end{split}
\end{equation}
where the sign $\pm$ is the same as in \eqref{20170613:eq2-pro}
and $q\in\mc O^{\star,T}_m$ and $h\in\mc O^{\star,T}_{m+n-1}$ are such that
$p(z_1,\dots,z_{m+n-1})
=
q(z_i,\dots,z_{i+m-1})
h(z_1,\dots,z_{m+n-1})$,
and $h$ has no poles at $z_k=z_l$ such that $i\leq k<l\leq i+m-1$.
As in \eqref{circ5}, in the right-hand side the arrows means that 
we first take the partial derivatives of $h$ and then we make the substitutions 
$z_{i}=\dots=z_{i+m-1}=Z$.

As for the Chom pro-conformal operad,
even though both sides of \eqref{20170613:eq2-pro2}
lie in the space of polynomials
$\Pi V[\lambda,\mu_1,\dots,\mu_{m+n-1}]$
for every $v_1\otimes\dots\otimes v_{m+n-1}\in (\Pi V)^{\otimes m+n-1}$,
equation \eqref{20170613:eq2-pro2} 
defines in general a formal powers series valued $\circ^i_\lambda$-product:
$$
f\circ^i_{\lambda}g
\,\in\,\widetilde{\mc{P}}^{\ch}(\Pi V)(m+n-1)[[\lambda]]
\,,
$$
since the degree in the variable $\lambda$ of the polynomials in the right-hand side
of \eqref{20170613:eq2-pro2} may not be uniformly bounded with respect to the choice of the vectors 
$v_1,\dots,v_{m+n-1}$.

\begin{proposition}\label{prop:Pch-pro}
The supserspaces $\widetilde{\mc{P}}^{\ch}(\Pi V)(n)$, $n\geq0$,
form a pro-conformal pseudo operad,
with the $\mb F[\partial]$-module structure defined by \eqref{eq:dchom},
%the unit element \eqref{eq:1chom},
the action of $S_n$ defined by \eqref{20160629:eq5}, 
the $\circ^i_\lambda$-products defined by \eqref{20170613:eq2-pro2},
and the projective systems \eqref{eq:Pch-proj-system}.
\end{proposition}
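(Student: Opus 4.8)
The plan is to follow, almost verbatim, the structure of the proof of Proposition~\ref{prop:chom-pro} for $\widetilde{\mc Chom}(\Pi V)$; the only genuinely new ingredient is the presence of the localized algebra $\mc O^{\star T}_n$ and the more involved composition formula \eqref{20170613:eq2-pro2}. First I would record that $\widetilde{\mc P}^{\ch}(\Pi V)(n)=\varprojlim_q\widetilde{\mc P}^{\ch}(\Pi V)_q(n)$: since the filtration \eqref{eq:filtrV} is exhaustive, a linear map out of $(\Pi V)^{\otimes n}\otimes\mc O^{\star T}_n$ is precisely a compatible family of its restrictions to the subspaces $(\Pi F^qV)^{\otimes n}\otimes\mc O^{\star T}_n$, and the restriction maps \eqref{eq:Pch-proj-system} are exactly the structure maps of a projective system in $\mb F[\partial]$-mod-$S_n$. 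This is immediate and is already asserted in the text preceding the statement. The $\mb F[\partial]$-module structure \eqref{eq:dchom}, the $S_n$-action \eqref{20160629:eq5}, and the fact that they preserve parity and commute are formal and carry over unchanged from the non-pro setting.

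The heart of the matter is to verify condition \eqref{eq:operad1b-top}, namely that $\pi_q(f\circ^i_\lambda g)$ lies in the polynomial space $\widetilde{\mc P}^{\ch}(\Pi V)_q(m+n-1)[\lambda]$ and not merely in the formal power series space. Here I would argue exactly as in Proposition~\ref{prop:chom-pro}: fix $q$, choose a finite generating set $(u_\alpha)_{\alpha\in I}$ of the finite-rank $\mb F[\partial]$-module $F^qV$, and write each input vector as $v_j=\sum_{\alpha}P_{j,\alpha}(\partial)u_\alpha$. Using the first sesquilinearity condition of \eqref{20160629:eq4} --- which now reads $f(\partial_i v\otimes p)=-\lambda_i f(v\otimes p)+f(v\otimes\partial p/\partial z_i)$ --- I would pull each polynomial $P_{j,\alpha}(\partial)$ out of its slot, producing polynomial coefficients in the $\mu$'s together with finitely many $z$-derivatives of the localized functions $q$ and $h$ appearing in \eqref{20170613:eq2-pro2}; these $z$-derivatives remain inside $\mc O^{\star T}$, and the degrees of the extracted polynomials are bounded by $\max_{j,\alpha}\deg P_{j,\alpha}$, hence uniformly over all $v_1,\dots,v_{m+n-1}\in F^qV$. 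This reduces $\pi_q(f\circ^i_\lambda g)$ to a finite $\mb F[\mu]$-combination of compositions evaluated on the generators, whose $\lambda$-degree is controlled by the fixed data $f$, $g$ and $(u_\alpha)$, so that the degree in $\lambda$ is uniformly bounded and the product is genuinely polynomial in $\lambda$ after projection.

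Finally, for the sesquilinearity \eqref{eq:circ-sesq}, the equivariance \eqref{eq:circ-equiv-conf}, and the associativity \eqref{eq:circ-assoc1-conf}--\eqref{eq:circ-assoc2-conf} of the $\circ^i_\lambda$-products, I would observe that these are identities between compositions for fixed inputs and a fixed $p$, so they hold in $\Pi V[\lambda,\mu_1,\dots,\mu_{m+n-1}]$ exactly as they do for the non-pro chiral operad $\mc P^{\ch}(\Pi V)$ of Section~\ref{sec:chiral}; the verification is the one carried out in \cite[Prop.6.7]{BDSHK19}, the only change being that both sides are now read in the polynomial ring $\Pi V[\lambda_1,\dots,\lambda_n]$ rather than in its quotient by $\langle\partial+\lambda_1+\dots+\lambda_n\rangle$, which affects none of the manipulations. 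The step I expect to be the main obstacle is the uniform degree bound of the second paragraph: it is the only place where the finite-rank hypothesis on the filtration \eqref{eq:filtrV} is essential, and where the interplay of the two sesquilinearity conditions with the factorization $p=qh$ and the diagonal substitution $z_i=\dots=z_{i+m-1}=Z$ must be controlled carefully in order to conclude that projection to each $\widetilde{\mc P}^{\ch}(\Pi V)_q(m+n-1)$ kills the tail of the power series in $\lambda$.
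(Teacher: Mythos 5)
Your proposal is correct and takes essentially the same approach as the paper, whose entire proof of Proposition \ref{prop:Pch-pro} is the single line ``Same as for Proposition \ref{prop:chom-pro}'': you simply spell out that transported argument, namely the identification $\widetilde{\mc{P}}^{\ch}(\Pi V)(n)=\varprojlim\widetilde{\mc{P}}^{\ch}(\Pi V)_q(n)$ over the filtration \eqref{eq:filtrV}, the reduction via sesquilinearity to finitely many $\mb F[\partial]$-generators of $F^qV$ to obtain the $\lambda$-degree bound required by \eqref{eq:operad1b-top}, and the verification of \eqref{eq:circ-sesq}, \eqref{eq:circ-assoc1-conf}--\eqref{eq:circ-assoc2-conf} and \eqref{eq:circ-equiv-conf} exactly as in the non-pro chiral operad of \cite[Prop.6.7]{BDSHK19}. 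Your closing remark rightly singles out the one genuinely new feature of the chiral case --- how the degree bound interacts with the factorization $p=qh$, the diagonal substitution, and the unfiltered $\mc O^{\star T}$-slot --- a point the paper's one-line proof leaves entirely implicit.
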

\begin{proof}
Same as for Proposition \ref{prop:chom-pro}.
\end{proof}

The ``chiral'' version of Proposition \ref{prop:basic-cohom} holds as well:
\begin{proposition}\label{prop:basic-ch-cohom}
Let $V$ be an $\mb F[\partial]$-module.
\begin{enumerate}[(a)]
\item
We have a canonical injective map 
\begin{equation}\label{eq:quotient-ch-chom}
\widetilde{\mc{P}}^{\ch}(\Pi V)(n)/
\partial\widetilde{\mc{P}}^{\ch}(\Pi V)(n)
\to
\mc{P}^{\ch}(\Pi V)(n)
\,,
\end{equation}
which is bijective for $n=0$ while, for $n\geq1$, is surjective provided that $V$ is free as an $\mb F[\partial]$-module.
\item
We have a canonical injective Lie superalgebra homomorphism
\begin{equation}\label{eq:quotient-ch-chom2}
\widetilde{W}(\widetilde{\mc{P}}^{\ch}(\Pi V))/
\partial\widetilde{W}(\widetilde{\mc{P}}^{\ch}(\Pi V))
\to
W(\mc{P}^{\ch}(\Pi V))
\,,
\end{equation}
induced by the map \eqref{eq:quotient-ch-chom}.
This map 
is always injective, 
in degree $n\geq0$ it is surjective provided that $V$ is free as $\mb F[\partial]$-module,
while in degree $n=-1$ it is bijective for arbitrary module $V$.
\item
We have a representation of the $\mb Z_{\geq-1}$-graded Lie superalgebra $W(\mc{P}^{\ch}(\Pi V))$
on $\widetilde{W}(\widetilde{\mc{P}^{\ch}}(\Pi V))$,
compatible with the map \eqref{eq:quotient-ch-chom2}.
\end{enumerate}
\end{proposition}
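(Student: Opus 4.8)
The plan is to mirror the ``non-pro'' development, reducing the statement to Proposition \ref{prop:univ-lie-pro} together with the chiral-specific claim (a), exactly as Proposition \ref{prop:basic-cohom} handled the Chom case. First I would prove (a). The map \eqref{eq:quotient-ch-chom} is $f_{\lambda_1,\dots,\lambda_n}$ post-composed with the projection $\Pi V[\lambda_1,\dots,\lambda_n]\twoheadrightarrow\Pi V[\lambda_1,\dots,\lambda_n]/\langle\partial+\lambda_1+\dots+\lambda_n\rangle$, whose kernel is the image of multiplication by $\partial+\lambda_1+\dots+\lambda_n$; by \eqref{eq:dchom} this is exactly the operator $\partial$ on $\widetilde{\mc{P}}^{\ch}(\Pi V)(n)$. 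Since multiplication by $\partial+\lambda_1+\dots+\lambda_n$ is injective on $\Pi V[\lambda_1,\dots,\lambda_n]$ (as $\lambda_1+\dots+\lambda_n$ is a nonzerodivisor), a map $f$ lies in the kernel of \eqref{eq:quotient-ch-chom} iff $f=\partial g$ for a unique $g$, and $g$ inherits the sesquilinearity conditions \eqref{20160629:eq4}; this gives injectivity. For $n=0$ the operator \eqref{eq:dchom} reduces to $\partial_V$ and both sides equal $\Pi V/\partial\Pi V$, so the map is the identity. For surjectivity when $n\geq1$ and $V$ is free, I would fix a homogeneous $\mb F[\partial]$-basis and build a sesquilinear preimage on basis vectors: the first condition in \eqref{20160629:eq4} dictates the dependence on $\partial_i$, the second the dependence on $z_i-z_j$, and freeness removes the torsion obstruction to prescribing these values, transcribing \cite[Rem.6.6]{DSK13}.

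For (b) I would pass to $S_n$-invariants. By Lemma \ref{lem:fd-sn} invariants commute with the projective limit, and by the $\alpha,\beta$ argument in the proof of Proposition \ref{prop:lin-conf}(c) the map $\widetilde{W}_{n-1}/\partial\widetilde{W}_{n-1}\to(\widetilde{\mc{P}}^{\ch}(n)/\partial\widetilde{\mc{P}}^{\ch}(n))^{S_n}$ is an isomorphism; composing with the invariants of (a) produces \eqref{eq:quotient-ch-chom2}. It is a homomorphism of $\mb Z_{\geq-1}$-graded Lie superalgebras because the $\Box$-product \eqref{eq:box} defining the bracket on $W(\mc{P}^{\ch})$ is the specialization at $\lambda=0$ of the $\Box_\lambda$-product \eqref{eq:box-conf}. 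The injectivity, the surjectivity for $n\geq0$ under freeness, and the bijectivity for $n=-1$ (which corresponds to $\widetilde{\mc{P}}^{\ch}(0)=\Pi V$, i.e. the identity on $\Pi V/\partial\Pi V$) all follow degree by degree from (a).

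For (c), Proposition \ref{prop:univ-lie-pro} already yields an action of $W(\widetilde{\mc{P}}^{\ch}/\partial\widetilde{\mc{P}}^{\ch})$ on $\widetilde{W}(\widetilde{\mc{P}}^{\ch})$ via \eqref{eq:W-action}. To upgrade this to an action of the full $W(\mc{P}^{\ch})$ I would note that both terms $\tilde f\Box_0 g$ and $g\Box_{-\partial}\tilde f$ involve $\tilde f$ only through $\circ^1_\lambda$-products specialized at $\lambda=0$ and $\lambda=-\partial$. Inspecting \eqref{20170613:eq2-pro2}, these specializations evaluate $\tilde f$ precisely at spectral arguments whose total weight is $-\partial$: the coefficient-wise $\partial$ of \eqref{eq:dchom} equals $\partial+\sum_j\mu_j$, and the inserted weight $-\partial+\mu_i+\dots+\mu_{i+m-1}$ cancels the remaining $\mu_j$'s. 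Hence these quantities depend only on the class of $f$ modulo $\langle\partial+\lambda_1+\dots+\lambda_n\rangle$, i.e. only on $\bar f\in\mc{P}^{\ch}(\Pi V)$, and require no sesquilinear lift of $\bar f$ into $\Pi V[\lambda_1,\dots,\lambda_n]$; this simultaneously shows that \eqref{eq:W-action} descends modulo $\partial\widetilde{\mc{P}}^{\ch}$ and that it extends to every element of $W(\mc{P}^{\ch})$, compatibly with \eqref{eq:quotient-ch-chom2}. The representation identity is then the same symmetry-of-the-associator computation as in Theorem \ref{20170603:thm2-conf}(b) and Proposition \ref{prop:lin-conf}(c).

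The hard part will be the bookkeeping specific to the chiral composition \eqref{20170613:eq2-pro2}: its variables $z_i\in\mc{O}^{\star T}_n$, the factorization $p=qh$, and the substitutions $z_i=\dots=Z$ must be checked to be compatible at each step with the quotient map and with the free lifting, and in particular the ``total spectral weight $-\partial$'' identity underlying (c) must be seen to survive the $z$-derivatives carried by $h$. I expect the surjective lifting in (a) for free $V$ and the well-definedness of the extended $W(\mc{P}^{\ch})$-action in (c) to be the two points demanding the most care, even though each runs parallel to an argument already established for $\widetilde{\mc{C}hom}$.
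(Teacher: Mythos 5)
Your overall architecture is consistent with the paper's: injectivity of \eqref{eq:quotient-ch-chom} via the fact that $\partial+\lambda_1+\dots+\lambda_n$ is injective on $\Pi V[\lambda_1,\dots,\lambda_n]$ (the paper only asserts this; your argument is a complete version of it), the identification at $n=0$, passage to $S_n$-invariants for (b), and the total-spectral-weight-$(-\partial)$ cancellation for (c), which is in fact more detailed than the paper's one-line dismissal of (c) as straightforward. The genuine gap is exactly at the point you defer as ``the hard part'': the surjectivity lift in (a) for $n\geq2$ when $V$ is free. Your plan --- prescribe values on an $\mb F[\partial]$-basis, with ``the second condition dictating the dependence on $z_i-z_j$'', transcribing \cite[Rem.6.6]{DSK13} --- does not constitute an argument. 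The Chom setting has no $z$-variables and no second sesquilinearity condition, so nothing transcribes; and the second condition in \eqref{20160629:eq4} cannot be used to extend freely chosen values: multiplication by $z_i-z_j$ is \emph{invertible} on $\mc O^{\star T}_n$, whereas $\frac{\partial}{\partial\lambda_j}-\frac{\partial}{\partial\lambda_i}$ is far from injective on $\Pi V[\lambda_1,\dots,\lambda_n]$, so the values of a polynomial lift at $p$ and at $(z_i-z_j)^{\pm1}p$ constrain each other through a non-invertible operator, and for $n\geq3$ the constraints for the various pairs $(i,j)$ interlock across all of $\mc O^{\star T}_n$. An arbitrary pointwise choice of representatives of $f$ on basis vectors will violate these identities; freeness of $V$ only disposes of the obstruction attached to the \emph{first} sesquilinearity condition.

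The paper's missing idea is a canonical, condition-preserving choice of representatives: for each $\ell$ let $\iota_\ell$ be the section of the quotient obtained by substituting $\lambda_\ell$ by $-\partial-\sum_{k\neq\ell}\lambda_k$, and define $\widetilde f$ on free generators by the symmetrized formula \eqref{20240524:eq1}, $\widetilde f=\frac1n\sum_{\ell}\iota_\ell\circ f$. Since every $\iota_\ell$ commutes with $\frac{\partial}{\partial\lambda_j}-\frac{\partial}{\partial\lambda_i}$ (these operators annihilate $\partial+\lambda_1+\dots+\lambda_n$), the second sesquilinearity condition descends automatically from the quotient to the polynomial lift on generators, and one then extends to all of $V^{\otimes n}$ by the first condition as in \eqref{20240524:eq2}. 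Note also that the averaging over $\ell$ is not cosmetic: it makes the lift $S_n$-equivariant, which is precisely how the paper obtains surjectivity on invariants in (b); your alternative route via the $\alpha,\beta$ maps of Proposition \ref{prop:lin-conf}(c) is viable but then needs an explicit char-$0$ averaging over $S_n$ to pass surjectivity of (a) to invariants, which you should state. Without the explicit lift, your (b) and (c) for free $V$ have nothing to stand on, so supplying this construction (or an equivalent device) is the essential repair.
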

\begin{proof}
(a) For $n=0$ and $1$ the chiral operad coincides with the Chom operad,
hence \eqref{eq:quotient-ch-chom} reduces to \eqref{eq:quotient-chom},
which is always an isomorphism if $n=0$, 
and it is an isomorphism for a free $\mb F[\partial]$-module if $n=1$.
If $n\geq2$, we prove surjectivity of \eqref{eq:quotient-ch-chom} provided that $V=\mb F[\partial]U$
is free.
Let $f_{\lambda_1,\dots,\lambda_n}:\, (\Pi V)^{\otimes n}\otimes\mc O^{\star,T}_n\to (\Pi V)[\lambda_1,\dots,\lambda_n]$
be an element of ${\mathscr P}^{\ch}(\Pi V)(n)$.
We construct a preimage $\widetilde{f}\in\widetilde{{\mathscr P}}^{\ch}(\Pi V)(n)$ as follows.
Let $u_1,u_2,\dots$ be a basis of the vector superspace $U$. We let
\begin{equation}\label{20240524:eq1}
\widetilde{f}_{\lambda_1,\dots,\lambda_n}(u_{i_1}\otimes\dots\otimes u_{i_n}\otimes p)
=\frac1n\sum_{\ell=1}^n
\iota_\ell\big(
f_{\lambda_1,\dots,\lambda_n}(u_{i_1}\otimes\dots\otimes u_{i_n}\otimes p)
\big)
\,,
\end{equation}
for every $p\in\mc O^{\star,T}_n$, where $\iota_\ell$ is the map
$$
\iota_\ell:\,
(\Pi V)[\lambda_1,\dots,\lambda_n]/\langle\partial+\lambda_1+\dots+\lambda_n\rangle
\simeq
(\Pi V)[\lambda_1,
\stackrel{\ell}{\check{\dots}}
,\lambda_n]
\subset
(\Pi V)[\lambda_1,\dots,\lambda_n]
\,,
$$
and the first isomorphism is obtained by substituting $\lambda_\ell$ by 
$-\partial-\lambda_1-\stackrel{\ell}{\check{\dots}}-\lambda_n$.
It is straightforward to check that, since $f$ satisfies the sesquilinearity conditions \eqref{20160629:eq4} 
in $\Pi V[\lambda_1,\dots,\lambda_n]\big/\big\langle\partial+\lambda_1+\dots+\lambda_n\big\rangle$,
then $\widetilde{f}$, evaluated on generators in $U^{\otimes n}$, 
satisfies the second sesquilinearity condition in $\Pi V[\lambda_1,\dots,\lambda_n]$:
$$
\widetilde{f}_{\lambda_1,\dots,\lambda_n}(u_{i_1}\otimes\dots\otimes u_{i_n}\otimes z_{ij}p)
=
\Bigl(\frac{\partial}{\partial\lambda_j}-\frac{\partial}{\partial\lambda_i}\Bigr)
f_{\lambda_1,\dots,\lambda_n}(u_{i_1}\otimes\dots\otimes u_{i_n}\otimes p)
\,.
$$
This is because, for each $i,j$ and $\ell$, 
the operator $\frac{\partial}{\partial\lambda_j}-\frac{\partial}{\partial\lambda_i}$
commutes with the map $\iota_\ell$.
Then, we extend $\widetilde{f}$ inductively (on the powers of $\partial$) to $V^{\otimes n}\otimes\mc O^{\star,T}_n$
by applying the first sesquilinearity condition.
Explicitly, we let
\begin{equation}\label{20240524:eq2}
\begin{split}
& \widetilde{f}_{\lambda_1,\dots,\lambda_n}
(h_1(\partial)u_{i_1}\otimes\dots\otimes h_n(\partial)u_{i_n}\otimes p) \\
& =
\widetilde{f}_{\lambda_1,\dots,\lambda_n}
\Big(u_{i_1}\otimes\dots\otimes u_{i_n}
\otimes h_1\big(-\lambda_1+\frac{\partial}{\partial z_1}\big)\dots h_n\big(-\lambda_n+\frac{\partial}{\partial z_n}\big)
p\Big) 
\end{split}
\end{equation}
for every $p\in\mc O^{\star,T}_n$
and every polynomials $h_1(\partial),\dots,h_n(\partial)\in\mb F[\partial]$.
It is clear that such $\widetilde{f}$ is uniquely defined by the above formula.
One can check that it satisfies both sesquilinearity conditions \eqref{20160629:eq4}
and that, under the map \eqref{eq:quotient-ch-chom} it maps back to $f$.

For part (b) we just need to observe that the map \eqref{eq:quotient-ch-chom} defined by
\eqref{20240524:eq1} and \eqref{20240524:eq2}
commutes with the right action of the symmetric group $S_n$.
Claim (c) is also straightforward.
\end{proof}
\begin{example}\label{ex:V=C}
When $V$ is not free as a $\mb F[\partial]$-module, the map \eqref{eq:quotient-ch-chom}
does not need to be surjective.
As an example, consider the case when $V=\mb F$, the one dimensional space,
considered as a $\mb F[\partial]$-module with $\partial=0$.
In this case $\widetilde{\mc{P}}^{\ch}(n)=0$ for every $n\geq1$.
Indeed, an element $\tilde f\in\widetilde{\mc{P}}^{\ch}(n)$
is a map $\tilde f_{\lambda_1,\dots,\lambda_n}:\,\mc O^{\star,T}_n\to\mb F[\lambda_1,\dots,\lambda_n]$
satisfying the sesquilinearity conditions \eqref{20160629:eq4}:
\begin{equation}\label{20240523:eq1}
\begin{split}
& \lambda_i\tilde f_{\lambda_1,\dots,\lambda_n}(p)
=
\tilde f_{\lambda_1,\dots,\lambda_n}\Bigl(\frac{\partial p}{\partial z_i}\Bigr)
\,, \\
& \tilde f_{\lambda_1,\dots,\lambda_n}(z_{ij}p)
=
\Bigl(\frac{\partial}{\partial\lambda_j}-\frac{\partial}{\partial\lambda_i}\Bigr)
\tilde f_{\lambda_1,\dots,\lambda_n}(p)
\,,
\end{split}
\end{equation}
for every $p\in\mc O^{\star,T}_n$.
Summing the first such condition over $i$, we get
$$
(\lambda_1+\dots+\lambda_n)
\tilde f_{\lambda_1,\dots,\lambda_n}(p)=0
\,,
$$
which obviously implies $\tilde f=0$.
On the other hand, 
${\mc{P}}^{\ch}(n)$
consists of maps 
$$
f_{\lambda_1,\dots,\lambda_{n-1}}:\,\mc O^{\star,T}_n
\to\mb F[\lambda_1,\dots,\lambda_n]/(\lambda_1+\dots+\lambda_n)
\simeq\mb F[\lambda_1,\dots,\lambda_{n-1}]
$$
satisfying the same sesquilinearity conditions \eqref{20240523:eq1}.
In particular, ${\mathscr P}^{\ch}(1)=\mb F$,
and it is not hard to check that also ${\mathscr P}^{\ch}(2)$ is one dimensional,
spanned by the element defined by
$$
f_\lambda(z^n)
=\left\{
\begin{array}{ll}
0 & \text{ if } n\geq0\,, \\
\frac{(-\lambda)^{-n-1}}{(-n-1)!} &\text{ if } n\leq-1\,.
\end{array}
\right.
$$
\end{example}
\begin{definition}\label{def:basic-va-coohom}
Let $V$ be a vertex algebra
and consider the odd elements 
$X\in W_1(\mc{P}^{\ch}(\Pi V))$ such that $X\Box X=0$
associated to the vertex algebra structure of $V$ by Proposition \ref{20170612:prop2-ch}.
Its action on $\widetilde{W}(\widetilde{\mc{P}}^{\ch}(\Pi V))$
given by Proposition \ref{prop:basic-ch-cohom}(c) defines a cohomology complex,
which we call 
the \emph{basic vertex algebra cohomology complex}
of $V$ (with coefficients in $V$).
\end{definition}
For an arbitrary $V$-module $M$ the definition is extended as explained in Remark \ref{rem:27}.
Here is an explicit description of this complex:
$$
\big(\widetilde{\mc C}^{\ch}(V,M),\tilde d_X\big)
=
\Big(
\sum_{n\geq0}\widetilde{\mc C}^n(V,M),\tilde d_X\big)
\,,
$$
where
$\widetilde{\mc C}^n(V,M)$ is the space of linear maps
$$
\tilde f_{\lambda_1,\dots,\lambda_n}^{z_1,\dots,z_n}
\colon (\Pi V)^{\otimes n}\otimes\mc O_{n}^{\star T}
\to
\Pi M[\lambda_1,\dots,\lambda_n]
\,,
$$
satisfying the two sesquilinearity conditions \eqref{20160629:eq4},
and the symmetry condition, see \eqref{20160629:eq5}:
$\tilde{f}^\sigma=\tilde f$.
The differential $\tilde d_X$ is given by \cite[Eq.(7.6)]{BDSHK19},
or in a different form in \cite[Eq.(4.12)]{BDSK21}.
\begin{example}\label{ex:ciao}
We have $\widetilde{\mc C}^0(V,M)=M$.
Then, for $m\in M$, $\tilde d_Xm:\,V\to M[\lambda]$ is defined by
$$
(\tilde d_Xm)_\lambda v
=
-(-1)^{(1+p(v))p(m)}
v_{-\lambda-\partial}m
\,.
$$
In particular, for $M=V$ we have $(\tilde d_Xm)_\lambda v
=(-1)^{p(m)}m_\lambda v$.
Hence, 
$$
\widetilde{H}^0(V,V)
=
\big\{v\in V\,\big|\,
[V_\lambda v]=0\big\}
\,,
$$
is the center of $V$.
\end{example}
\begin{example}
Let $\bar R=\mb F[\partial]\otimes\mf g$ and $R=\bar R\oplus\mb FC$ be Lie conformal algebras,
where $\mf g$ is a simple Lie algebra with a symmetric invariant bilinear form $(\cdot\,,\,\cdot)$,
with $\lambda$-brackets, respectively ($a,b\in\bar R$)
$$
[a_\lambda b]=[a,b]
\,,\qquad
[a_\lambda b]=[a,b]
+\lambda(a,b)C
\,,\,\,
C\text{ central}.
$$
Let $\mc V(\bar R)$ and $\mc V(R)$ be the corresponding Poisson vertex algebras
and, for $k\in\mb F$, let $V^k(R)=V(R)/(C-k)$, be the universal affine vertex algebra of level $k$.
By \cite[Prop.2.11(c) and Thm.4.10]{BDSHK20} we have
$$
H^n_{\text{PV}}(\mc V(\bar R),\mc V(\bar R))\simeq H^n_{\mc Chom}(\bar R,\mb F)
\,,\qquad
\text{ for } n=0,1\,.
$$
Consequently, by \cite[Thm.6.12]{BDSK21}, we have
$$
H^n_{\ch}(V^k(\mf g),V^k(\mf g))
\simeq
H^n_{\mc Chom}(\bar R,\mb F)
\,,\qquad
\text{ for } n=0,1\,.
$$
Hence, by \cite[Thm.4.10]{BDSK20}, we obtain
$$
H^0_{\ch}(V^k(\mf g),V^k(\mf g))=\mb F\vac
\,,\qquad
H^1_{\ch}(V^k(\mf g),V^k(\mf g))=0
\,.
$$
Consequently all Casimirs of $V^k(\mf g)$ are trivial and all derivations of $V^k(\mf g)$ are inner.
\end{example}

%%%%%%%%%%%%%%%%%%%%%%%%%%%%%%%%
\section{Operads in pseudo tensor categories}\label{sec:7}

In this section we generalize the constructions of the previous sections to the
general context of pseudo tensor categories. 
%
%In particular, we give an explicit linear algebra rendition of part of Tamarkin's work \cite{Tam02}. 

%%%
\subsection{Pseudo Tensor Categories}\label{sec:6.1}

For a general study of pseudo tensor categories we refer the readers to
\cite[Sec.1.1]{BD04} or \cite[Sec.3]{BDAK01}, these are also known in the literature
as \emph{multicategories} \cite{Lam69},
or as \emph{colored operads} \cite{Yau16}. As in the previous sections, 
all categories and operads are $\mb F$-linear.

\begin{definition}\label{defn:pseudo-tensor}
A \emph{pseudo tensor category} consists of the following data:
\begin{enumerate}
\item a class ${\mc C}$ of objects;
\item for $n\geq0$ and a collection $\left\{ L_1,\cdots,L_n,L \right\}$ of $n+1$ objects in ${\mc C}$, an $\mb F$-vector space ${\mc C}(L_1,\cdots,L_n;L)$;
\item for $n\geq1$ and a permutation $\sigma \in S_n$, an isomorphism of vector spaces 
\begin{equation}\label{equation1b} 
{\mc C}(L_1,\cdots,L_n; L) \stackrel{\sim}{\longrightarrow} 
{\mc C}(L_{\sigma(1)},\cdots,L_{\sigma(n)}, L)
\,, \qquad 
\varphi \mapsto \varphi^{\sigma}\,;
\end{equation}
\item for any object $L$ of ${\mc C}$ a vector $\id_L \in {\mc C}(L;L)$;
\item for a collection $\left\{ m_1,\cdots,m_n \right\} \subset \mathbb{Z}_{\geq1}$, 
denote $M_0=0$ and  $M_i = \sum_{j=1}^{i} m_j$ for $1 \leq i \leq n$. 
Let $K_j$, $1 \leq j \leq M_n$ be objects. We have composition maps 
\begin{equation}\label{equation1} 
{\mc C}(L_1,\cdots,L_n;L) \otimes \left( \bigotimes_{i=1}^n {\mc C} \left( K_{M_{i-1}+1},\cdots,K_{M_i};L_i \right)\right) 
\rightarrow
{\mc C} \left( K_{1}, \cdots, K_{M_n} ; L\right),
\end{equation}
denoted $\psi \otimes \left( \otimes_{i=1}^n \varphi_{i} \right) \mapsto \psi(\varphi_1 \otimes\ldots \otimes \varphi_n)$. 
\end{enumerate}
These data are subject to the following axioms:
\begin{enumerate}[(i)]
\item
The isomorphisms $\varphi \mapsto \varphi^{\sigma}$ in \eqref{equation1b}
define a right linear action of $S_n$, in the sense that the following associativity holds: 
$(\varphi^\sigma)^\tau=\varphi^{\sigma\tau}$.
\item 
The composition maps \eqref{equation1} are associative in the sense that,
for every $\psi\in{\mc C}(L_1,\cdots,L_n;L)$, 
for $\varphi_i\in{\mc C} ( K_{M_{i-1}+1},\cdots,K_{M_i};L_i ),\,i=1,\dots,n$, 
and for $\pi_j\in{\mc C} ( H_{\sum_{k=1}^{j-1}\ell_k+1},\cdots,H_{\sum_{k=1}^{j}\ell_k};K_j )
,\,j=1,\dots,M_n$, we have
\begin{align*}
& \big(\psi(\varphi_1\otimes\dots\otimes\varphi_n)\big)(\pi_1\otimes\dots\otimes\pi_{M_n}) \\
& \qquad =
\psi\big(\varphi_1(\pi_1\otimes\dots\otimes\pi_{M_1})
\otimes\dots\otimes
\varphi_n(\pi_{M_{n-1}+1}\otimes\dots\otimes\pi_{M_n})
\big)\,.
\end{align*}
\item 
The composition map \eqref{equation1} are compatible 
with the action of the symmetric groups \eqref{equation1b} as follows. 
For $\sigma \in S_n$ and $\tau_i \in S_{m_i}$, $i=1,\dots,n$, we have 
$$
\psi^\sigma \left( \varphi_{\sigma(1)}^{\tau_{\sigma(1)}} 
\otimes \cdots \otimes 
\varphi_{\sigma(n)}^{\tau_{\sigma(n)}} \right) 
= 
\left( \psi(\varphi_1 \otimes \cdots \otimes \varphi_n) \right)^{\sigma(\tau_1,\dots,\tau_n)}\,,
$$
where $\sigma(\tau_1,\dots,\tau_n)$ is the composition of permutations defined by \eqref{eq:operad19}.
\item 
The elements $\id_L\in\mc C(L,L)$ act as the identities under the composition, in the sense that,
for $\varphi\in{\mc C}(L_1,\cdots,L_n;L)$, 
$$
\varphi(\id_{L_1}\otimes\dots\otimes\id_{L_n})
=
\id_L(\varphi)=\varphi
\,.
$$
\end{enumerate}
\end{definition}

\begin{remark}\label{rem:operad-1}
A particular case is when $L_1 = L_2 = \cdots = L_n=L$.
In this case 
the collection of vector spaces 
${\mathscr P}(n):={\mc C}(L, \cdots, L;L)$, $n\geq0$, is automatically a linear symmetric operad,
as defined in Section \ref{sec:2.1}.
Indeed, ${\mc C}(L, \cdots, L;L)$ is a right $S_n$-module with the action given by \eqref{equation1b},
and the axioms (ii), (iii), (iv) restrict, in this case, to the axioms \eqref{eq:operad2}, \eqref{eq:operad4}
and \eqref{eq:operad3} of the compositions in a unital operad.
In particular, a (linear symmetric) operad is the same as a pseudo tensor category with a single object.
\end{remark}

\begin{remark}\label{rem:augmentation}
Our Definition \ref{defn:pseudo-tensor} in fact corresponds 
to what Beilinson and Drinfeld call in \cite{BD04} an \emph{augmented} pseudo tensor category,
as we are including the case $n=0$.
\end{remark}

\begin{example}\label{ex:Hom}
A symmetric monoidal category ${\mc C}$ is a pseudo tensor category with 
$$
{\mc C} (L_1,\cdots,L_n;L) := \Hom_{{\mc C}} ( L_1 \otimes\cdots \otimes L_n, L
), \qquad n > 0,
$$
and 
\[ 
{\mc C}(\left\{  \right\}; L) := \Hom_{ {\mc C}} (\id, L),
\]
where $\id$ is the unit object of ${\mc C}$. Given a vector space $V$, we then recover, by Remark \ref{rem:operad-1},
the $\mc Hom$-operad of Section \ref{sec:Hom}:
$$
(\mc Hom\,V)(n)=\mc C(V,\dots,V;V)
\,,\qquad n\geq0\,.
$$
\end{example}
\begin{example}\label{ex:cHom}
The category of $\mb F[\partial]$-modules has a pseudo tensor structure, 
that we call the \emph{conformal pseudo tensor structure} ${\mc C}^*$, where 
$$
{\mc C}^*(L_1,\cdots,L_n; L) \subset 
\Hom_{\mb F}\big( L_1 \otimes\cdots \otimes L_n, 
L[\lambda_1,\cdots,\lambda_n]/\langle \partial+ \lambda_1 + \cdots + \lambda_n  \rangle \big)
\,,
$$
consists of maps 
$$
l_1 \otimes \cdots \otimes l_n \mapsto X_{\lambda_1, \ldots,\lambda_n} (l_1 \otimes \cdots \otimes l_n)
\,,
$$
satisfying the following sesquilinearity condition for all $i=1,\dots,n$:
\begin{equation} \label{eq:sesqui1}
X_{\lambda_1,\ldots,\lambda_n} (l_1  \otimes  \cdots  \otimes \partial l_i \otimes \cdots  \otimes  l_n) 
= - \lambda_i X_{\lambda_1, \ldots,\lambda_n} (l_1  \otimes  \cdots  \otimes  l_n)
\,. 
\end{equation}
For $n=0$ the empty tensor product is, by convention, the basis field $\mb F$, so
$\mc C^*(\left\{  \right\};L)=L/\partial L$.
The action of the symmetric group $S_n$ in \eqref{equation1b} is given as follows. 
For $X \in {\mc C}^*(L_1,\cdots,L_n;L)$ and $\sigma \in S_n$ we have
\begin{equation} \label{eq:symmetric*}
(X^\sigma)_{\lambda_1,\ldots,\lambda_n} (l_1\otimes \cdots\otimes l_{n}) 
:= X_{\lambda_{\sigma^{-1}(1)},\ldots,\lambda_{\sigma^{-1}(n)} } 
(l_{\sigma^{-1}(1)}\otimes\cdots\otimes l_{\sigma^{-1}(n)}).
\end{equation}
Composition is defined as follows. Let $m_1,\ldots,m_n \in \mathbb{Z}_{\geq0}$ 
and define $M_0=0$ and  $M_i = \sum_{j=1}^{i} m_j$ for $1 \leq i \leq n$.  
Let $\Lambda_i= \sum_{j=M_{i-1}+1}^{M_i} \lambda_j$ for $1 \leq i \leq n$. 
For $X \in {\mc C}^*(L_1,\ldots,L_n;L)$ 
and $Y_i \in {\mc C}^*(K_{M_{i-1}+1},\ldots,K_{M_{i}};L_i)$, we let 
$X(Y_1 \otimes \cdots \otimes Y_n) \in {\mc C}^*(K_1,\cdots,K_{M_n}; L)$ be defined by
\begin{equation}\label{eq:composition*}
\begin{split}
& \big(X(Y_1 \otimes \cdots  \otimes  Y_n) \big)_{\lambda_1,\ldots,\lambda_{M_n}} 
( k_1  \otimes  \cdots  \otimes  k_{M_n} ) \\
& =   
X_{\Lambda_1,\ldots,\Lambda_n} \big( 
(Y_1)_{\lambda_1,\ldots,\lambda_{m_1}}(k_1  \otimes  \cdots  \otimes  k_{m_1})  
\otimes  \cdots  \\
&\qquad\qquad \dots\otimes
(Y_n)_{\lambda_{M_{n-1}+1},\ldots,\lambda_{M_n}}
(k_{M_{n-1}+1} \otimes  \ldots  \otimes  k_{M_n} ) \big).
\end{split}
\end{equation}
Compare the above formulas \eqref{eq:sesqui1}, \eqref{eq:symmetric*} and \eqref{eq:composition*},
with \eqref{20170613:eq1}, \eqref{20170613:eq3} and \eqref{20170613:eq2} respectively.
As a result, for a given $\mb F[\partial]$-module $V$,
we recover, by Remark \ref{rem:operad-1}, the $\mc Chom$ operad of Section \ref{sec:chom}:
$$
(\mc Chom\,V)(n)=\mc C^*(V,\dots,V;V)
\,,\quad n\geq0
\,.
$$
\end{example}
\begin{example}\label{ex:ch}
We next take the same class of objects of $\mb F[\partial]$-modules, but with a different
pseudo tensor structure, obtaining the \emph{vertex pseudo tensor category} $\mc C^\ch$. 
We let ${\mc O}^{\star T}_n$ be as in \eqref{eq:Ostarn}.
%and we let ${\mc D}^T_n = \mb F[z_i-z_j,\,1\leq i<j\leq n][\partial_{z_1},\cdots, \partial_{z_n}]$
%be the algebra of translation invariant differential operators. 
We then set 
\begin{align*}
& {\mc C}^{ch}(L_1, \cdots,L_n; L) \\
&\quad 
\subset \Hom
\big( L_1 \otimes\cdots \otimes L_n \otimes {\mc O}^{\star T}_{n}, 
L[\lambda_1,\cdots,\lambda_n]/ \langle \partial+ \lambda_1 + \cdots \lambda_n \rangle \big) 
\end{align*}
be the subspace consisting of maps 
$$
l_1 \otimes \cdots \otimes l_n \otimes f(z_1,\dots,z_n)
\mapsto 
X_{\lambda_1,\ldots,\lambda_n}^{z_1,\dots,z_n} ( l_1\otimes \cdots\otimes l_n\otimes f )
\,,
$$
(as for the chiral operad, when not necessary we remove the upper indices $z_1,\dots,z_n$)
satisfying the following sesquilinearity conditions
\begin{equation} \label{eq:sesqui2} 
\begin{split}
& X_{\lambda_1,\ldots,\lambda_n} 
\big( l_1\otimes  \cdots \otimes(\partial + \lambda_i) l_i \otimes  \cdots \otimes l_n \otimes f \big) 
= 
X_{\lambda_1,\ldots,\lambda_n} 
\Big( l_1\otimes\cdots\otimes l_n\otimes \frac{\partial f}{\partial z_i} \Big)\,, \\
& X_{\lambda_1,\ldots,\lambda_n} 
\big( l_1\otimes\cdots\otimes l_n\otimes (z_i-z_j)f \big) 
=
\Big( \frac{\partial}{\partial \lambda_j} - \frac{\partial}{\partial \lambda_i} \Big) 
X_{\lambda_1,\ldots,\lambda_n} ( l_1\otimes\cdots\otimes l_n\otimes f )\,. 
\end{split}
\end{equation}
The action of $S_n$ in \eqref{equation1b} is given as follows. 
For $X \in {\mc C}^{ch}(L_1,\cdots,L_n;L)$ and $\sigma \in S_n$ we have
\begin{equation} \label{eq:symmetric-ch} 
\begin{split}
& (X^\sigma)^{z_1,\ldots,z_n}_{\lambda_1,\ldots,\lambda_n} 
\big(l_1\otimes \cdots\otimes l_{n}\otimes f(z_1,\cdots,z_n)\bigr) \\
& := X^{z_1, \ldots, z_n}_{\lambda_{\sigma^{-1}(1)},\ldots,\lambda_{\sigma^{-1}(n)} } 
\big( l_{\sigma^{-1}(1)}\otimes\cdots\otimes l_{\sigma^{-1}(n)}\otimes 
f(z_{\sigma^{-1}(1)},\cdots,z_{\sigma^{-1}(n)}) \big).
\end{split}
\end{equation}
Furthermore, compositions are defined as follows. 
Let $X \in {\mc C}^{ch}(L_1,\cdots,L_n;L)$ 
and $Y_i \in {\mc C}^{ch}(K_{M_{i-1}+1},\cdots,K_{M_i}; L_i)$ 
for $1 \leq i \leq n$ and $m_1,\cdots,m_n \in \mathbb{Z}_{\geq0}$ as above. 
Recall that a function $p(z_1,\dots,z_n) \in {\mc O}^{\star T}_{M_n}$ can be decomposed
as in \eqref{circ4}.
We then define, using the same notation as in \eqref{eq:composition*},
\begin{equation}\label{eq:composition-ch}
\begin{split}
& \big(X ( Y_1 \otimes \ldots \otimes Y_n ) \big)_{\lambda_1,\ldots,\lambda_{M_n}}^{z_1,\ldots,z_{M_n}} 
\big( k_1\otimes\ldots\otimes k_{M_n}\otimes p(z_1,\dots,z_{M_n}) \big) \\
& =
X_{\Lambda_1,\ldots,\Lambda_n}^{z_{M_1},\ldots,z_{M_n}} 
\Big( 
(Y_1)_{\lambda_1 - \partial_{z_1},\ldots,\lambda_{m_1} - \partial_{z_{m_1}}}^{z_1,\ldots,z_{m_1}}
\big(k_1 \otimes\ldots\otimes k_{m_1}\otimes q_1(z_1,\dots,z_{m_1})\big)_{\rightarrow} \otimes \ldots \\ 
& \dots\otimes\!
(Y_n)_{\lambda_{M_{n\!-\!1}\!+\!1} \!-\! \partial_{z_{M_{n\!-\!1}\!+\!1}},\dots, 
\lambda_{M_n} \!\!-\!\partial_{z_{M_n}}}^{z_{M_{n\!-\!1}\!+\!1},\ldots, z_{M_n}} 
\!\big(
k_{M_{n\!-\!1}\!+\!1}\otimes \dots\otimes k_{M_n} \!\otimes q_n(z_{M_{n\!-\!1}\!+\!1},\dots,z_{M_n})
\big)_{\rightarrow} \\
&\qquad\qquad
\otimes h(z_1,\dots,z_{M_n})|_{z_{M_{i -1}+1} = \ldots = z_{M_i}\,\forall i}  
\Big)\,,
\end{split}
\end{equation}
where the arrow $\rightarrow$ means that we apply the derivatives $\partial_{z_j}$ 
to $h$ before setting $z_{M_{i -1}+1} = \ldots = z_{M_i}$ for all $i$.
Compare the above formulas \eqref{eq:sesqui2}, \eqref{eq:symmetric-ch} and \eqref{eq:composition-ch},
with \eqref{20160629:eq4}, \eqref{20160629:eq5} and \eqref{circ5} respectively.
As a result, for a given $\mb F[\partial]$-module $V$,
we recover, by Remark \ref{rem:operad-1}, the chiral operad ${\mathscr P}$ of Section \ref{sec:chiral}:
$$
({\mathscr P}^\ch V)(n)=\mc C^\ch(V,\dots,V;V)
\,,\quad n\geq0
\,.
$$
\end{example}
\begin{example}\label{ex:cl}
Again, we consider the class of objects of $\mb F[\partial]$-modules,
and we associate to it yet another structure of pseudo tensor category,
which we call the \emph{classical pseudo tensor category} $\mc C^{\cl}$.
In order to define it, let $\mc G(n)$, for $n\geq0$,
be the set of all oriented graphs on the set of vertices $\{1,\dots,n\}$ with no tadpoles.
Then we let ${\mc C}^{cl}( L_1,\cdots,L_n;L )$ be the space of all maps 
$$
f: \mathcal{G}(n) \times L_1 \otimes \cdots \otimes L_n \rightarrow 
L[\lambda_1,\cdots,\lambda_n] /\langle \partial+ \lambda_1 + \cdots + \lambda_n \rangle
$$
denoted
$$
(\Gamma,l_1\otimes\dots\otimes l_n)
\mapsto 
f^\Gamma_{\lambda_1,\dots,\lambda_n}(l_1\otimes\dots\otimes l_n)
\,,
$$
which are linear in the $L_i$ factors and 
satisfy the cycle relations \cite[(10.4)]{BDSHK19} 
and the sesquilinearity condition \cite[(10.6)-(10.7)]{BDSHK19}.
The right action of $S_n$ in \eqref{equation1b} is given as follows. 
For $X \in {\mc C}^{cl}(L_1,\cdots,L_n;L)$, 
$\Gamma \in \mathcal{G}(n)$, and $\sigma \in S_n$, we have
\begin{equation} \label{eq:symmetric-cl}
(f^\sigma)^{\Gamma}_{\lambda_1,\ldots,\lambda_n} (l_1\otimes\cdots\otimes l_{n}) 
:= 
f^{\sigma^{-1}\Gamma}_{\lambda_{\sigma^{-1}(1)},\ldots,\lambda_{\sigma^{-1}(n)} } 
(l_{\sigma^{-1}(1)}\otimes\cdots\otimes l_{\sigma^{-1}(n)}),
\end{equation} 
where $\sigma^{-1}\Gamma$ is the graph obtained from $\Gamma$ by relabeling the vertex $i$  to have label $\sigma^{-1}(i)$. 
Composition is defined as follows. For a graph $\Gamma \in \mathcal{G}(M_n)$ 
we consider its cocomposition $\Delta^{m_1\ldots m_n}(\Gamma)$ defined in \cite[9.1]{BDSHK19}. 
In particular we have graphs $\Delta_0(\Gamma) \in \mathcal{G}(n)$ 
and $\Delta_i(\Gamma) \in \mathcal{G}(m_i)$, for $1 \leq i \leq n$. 
Let $\Lambda_i = \sum_{j=M_{i-1}+1}^{M_i} \lambda_j$, $i=1,\cdots,n$. 
Recall from \cite[(9.5)]{BDSHK19} that for a set of variables $x_1,\cdots,x_n$ we let 
$$
X(k) = X(\Gamma,m_1,\cdots,m_n;k) = \sum_{j \in \mathcal{E}(k)} x_j,
$$
the sum being over the set of all $j \in \left\{ 1,\cdots,n \right\}$ externally connected to the vertex $k$. 
Then, for $f \in {\mc C}^{cl}(L_1,\cdots,L_n;L)$ 
and $g^i \in {\mc C}^{cl}(K_{M_{i-1}+1},\cdots,K_{M_i}; L_i)$, $1 \leq i \leq n$, 
we define their composition $f (g^1\otimes \cdots\otimes g^n) \in {\mc C}^{cl}(K_1,\cdots,K_{M_n}; L)$ 
by (we use the same notation as in \eqref{eq:composition*})
\begin{equation}\label{eq:lambda-circ-i-first-cl-b}
\begin{split}
& (f (g^1\otimes\ldots\otimes g^n))^{\Gamma}_{\lambda_1,\ldots,\lambda_{M_n}} 
(k_1 \otimes \cdots \otimes k_{M_n}) \\
& =
f^{\Delta_0(\Gamma)}_{\Lambda_1,\ldots,\Lambda_n} 
\Big(   
\Big|_{x_1 = \Lambda_1 + \partial} 
(g^1)^{\Delta_1(\Gamma)}_{\lambda_1 + X(1),\ldots,\lambda_{m_1} + X(m_1)} 
(k_1 \otimes \ldots \otimes k_{m_1})
\otimes\dots \\
&\quad \dots\otimes
\Big|_{x_n = \Lambda_n + \partial} 
(g^n)^{\Delta_n(\Gamma)}_{\lambda_{M_{n-1}+1} + X(M_{n-1}+1),\ldots,\lambda_{M_n} + X(M_n)} (k_{M_{n-1}+1}\otimes\ldots\otimes k_{M_n}) 
\Big)
\,. 
\end{split}
\end{equation}
In this case, for a given $\mb F[\partial]$-module $V$,
we recover, by Remark \ref{rem:operad-1}, the classical operad ${\mathscr P}^\cl$ 
of \cite{BDSHK19}:
$$
({\mathscr P}^\cl V)(n)=\mc C^\cl(V,\dots,V;V)
\,,\quad n\geq0
\,.
$$
\end{example}
\begin{example}\label{ex:hopf}
Let $H$ be a cocommutative bialgebra.
Then the category $\mc C^H$ of $H$-modules has a pseudo tensor structure 
$$
\mc C^H(L_1,\dots,L_n;L)
=
\Hom_{H^{\otimes n}}(L_1\otimes\dots\otimes L_n,H^{\otimes n}\otimes_H L)
\,,
$$
with the natural action of $S_n$ and composition maps generalizing \eqref{eq:composition*} 
\cite{BD04,BDAK01}.
\end{example}

Notice that (see \cite[Lem.1.2.6]{BD04}), 
having included the case $n=0$ in Definition \ref{defn:pseudo-tensor}(2),
gives us a functor 
$$
h:\,\mc C\to\Vect_{\mb F}\,,\quad \text{ mapping } L\mapsto h(L)=\mc C(\left\{
 \right\};L)\,,
$$
called the \emph{augmentation functor},
and, for $i=1,\dots,n$, maps
\begin{equation} \label{eq:augmentation} 
\begin{split}
& h_i:\,
h(L_i)\otimes {\mc C}(L_1,\cdots,L_n;L) 
\rightarrow 
{\mc C}(L_1,\stackrel{i}{\check{\dots}},L_n;L)\,, \\
& \xi\otimes\varphi
\mapsto
h_i(\xi\otimes\varphi)=\varphi(\id_{L_1}\otimes\dots\otimes
\stackrel{i}{\widehat{\xi}}\otimes\dots\otimes\id_{L_n})
\,,
\end{split}
\end{equation}
called the \emph{augmentation maps}.
As usual, $\stackrel{i}{\check{\dots}}$ means that the $i$-th term is skipped
and $\stackrel{i}{\widehat{\xi}}$ means insertion in position $i$.

%\begin{example}
%If ${\mc C}$ is a symmetric monoidal category then 
%$h(M) := \Hom_{{\mc C}}(\id, M)$ 
%is an augmentation functor, where $\id$ is the unit for the $\otimes$-structure. 
%Notice that in many cases this functor is just a forgetful functor 
%(for example $G-mod \rightarrow \mathrm{Vect}_{\mb F}$).
%\end{example}
\begin{example}
For the conformal, chiral and classical pseudo tensor categories 
${\mc C}^*$, ${\mc C}^{ch}$ and ${\mc C}^{cl}$ of Examples \ref{ex:cHom}, \ref{ex:ch} and \ref{ex:cl},
the augmentation functor is given by $h(L) = L/\partial L$. 
For example, for the conformal pseudo tensor category $\mc C^*$,
given $\phi \in {\mc C}^{*}(L_1,..,L_n;L)$ and $\bar{l_i} \in L_i/\partial L_i=h(L_i)$, 
we let $l_i \in L_i$ be any pre-image, and the corresponding augmentation map \eqref{eq:augmentation} 
is defined by
$$
\big(h_i(\bar l_i\otimes\phi)\big)_{\lambda_1,\stackrel{i}{\check{\dots}},\lambda_n}
(l_1,\stackrel{i}{\check{\dots}},l_n)
:= 
\phi_{\lambda_1,\dots,\stackrel{i}{\widehat{0}},\dots,\lambda_n}(l_1,\cdots,l_n)\,,
$$
where $\stackrel{i}{\widehat{0}}$ means that the variable $\lambda_i$ is set equal to $0$.
$h_i(\bar l_i\otimes\phi)$ is indeed well defined 
(i.e. it is independent of the choice of $l_i\in L_i$) 
since by sesquilinearity we have 
$\phi_{\lambda_1,\dots,\stackrel{i}{\widehat{0}},\dots,\lambda_n}(l_1,\cdots,\partial l_i,\cdots,l_n) = 0$.
The ${\mc C}^{ch}$ and ${\mc C}^{cl}$ cases are treated similarly.
\end{example}

\begin{definition}\label{def:functor}
Let ${\mc C}, {\mc P}$ be two pseudo tensor categories. A \emph{pseudo tensor functor} 
$F: {\mc P} \rightarrow {\mc C}$ is an assignment of an object $F(M)$ of ${\mc C}$ 
for any object $M$ of ${\mc P}$, together with linear maps
$$
F:\, {\mc P} ( M_1,\cdots,M_n;L ) 
\rightarrow 
{\mc C} ( F(M_1),\cdots,F(M_n);F(L) )
\,,
$$
such that $F(\id_{M}) = \id_{F(M)}$ and $F$ is compatible with the composition maps. 
In the particular case when ${\mc P}$ is an operad, 
namely a pseudo tensor category with a single object $*$,
$F(*)$ is called a ${\mc P}$-\emph{algebra in} ${\mc C}$. 
\end{definition}

\begin{remark}\label{rem:algebra}
Let ${\mc C}$ be a pseudo tensor category. 
Then the augmentation map $h:\,\mc C\to\Vect_{\mb F}$ is a pseudo tensor functor 
(where $\mathrm{Vect}_{\mb F}$ has its usual tensor structure $\otimes_{\mb F}$). 
If ${\mathscr P}$ is an operad, and $F: {\mathscr P} \rightarrow {\mc C}$ 
is a ${\mathscr P}$-algebra in ${\mc C}$, 
then by composition we have that $h\circ F$ is a ${\mathscr P}$-algebra in $\mathrm{Vect}_{\mb F}$.
\end{remark}

As a special case of Definition \ref{def:functor}, consider the operad $\mc Lie$,
defined by letting $\mc Lie(1)=\mb F1$,
$\mc Lie(2)=\mb F\beta$ such that $\beta^{(1,2)}=-\beta$,
and letting all other spaces $\mc Lie(n)$ generated by composing this element $\beta$,
subject to the Jacobi identity 
$$
\beta(1\otimes\beta)=\beta(\beta\otimes 1)+(\beta(1\otimes\beta))^{(1,2)}\,.
$$
Then, by definition, a $\mc Lie$-\emph{algebra in} $\mc C$
is a pseudo tensor functor $\mc Lie\to\mc C$.
Concretely, it is one object $L\in\mc C$ and en element 
$[\cdot\,,\,\cdot]\in\mc C(L,L;L)$ satisfying skew-symmetry 
$$
[\cdot\,,\,\cdot]^{(1,2)}=-[\cdot\,,\,\cdot]\,,
$$
and Jacobi identity
$$
[\cdot\,,\,\cdot](\id_L\otimes[\cdot\,,\,\cdot]) = 
[\cdot\,,\,\cdot]([\cdot\,,\,\cdot]\otimes \id_L)
+ 
([\cdot\,,\,\cdot](\id_L\otimes[\cdot\,,\,\cdot]))^{(1,2)}\,.
$$
\begin{example}\label{ex:Lie1}
In particular, a $\mc Lie$-algebra in $Vect_{\mb F}$, viewed as a pseudo tensor category,
is a usual Lie algebra over $\mb F$.
\end{example}
\begin{example}\label{ex:Lie2}
A $\mc Lie$-algebra in the conformal pseudo tensor category $\mc C^*$ of Example \ref{ex:cHom}
is the same as a Lie conformal algebra. 
A $\mc Lie$-algebra in the pseudo tensor category $\mc C^H$ of Example \ref{ex:hopf}
is called a Lie pseudoalgebra in \cite{BDAK01}.
Simple Lie pseudoalgebras of finite rank over $H$ where classified there.
\end{example}
\begin{example}\label{ex:Lie3}
A $\mc Lie$-algebra in the chiral pseudo tensor category $\mc C^\ch$ of Example \ref{ex:ch}
is the same as a non unital vertex algebra. 
\end{example}
\begin{example}\label{ex:Lie4}
A $\mc Lie$-algebra in the classical pseudo tensor category $\mc C^\cl$ of Example \ref{ex:cl}
is the same as a non unital Poisson vertex algebra. 
\end{example}

\begin{remark}
By Remark \ref{rem:algebra},
given a $\mc Lie$-algebra in an augmented pseudo tensor category $\mc C$,
if we compose the functor $\mc Lie\to\mc C$ with the augmentation functor $h:\,\mc C\to Vect_{\mb F}$,
we obtain a (usual) Lie algebra over $\mb F$.
\end{remark}
\begin{example}\label{ex:lie-alg}
In the particular cases of Examples \ref{ex:Lie2}, \ref{ex:Lie3} and \ref{ex:Lie4},
we recover the facts that, if $L$ is a Lie conformal algebra, a (non unital) vertex algebra 
or a (non unital) Poisson vertex algebra,
then $h(L)=L/\partial L$ is canonically a Lie algebra over $\mb F$.
\end{example}

In what follows we will need the following particular case of the general construction in \cite[1.2.8]{BD04}. 
Let ${\mc C}$ be a pseudo tensor category, $L$ be a $\mc Lie$-algebra in ${\mc C}$ 
and let $h(L)$ be the corresponding Lie algebra over $\mb F$ defined in Remark \ref{rem:algebra}. 
Then $L$ is naturally a $h(L)$-\emph{module}, namely, there exists a canonical map 
$$
\ad : h(L) \rightarrow {\mc C}(L, L), \qquad l \mapsto ad_l\,,
$$
satisfying the usual identity:
$$
\ad_{[l_1,l_2]} = \ad_{l_1} \circ \ad_{l_2} - \ad_{l_2} \circ \ad_{l_1}\,.
$$
The map $\ad_l$ is defined as follows. If we let $[\cdot\,,\,\cdot] \in {\mc C}(L,L;L)$ be the image 
of the Lie bracket in $\beta\in\mc Lie(2)$, then 
\begin{equation}\label{no:lie-action}
\ad_l=h_1(l\otimes[\cdot\,,\,\cdot])\,,
\end{equation}
where $h_1$ is the augmentation map in \eqref{eq:augmentation}.
Moreover, this action is by derivations of the $\mc Lie$-algebra $L$, that is
$$
\ad_l \circ [\cdot\,,\,\cdot] = [\cdot\,,\,\cdot] \circ (\ad_l\otimes \id_L) + [\cdot\,,\,\cdot] \circ (\id_L\otimes\ad_l)
\,.
$$
\begin{example}\label{ex:lie-mod}
In the particular cases of Examples \ref{ex:Lie2}, \ref{ex:Lie3} and \ref{ex:Lie4},
we recover the facts that, if $L$ is a Lie conformal algebra, a (non unital) vertex algebra 
or a (non unital) Poisson vertex algebra,
then it is canonically a module over the Lie algebra $L/\partial L=h(L)$.
This action is by derivations of the $\lambda$-bracket. 
\end{example}

The vector space $\mb F = \mb F |0\rangle$ is naturally a vertex algebra (resp. a Poisson vertex algebra)
with zero $\lambda$-bracket. 
It is immediate to check that a  \emph{unital vertex algebra} $V$ is the same as 
a $\mc Lie$-algebra in the pseudo tensor category $\mathcal{C}^{ch}$, 
together with a morphism of non-unital vertex algebras $\mb F |0\rangle \rightarrow V$ 
such that the restriction of the binary operation $X_{\lambda_1, \lambda_2}$,
image of $\beta\in\mc Lie{(2)}$, to $\vac \otimes V$ satisfies:
\begin{equation}\label{eq:unit-vac}
X_{\lambda_1, \lambda_2}^{z_1,z_2} \Big(\vac, v; \frac{1}{z_2-z_1} \Big) = v
\,.
\end{equation}

%%%
\subsection{Operads in pseudo tensor categories}\label{sec:operads}

\begin{definition}\label{def:operad-ptc1}
Let ${\mc C}$ be a pseudo tensor category. 
An (non-necessarily unital) operad ${\mathscr P}$ in ${\mc C}$ consists of the following data:
a collection of objects ${\mathscr P}(n)$ of ${\mc C}$, $n\geq0$;
a right action of the symmetric group $S_n$ on ${\mathscr P}(n)$,
i.e.  a group homomorphism 
\begin{equation}\label{eq:gp-hom}
S_n \mapsto \Aut({\mathscr P}(n))^{op} \subset {\mc C}({\mathscr P}(n), {\mathscr P}(n))^{op}
, \qquad 
\sigma \mapsto \rho_\sigma \in {\mc C}\big( {\mathscr P}(n), {\mathscr P}(n) \big)
\,,
\end{equation}
where ${\mc C}\big( {\mathscr P}(n), {\mathscr P}(n) \big)$ has the associative product
given by the composition \eqref{equation1},
$\Aut({\mathscr P}(n))\subset {\mc C}\big( {\mathscr P}(n), {\mathscr P}(n) \big)$
is the group of invertible elements with respect to this product,
and $op$ denotes the group with respect to the opposite product;
a collection of \emph{composition maps}
\begin{equation} \label{eq:composition1} 
\circ_{(m_1,\ldots,m_n)} \in {\mc C} \big({\mathscr P}(n),{\mathscr P}(m_1),\cdots,{\mathscr P}(m_n); {\mathscr P}(M_n) \big)
\,,
\end{equation}
where we denote, as usual, $M_n=\sum_{i=1}^nm_i$.
These data are subject to the following conditions:
\begin{enumerate}[(i)]
\item 
The composition maps are compatible with the actions of the symmetric groups as follows. 
Given $\sigma \in S_n$ and $\tau_i \in S_{m_i}$, $i=1,\dots,n$, 
we consider the composition of permutations $\sigma(\tau_1,\dots,\tau_n) \in S_{m_1+\ldots +m_n}$ 
defined by \eqref{eq:operad19}. 
Then
\begin{equation}\label{20240606:eq1}
	\circ_{(m_1,\ldots,m_n)}
\big( \rho_{\sigma} \otimes \rho_{\tau_1} \otimes\cdots \otimes \rho_{\tau_n} \big) 
=
\rho_{\sigma(\tau_1, \ldots, \tau_n)} \Big( \big( \circ_{({m_{\sigma^{-1}(1)}},\ldots,m_{\sigma^{-1}(n)})} \big)^{\tilde\sigma} \Big)
\,,
\end{equation}
where $\tilde\sigma\in\text{Perm}\{0,1,\dots,n\}=S_{n+1}$
is the permutation which fixes $0$ and permutes $1,\dots,n$ via $\sigma$.
\item 
Recall that $\id_{{\mathscr P}(n)}\in\mc C({\mathscr P}(n);{\mathscr P}(n))$ denotes the unit vector
of Definition \ref{defn:pseudo-tensor}(4), associated to ${\mathscr P}(n)$ viewed as an object in $\mc C$.
The composition maps are associative in the following sense. 
Given $m_1,\cdots,m_n\in\mb Z_{\geq0}$,
and $l_{1},\dots,l_{M_n}\in\mb Z_{\geq0}$,
\begin{equation}\label{20240606:eq2}
\begin{split} 
& \Big[
\circ_{\big(\sum_{j=1}^{M_1} l_j , \ldots, \sum_{j=M_{n-1}+1}^{M_n} l_j\big)}  
\Big( 
\id_{{\mathscr P}(n)} 
\otimes \circ_{(l_1,\ldots,l_{M_1})} 
\otimes \dots 
\otimes \circ_{(l_{M_{n-1}+1},\ldots,l_{M_n})} 
\Big)
\Big]^\sigma  \\ 
& =
\circ_{(l_1, \ldots,l_{M_n})} 
\Big( 
\circ_{(m_1,\ldots,m_n)} 
\otimes \id_{{\mathscr P}(l_1)} 
\otimes \dots
\otimes \id_{{\mathscr P}(l_{M_n})} 
\Big)
\,,
\end{split}
\end{equation}
where $\sigma \in S_{M_n+n+1}$ is the unique shuffle of type $(n+1,M_n)$, given by
$\sigma(1)=1$, $\sigma(i) = i+M_{i-2}$, $2 \leq i \leq n+1$.
\end{enumerate}
\end{definition}
The above identities \eqref{20240606:eq1} and \eqref{20240606:eq2} need some explanation.
All compositions are in the pseudo tensor category $\mc C$, 
defined according to Definition \ref{defn:pseudo-tensor}(5).
In equation \eqref{20240606:eq1} the left-hand side  is obtained by applying the composition map 
\begin{align*}
& \mc C\Big(
{\mathscr P}(n),{\mathscr P}(m_1),\dots,{\mathscr P}(m_n);{\mathscr P}(M_n)
\Big) \\
&\qquad \otimes
\mc C\big(
{\mathscr P}(n);{\mathscr P}(n)
\big)
\otimes
\mc C\big(
{\mathscr P}(m_1);{\mathscr P}(m_1)
\big)
\otimes\dots\otimes
\mc C\big(
{\mathscr P}(m_n);{\mathscr P}(m_n)
\big)
\,,
\end{align*}
hence it lies in
\begin{equation}\label{eq:B}
{\mc C}\Big(
{\mathscr P}(n), {\mathscr P}(m_1),\cdots, {\mathscr P}(m_n) ; {\mathscr P}(M_n) 
\Big)
\,.
\end{equation}
In the right-hand side, $\circ_{({m_{\sigma^{-1}(1)}},\ldots,m_{\sigma^{-1}(n)})}$
lies in 
$$
{\mc C}\Big(
{\mathscr P}(n), {\mathscr P}(m_{\sigma^{-1}(1)}),\cdots, {\mathscr P}(m_{\sigma^{-1}(n)}) ; 
{\mathscr P}(M_n) \Big)
\,.
$$
Then, we need to right-act on it with $\tilde\sigma$, according to \eqref{equation1b},
ending up in \eqref{eq:B}.
If we then right-compose it with 
$\rho_{\sigma(\tau_1, \ldots, \tau_n)}\in\mc C\big({\mathscr P}(M_n);{\mathscr P}(M_n)\big)$,
the result is still in \eqref{eq:B}.
As for equation \eqref{20240606:eq2}, 
the right-hand side is obtained by applying the composition map to
\begin{align*}
& \mc C\Big(
{\mathscr P}(M_n),{\mathscr P}(l_1),\dots,{\mathscr P}(l_{M_n});{\mathscr P}\big(\sum_{j=1}^{M_n}l_j\big)
\Big) \\
&\qquad \otimes
\mc C\big(
{\mathscr P}(n),{\mathscr P}(m_1),\dots,{\mathscr P}(m_n);{\mathscr P}(M_n)
\big) \\
&\qquad \otimes
\mc C\big(
{\mathscr P}(l_1);{\mathscr P}(l_1)
\big)
\otimes\dots\otimes
\mc C\big(
{\mathscr P}(l_{M_n});{\mathscr P}(l_{M_n})
\big)
\,,
\end{align*}
hence it lies in
\begin{equation}\label{eq:A}
{\mc C}\Big(
{\mathscr P}(n), {\mathscr P}(m_1),\cdots, {\mathscr P}(m_n),
{\mathscr P}(l_1), \cdots,{\mathscr P}(l_{M_n});  {\mathscr P}\big(\sum_{j=1}^{M_n} l_j\big) 
\Big)
\,.
\end{equation}
Before applying $\sigma$, the left-hand side is obtained by applying the composition map
\begin{align*}
& \mc C\Big(
{\mathscr P}(n),{\mathscr P}\big(\sum_{j=1}^{M_1}l_j\big),\dots,
{\mathscr P}\big(\sum_{j=M_{n-1}+1}^{M_n}l_j\big);{\mathscr P}\big(\sum_{j=1}^{M_n}l_j\big)
\Big) \\
&\qquad \otimes
\mc C\Big(
{\mathscr P}(m_1),{\mathscr P}(l_1),\dots,{\mathscr P}(l_{M_1});{\mathscr P}\big(\sum_{j=1}^{M_1}l_j\big)
\Big) 
\otimes\dots \\
&\qquad \dots \otimes
\mc C\Big(
{\mathscr P}(m_n),{\mathscr P}(l_{M_{n-1}+1}),\dots,{\mathscr P}(l_{M_n});{\mathscr P}\big(\sum_{j=M_{n-1}+1}^{M_n}l_j\big)
\Big) \\
&\qquad \longrightarrow
{\mc C}\Big(
{\mathscr P}(n), {\mathscr P}(m_1),{\mathscr P}(l_1), \cdots,{\mathscr P}(l_{M_1})
,\dots \\
& \qquad\qquad
\dots,
{\mathscr P}(m_n),{\mathscr P}(l_{M_{n-1}+1}), \cdots,{\mathscr P}(l_{M_n});  
{\mathscr P}\big(\sum_{j=1}^{M_n} l_j\big) 
\Big)
\,.
\end{align*}
Hence, if we then apply the permutation $\sigma$, we obtain an element of \eqref{eq:A}.

\begin{example}\label{ex:symmetric}
When ${\mc C}$ is a symmetric monoidal category, then an operad in ${\mc C}$ is given by a collection of objects ${\mathscr P}(n)$ together with linear maps
\[ \circ_{(m_1,\ldots,m_n)} : {\mathscr P}(n) \otimes {\mathscr P}(m_1) \otimes \cdots \otimes {\mathscr P}(m_n) \rightarrow {\mathscr P}(m_1 +\cdots + m_n), \]
satisfying the usual associativity and symmetry conditions.
In particular, an operad in the category $\Vect_{\mb F}$ of vector spaces
is the same as a usual operad.
\end{example}

\begin{remark}\label{rem:not-in-c}
If ${\mc C}$ is a pseudo tensor category and $M \in {\mc C}$ is an object, 
then the collection of vector spaces ${\mathscr P} = {\mc C}(M,\cdots,M;M)$ 
is an operad in the category of vector spaces, cf. Remark \ref{rem:operad-1}, 
but \textbf{it is not} an operad in ${\mc C}$.
\end{remark}

\begin{example}\label{ex:asdqwe}
Suppose that the symmetric monoidal category ${\mc C}$ has an \emph{inner Hom} functor $\mc Hom$.
This is the case, for example, for the category $\mc C=Vect_{\mb F}$ of vector spaces.
In this case given any object $M$ in ${\mc C}$ we obtain an operad 
$\mc End_M$ in ${\mc C}$ given by 
$$
\mc End_M(n) := \mc Hom(M^{\otimes n}, M)
\,.
$$
\end{example}

\begin{definition}\label{def:operad-ptc2}
An operad in a pseudo tensor category $\mc C$ is \emph{unital} if
${\mathscr P}$ has an identity $1 \in h({\mathscr P}(1))$ such that 
$$
h_1(1 \otimes \circ_{(n)}
=
h_{n+1}(1^{\otimes n}\otimes \circ_{(1,\dots,1)})
=
\id_{{\mathscr P}(n)}
\in
\mc C({\mathscr P}(n),{\mathscr P}(n))
\,,
$$
where $h_i$ are the augmentation maps \eqref{eq:augmentation}.
\end{definition}

Given an unital operad ${\mathscr P}$ in an augmented pseudo tensor category ${\mc C}$, 
for any two natural numbers $n,m$ and for $1 \leq i \leq n$ we define the $\circ_i$ compositions as
\begin{equation} \label{eq:circ_i}
\circ_i = \circ_i^{n,m} 
:=
h_i(1^{\otimes(n-1)} \otimes \circ_{(1,\ldots,m,\ldots,1)})
\in 
{\mc C}( {\mathscr P}(n), {\mathscr P}(m); {\mathscr P}(n+m-1)) 
\end{equation}
(in the indices of $\circ$ in te RHS there are $n-1$ elements $1$ and $m$ is in the $i$-th position).
They satisfy the following conditions (cf. \eqref{eq:circ-assoc1-conf}-\eqref{eq:circ-assoc3-conf}):
\begin{equation} \label{eq:circ-assoc}
\circ_j^{n+m-1,l} \bigl( \circ_i^{n,m} \otimes \id_{{\mathscr P}(l)} \bigr) = 
\begin{cases}
\circ_{l+i-1}^{n+l-1,m} \bigl( \circ_j^{n,l} \otimes \id_{{\mathscr P}(m)} \bigr) & 1 \leq j < i, \\ 
\circ_i^{n,m+l-1} \bigl( \id_{{\mathscr P}(n)} \otimes \circ_{j-i+1}^{m,l} \bigr) & i \leq j < i+m, \\ 
\circ_i^{(n+l-1,m)} \bigl( \circ_{j-m+1}^{n,l} \otimes\id_{{\mathscr P}(m)} \bigr) & i+m \leq j < n+m.
\end{cases}
\end{equation}
which is an identity in 
${\mc C}\bigl({\mathscr P}(n), {\mathscr P}(m), {\mathscr P}(l); {\mathscr P}(n+m+l-2)\bigr)$.
Note that, given the objects ${\mathscr P}(n)$ for every $n \geq 1$ and the $\circ_i$ products,
we recover the composition morphisms $\circ_{(m_1,\ldots,m_n)}$ as 
\begin{equation} \label{eq:recover-circ} 
\circ_{(m_1,\ldots,m_n)} = \circ_{M_{n-1}+1}^{n+M_{n-1}-1,m_n} \Bigl( \cdots
\circ_{M_1+1}^{n+M_1-1,m_2} \Bigl( \circ^{n,M_1}_1 \otimes \id_{{\mathscr P}(m_2)} \Bigr)
\otimes \cdots \id_{{\mathscr P}(m_n)} \Bigr).  
\end{equation}

\begin{definition}\label{def:pseudo-operad}
A \emph{pseudo operad} in a pseudo tensor category ${\mc C}$ consists of a
collection ${\mathscr P}(n)$ of objects in $\mc C$,
endowed with actions of the symmetric groups $S_n$,
and with $\circ_i$ compositions \eqref{eq:circ_i} 
which are equivariant with respect to the actions of $S_n$,
and satisfy conditions \eqref{eq:circ-assoc}.
\end{definition}

\begin{remark}
It follows from this definition and \eqref{eq:recover-circ} that every unital
operad in a pseudo tensor category ${\mc C}$ gives rise to a pseudo operad in ${\mc C}$.
The converse is not true, as the examples of the conformal and vertex operads
from the previous sections are not unital operads, however the $\circ_i$
operations can be explicitly defined, endowing them with the structure of a
pseudo operad.
\end{remark}

Let ${\mathscr P}$ be a unital operad in an augmented pseudo tensor category ${\mc C}$. 
Let (cf. \eqref{eq:box})
\begin{equation} \label{eq:box-pseudo} 
\Box = \frac{1}{n} \sum_{i=1}^{n} \sum_{\sigma \in S_{m,n-1}}  
( \circ_i^{n,m} )^{\sigma^{-1}} 
\in 
{\mc C}({\mathscr P}(n), {\mathscr P}(m); {\mathscr P}(n+m-1)).
\end{equation}
We extend it linearly to a degree $0$ operation $\Box \in {\mc C} (W, W; W)$ where
\begin{equation} \label{eq:W-definition-inv}
W = \oplus_{k \geq 0} {\mathscr P}(k)^{S_k}
\,,
\end{equation}
which is a $\mathbb{Z}$-graded object of ${\mc C}$ where ${\mathscr P}(k)$ is in degree $k-1$. 
The $S_k$-invariants ${\mathscr P}(k)$ mean skew-symmetric invariants.
\begin{remark}
In all examples above, the objects ${\mathscr P}(n)$ are vector superspaces,
hence the notion of $S_n$-ivariants is clear. In general, however,
given an action 
$\rho: S_n \rightarrow \mathrm{Aut}({\mathscr P}(n))$, 
the skew symmetric action $\rho^\epsilon$ is obtained by defining
$$
\rho^\epsilon(\sigma) = (-1)^\sigma \id_{{\mathscr P}(n)} \circ \rho(\sigma).
$$
We also require that the category ${\mathcal C}$ has kernels in order to define
invariants, and that it has cokernels in order to define coinvariants. 
\end{remark}
Finally consider 
\begin{equation} \label{eq:bracket1}
[,] = \Box - \Box^{\sigma_{12}},
\end{equation} 
where $\sigma_{12} \in S_2$ is the non-trivial element and $\Box^{\sigma_{12}}$ 
is defined by Remark \ref{rem:operad-1}. 

\begin{theorem}\label{thm:main}
Let ${\mc C}$ be pseudo tensor category and let ${\mathscr P} = \left\{ {\mathscr P}(n)
\right\}$ be a pseudo operad in ${\mc C}$. Then the object $W$ is a $\mathbb{Z}_{\geq -1}$-graded Lie algebra in ${\mc C}$.
\end{theorem}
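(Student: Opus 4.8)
The plan is to prove Theorem \ref{thm:main} as the pseudo-tensor-categorical generalization of Theorem \ref{20170603:thm2} (equivalently \cite[Thm.3.4]{BDSHK19}) and of its conformal counterpart Theorem \ref{20170603:thm2-conf}. The key observation is that both of those proofs are carried out entirely through identities among composition morphisms, and never use the internal structure of the spaces ${\mathscr P}(n)$ beyond the sesquilinearity, associativity and equivariance relations. By Definition \ref{def:pseudo-operad} these same relations — the associativity identities \eqref{eq:circ-assoc} together with the $S_n$-equivariance of the $\circ_i$ — hold for a pseudo operad in any pseudo tensor category $\mc C$. I would therefore reinterpret each step of the vector-space argument as an identity between morphisms in $\mc C$, using the right $S_n$-action \eqref{equation1b} in place of the action on elements. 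The argument splits into three parts mirroring the three claims of Theorem \ref{20170603:thm2}: (a) the product $\Box$ of \eqref{eq:box-pseudo} sends invariants to invariants, so that it defines a morphism $\Box\in\mc C(W,W;W)$; (b) the associator of $\Box$ is right-symmetric; (c) the skew-symmetrization \eqref{eq:bracket1} is a Lie bracket.

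For part (a) I would use the equivariance of the $\circ_i$ together with the combinatorial fact that the $(m,n-1)$-shuffles $S_{m,n-1}$ are a set of coset representatives for $S_{m+n-1}/(S_m\times S_{n-1})$. The averaging over the insertion position $i$ (the factor $\frac1n$) combined with the sum over shuffles is precisely what is needed so that an arbitrary right action of $S_{n+m-1}$ merely permutes the summands, once the two arguments are taken to be skew-symmetric invariants (carrying the signs $(-1)^\sigma$); hence the image of $\Box$ restricted to $W$ lands in $\bigoplus_k{\mathscr P}(k)^{S_k}$. For part (b) I would transcribe the computation of \cite[Eqs.(3.18)-(3.22)]{BDSHK19}, reused in the proof of Theorem \ref{20170603:thm2-conf}: expanding $(\,\cdot\,\Box\,\cdot)\Box\,\cdot$ and $\cdot\,\Box(\,\cdot\,\Box\,\cdot)$ via \eqref{eq:box-pseudo}, the three associativity relations \eqref{eq:circ-assoc} separate the terms into one piece that cancels in the associator and a remainder which, after reindexing the shuffle sums and applying equivariance twice, exhibits the right-symmetry under exchanging the second and third arguments. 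As noted in Remark \ref{rem:reimundo}, this right-symmetry is a formal consequence of the defining relations alone, so it transfers verbatim to $\mc C$.

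Finally, part (c) is purely formal and holds in any pseudo tensor category admitting the kernels and cokernels needed to form invariants: skew-symmetry $[,]^{\sigma_{12}}=-[,]$ is immediate from \eqref{eq:bracket1} and $(\Box^{\sigma_{12}})^{\sigma_{12}}=\Box$, while the Jacobi identity follows because the Jacobiator of $[,]$ equals the total skew-symmetrization of the associator of $\Box$, which vanishes by part (b). This is the Lie-admissibility of a right-symmetric product, expressed as an identity in $\mc C({\mathscr P}(n),{\mathscr P}(m),{\mathscr P}(l);{\mathscr P}(n+m+l-2))$. The main obstacle is not conceptual but bookkeeping: every manipulation performed on elements $f,g,h$ in \cite{BDSHK19} must be rewritten as a composition of morphisms in $\mc C$, with careful tracking of the right $S_n$-actions \eqref{equation1b}, of the permutation compositions $\sigma(\tau_1,\dots,\tau_n)$ and $\sigma\circ_i\tau$ appearing in the equivariance axioms, and of the signs $(-1)^\sigma$ coming from the skew-symmetric invariants. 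Since Definition \ref{def:pseudo-operad} supplies exactly the relations \eqref{eq:circ-assoc} and the equivariance needed at each stage, no further input beyond this careful translation is required.
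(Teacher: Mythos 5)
Your proposal is correct and takes essentially the same route as the paper: the paper's entire proof of Theorem \ref{thm:main} is the observation (Remark \ref{rem:reimundo}) that the right-symmetry of the associator of $\Box$ is a formal consequence of the identities \eqref{eq:circ-assoc} (equivalently \eqref{eq:circ-assoc1-conf}--\eqref{eq:circ-assoc3-conf}) together with equivariance, so the computation of \cite[Thm.3.4]{BDSHK19} transfers verbatim to morphism identities in $\mc C$. Your three-part breakdown (invariance of $\Box$, right-symmetric associator, formal Lie-admissibility of the skew-symmetrization) simply makes explicit the bookkeeping that the paper leaves to the reader.
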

\begin{proof}
It follows by Remark \ref{rem:reimundo} since the $\circ_i$ products satisfy identities
\eqref{eq:circ-assoc}, or equivalently \eqref{eq:circ-assoc1-conf}-\eqref{eq:circ-assoc3-conf}.
\end{proof}
\begin{remark}
	If instead of taking invariants in \eqref{eq:W-definition-inv} we would
	have considered coinvariants, then the sum over shuffles in
	\eqref{eq:box-pseudo} is not necessary. $W$ is also a Lie algebra in
	${\mathcal C}$ in this case. When the characteristic of $\mathbb{F}$ is
	zero, these two definitions agree. 
\end{remark}

\subsection{Conformal operads as operads in the pseudo tensor category $\mc C^*$}

\begin{proposition}
A conformal operad $\widetilde{\mathscr P}$,
as defined in Definition \ref{def:conf-operad},
is the same as an operad in the pseudo tensor category $\mc C^*$ of Example \ref{ex:cHom}.
\end{proposition}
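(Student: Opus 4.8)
The plan is to unwind both definitions and exhibit a term-by-term dictionary between the data and axioms of a conformal operad (Definition \ref{def:conf-operad}) and those of an operad in $\mc C^*$ (Definition \ref{def:operad-ptc1}); since both notions are built on the same collection of $\mb F[\partial]$-modules, the content is entirely a matter of identifying the composition maps and checking that the axioms translate into one another. First I would set up the correspondence of data. An operad in $\mc C^*$ has, for each $n$, an object $\mathscr P(n)$ of $\mc C^*$, i.e. an $\mb F[\partial]$-module, which is precisely the vector superspace $\widetilde{\mathscr P}(n)$ together with its even endomorphism $\partial$ of Definition \ref{def:conf-operad}. The composition map $\circ_{(m_1,\dots,m_n)}$ lies in $\mc C^*(\mathscr P(n),\mathscr P(m_1),\dots,\mathscr P(m_n);\mathscr P(M_n))$, which by Example \ref{ex:cHom} is an $\mb F$-linear map from $\mathscr P(n)\otimes\mathscr P(m_1)\otimes\dots\otimes\mathscr P(m_n)$ into the quotient $\mathscr P(M_n)[\nu_0,\nu_1,\dots,\nu_n]/\langle\partial+\nu_0+\nu_1+\dots+\nu_n\rangle$, with one variable $\nu_0$ for the $f$-slot and one variable $\nu_i$ for each $g_i$-slot. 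The key observation is that eliminating $\nu_0$ via $\nu_0=-\partial-\nu_1-\dots-\nu_n$ identifies this quotient with the polynomial module $\widetilde{\mathscr P}(M_n)[\lambda_1,\dots,\lambda_n]$ under $\lambda_i=\nu_i$; I would use this isomorphism to pass between $\circ_{(m_1,\dots,m_n)}$ and the conformal composition map $f_{\lambda_1,\dots,\lambda_n}(g_1\otimes\dots\otimes g_n)$ in both directions.

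Next I would check that the sesquilinearity \eqref{eq:sesqui1} imposed on the elements of $\mc C^*(\dots)$ matches \eqref{20170613:eq1-conf}. The condition in the $\nu_0$-slot reads $\circ(\partial f\otimes\cdots)=-\nu_0\,\circ(f\otimes\cdots)=(\partial+\lambda_1+\dots+\lambda_n)\,\circ(f\otimes\cdots)$ after the substitution, which is the first equation of \eqref{20170613:eq1-conf}, while the conditions in the $\nu_i$-slots with $i\ge1$ become $f_{\lambda_1,\dots,\lambda_n}(\cdots\partial g_i\cdots)=-\lambda_i f_{\lambda_1,\dots,\lambda_n}(\cdots)$, the second equation. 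The $S_n$-action of Definition \ref{def:operad-ptc1}, a group homomorphism $\sigma\mapsto\rho_\sigma$ into $\mc C^*(\mathscr P(n);\mathscr P(n))=\End_{\mb F[\partial]}\mathscr P(n)$, is exactly a right action of $S_n$ on $\widetilde{\mathscr P}(n)$ commuting with $\partial$, which I identify with $f\mapsto f^\sigma$, the homomorphism property being the right-action identity $(f^\sigma)^\tau=f^{\sigma\tau}$.

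It then remains to match the two structural axioms. For equivariance, I would show that \eqref{20240606:eq1} becomes \eqref{eq:operad4-conf}: the twist $\tilde\sigma\in S_{n+1}$, which fixes the $0$-th (i.e. $f$-)slot and permutes the remaining slots by $\sigma$, implements the permutation $\sigma(g_1\otimes\dots\otimes g_n)$ of the $g$-inputs together with the relabelling $\lambda_i\mapsto\lambda_{\sigma^{-1}(i)}$ coming from \eqref{eq:symmetric*} (its fixing $0$ being consistent with the invariance of $\nu_0=-\partial-\sum_i\lambda_i$ under permuting the $\lambda_i$), while right-composing with $\rho_{\sigma(\tau_1,\dots,\tau_n)}$ is the output twist $(\cdot)^{\sigma(\tau_1,\dots,\tau_n)}$, the permutation $\sigma(\tau_1,\dots,\tau_n)$ being read off from \eqref{eq:operad19} in both formulations. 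For associativity, I would substitute the explicit $\mc C^*$-composition \eqref{eq:composition*} into \eqref{20240606:eq2}; the crucial point is that \eqref{eq:composition*} contracts the inner variables by feeding $\Lambda_i=\sum_{j=M_{i-1}+1}^{M_i}\lambda_j$ into the $i$-th outer slot, and under the dictionary this reproduces precisely the shift $\lambda_i\mapsto\lambda_i+\Gamma_i$ with $\Gamma_i$ as in \eqref{eq:Mj-Gammaj}, turning \eqref{20240606:eq2} into \eqref{eq:operad2-conf}.

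The hard part will be the associativity bookkeeping: one must propagate the substitution $\nu_0\mapsto-\partial-\sum_i\lambda_i$ consistently through the three-level nested composition of \eqref{20240606:eq2}, keep the variable contractions $\Lambda_i$ and the shifts $\Gamma_i$ aligned across the inner and outer composition slots, and verify that the Koszul sign produced by \eqref{eq:composition*} agrees with the sign $\pm=(-1)^{\sum_{i<j}p(g_j)p(H_i)}$ appearing in \eqref{eq:operad2-conf}. Everything else is a routine unwinding of the definitions, and the resulting correspondence is manifestly bijective on structures.
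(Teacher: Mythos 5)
Your proposal is correct and follows essentially the same route as the paper's proof: the same identification $\widetilde{\mathscr P}(M_n)[\lambda_0,\dots,\lambda_n]/\langle\partial+\lambda_0+\dots+\lambda_n\rangle\simeq\widetilde{\mathscr P}(M_n)[\lambda_1,\dots,\lambda_n]$ via eliminating the $f$-slot variable, the same matching of the two sesquilinearity conditions in \eqref{eq:sesqui1} with \eqref{20170613:eq1-conf}, and the same translation of \eqref{20240606:eq1} into \eqref{eq:operad4-conf} and of \eqref{20240606:eq2} into \eqref{eq:operad2-conf}, with $\tilde\sigma$ and $\rho_{\sigma(\tau_1,\dots,\tau_n)}$ playing exactly the roles you assign them. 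The only difference is that the paper writes out the two nested-composition computations in full where you defer them as bookkeeping, and your dictionary correctly reduces them to the substitutions $\Lambda_i$ and shifts $\lambda_i+\Gamma_i$, so nothing essential is missing.
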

\begin{proof}
Indeed, $\widetilde{\mathscr P}(n)$ is endowed with an even endomorphism,
making it an $\mb F[\partial]$-module, namely an object of $\mc C^*$.
Moreover, by assumption we have a right action of $S_n$ on $\widetilde{\mathscr P}(n)$,
which defines the group homomorphism 
$S_n \mapsto \Aut({\mathscr P}(n))^{op}$ in \eqref{eq:gp-hom}.
The composition map \eqref{eq:conf-comp} defines a map
\begin{align*}
& \circ_{(m_1,\dots,m_n)}:\,
\widetilde{\mathscr P}(n)\otimes\widetilde{\mathscr P}(m_1)\otimes\dots\otimes\widetilde{\mathscr P}(m_n)
\to \\
&\qquad
\widetilde{\mathscr P}(M_n)[\lambda_1,\dots,\lambda_n]
\simeq
\widetilde{\mathscr P}(M_n)[\lambda_0,\dots,\lambda_n]/
\langle\partial+\lambda_0+\dots+\lambda_n\rangle
\,,
\end{align*}
given by
$$
(\circ_{(m_1,\dots,m_n)})_{\lambda_0,\lambda_1,\dots,\lambda_n}
(f\otimes g_1\otimes\dots\otimes g_n)
=
f_{\lambda_1,\dots,\lambda_n}(g_1\otimes\dots\otimes g_n)
\,.
$$
The sesquilinearity conditions \eqref{20170613:eq1-conf}
guarantee that
$$
\circ_{(m_1,\dots,m_n)}
\in
\mc C^*\big(\widetilde{\mathscr P}(n),\widetilde{\mathscr P}(m_1),\dots,\widetilde{\mathscr P}(m_n);
\widetilde{\mathscr P}(M_n)\big)
\,,
$$
as required in \eqref{eq:composition1}.

We want to check that
the compatibility \eqref{20240606:eq1} of the composition maps with the symmetric group action
boils down, for the pseudo tensor category $\mc C^*$,
to the equivariance condition \eqref{eq:operad4-conf}.
Indeed,  by the definition \eqref{eq:composition*} of the composition in the pseudo tensor
category $\mc C^*$, and of the right-action \eqref{eq:symmetric*} of the symmetric groups,
we have
\begin{align*}
& \Big(
\circ_{(m_1,\ldots,m_n)}
\big( \rho_{\sigma} \otimes \rho_{\tau_1} \otimes\cdots \otimes \rho_{\tau_n} \big) 
\Big)_{\lambda_0,\lambda_1,\dots,\lambda_n}
(f\otimes g_1\otimes\dots\otimes g_n) \\
& =
\big(\circ_{(m_1,\ldots,m_n)}\big)_{\lambda_0,\lambda_1,\dots,\lambda_n}
\Big(
(\rho_{\sigma})_{\lambda_0}(f) \otimes (\rho_{\tau_1})_{\lambda_1}(g_1) \otimes\cdots \otimes (\rho_{\tau_n})_{\lambda_n}(g_n)  
\Big) \\
& =
\big(\circ_{(m_1,\ldots,m_n)}\big)_{\lambda_0,\lambda_1,\dots,\lambda_n}
\Big(
f^\sigma \otimes g_1^{\tau_1} \otimes\cdots \otimes g_n^{\tau_n}  
\Big) \\
& =
(f^\sigma)_{\lambda_1,\dots,\lambda_n}
( g_1^{\tau_1} \otimes\cdots \otimes g_n^{\tau_n} )
\,,
\end{align*}
and
\begin{align*}
& \Big(
\rho_{\sigma(\tau_1, \ldots, \tau_n)} \Big( \big( \circ_{({m_{\sigma^{-1}(1)}},\ldots,m_{\sigma^{-1}(n)})} \big)^{\tilde\sigma} \Big)
\Big)_{\lambda_0,\lambda_1,\dots,\lambda_n}
(f\otimes g_1\otimes\dots\otimes g_n) \\
& =
(\rho_{\sigma(\tau_1, \ldots, \tau_n)})_{\lambda_0+\dots+\lambda_n} 
\Bigg(
\Big( \big( \circ_{({m_{\sigma^{-1}(1)}},\ldots,m_{\sigma^{-1}(n)})} \big)^{\tilde\sigma} 
\Big)_{\lambda_0,\lambda_1,\dots,\lambda_n}
\!\!\!\!\!\!\!\!\!\!\!\!
(f\otimes g_1\otimes\dots\otimes g_n)
\Bigg) \\
& =
(\rho_{\sigma(\tau_1, \ldots, \tau_n)})_{\Lambda} 
\Bigg(
\Big( \circ_{({m_{\sigma^{-1}\!\!(1)}},\ldots,m_{\sigma^{-1}\!\!(n)})} 
\Big)_{\lambda_0,\lambda_{\sigma^{-1}(1)},\dots,\lambda_{\sigma^{-1}(n)}}
\!\!\!\!\!\!\!\!\!\!\!\!\!\!\!\!\!\!\!\!\!\!\!\!
(f\otimes g_{\sigma^{-1}(1)}\otimes\dots\otimes g_{\sigma^{-1}(n)})
\Bigg) \\
& =
(\rho_{\sigma(\tau_1, \ldots, \tau_n)})_{\Lambda} 
\Big(
f_{\lambda_{\sigma^{-1}(1)},\dots,\lambda_{\sigma^{-1}(n)}}
(g_{\sigma^{-1}(1)}\otimes\dots\otimes g_{\sigma^{-1}(n)})
\Big) \\
& =
\Big(
f_{\lambda_{\sigma^{-1}(1)},\dots,\lambda_{\sigma^{-1}(n)}}
(g_{\sigma^{-1}(1)}\otimes\dots\otimes g_{\sigma^{-1}(n)})
\Big)^{\sigma(\tau_1, \ldots, \tau_n)}
\,.
\end{align*}

Finally, we prove that 
the associativity \eqref{20240606:eq2} of the composition maps boils down, 
for the pseudo tensor category $\mc C^*$, 
to the associativity \eqref{eq:operad2-conf} of a conformal operad.
Indeed, in this case, by the definition \eqref{eq:composition*} of the composition in the pseudo tensor
category $\mc C^*$, we have
\begin{align*}
& \Big(\circ_{(l_1, \ldots,l_{M_n})} 
\Big( 
\circ_{(m_1,\ldots,m_n)} 
\otimes \id_{{\mathscr P}(l_1)} 
\otimes \dots
\otimes \id_{{\mathscr P}(l_{M_n})} 
\Big)\Big)_{\lambda_0,\lambda_1,\dots,\lambda_n,\mu_1,\dots,\mu_{M_n}} 
\!\!\!\!\!\!\!\!\!
(f\otimes g_1\otimes\dots \\
&\quad
\dots\otimes 
g_n\otimes h_1\otimes\dots\otimes h_{M_n}) 
= 
\big(\circ_{(l_1, \ldots,l_{M_n})} \big)_{\lambda_0+\dots+\lambda_n,\mu_1,\dots,\mu_{M_n}}
\Big( \\
&\quad \big(\circ_{(m_1,\ldots,m_n)} \big)_{\lambda_0,\lambda_1,\dots,\lambda_n}
(f\otimes g_1\otimes\dots\otimes g_n)
\otimes \big( \id_{{\mathscr P}(l_1)} \big)_{\mu_1}(h_1)
\otimes \dots \\
&\quad
\dots \otimes 
\big( \id_{{\mathscr P}(l_{M_n})} \big)_{\mu_{M_n}}(h_{M_n})
\Big) 
= 
\big(\circ_{(l_1, \ldots,l_{M_n})} \big)_{\lambda_0+\dots+\lambda_n,\mu_1,\dots,\mu_{M_n}}
\Big( \\ 
&\quad 
f_{\lambda_1,\dots,\lambda_n}(g_1\otimes\dots\otimes g_n)
\otimes h_1
\otimes \dots \otimes 
h_{M_n}
\Big) \\
&= 
\big(
f_{\lambda_1,\dots,\lambda_n}(g_1\otimes\dots\otimes g_n)
\big)_{\mu_1,\dots,\mu_{M_n}}
(h_1 \otimes \dots \otimes h_{M_n})
\,,
\end{align*}
and
\begin{align*} 
& \Big[
\circ_{\big(\sum_{j=1}^{M_1} l_j , \ldots, \sum_{j=M_{n-1}+1}^{M_n} l_j\big)}  
\Big( 
\id_{{\mathscr P}(n)} 
\otimes \circ_{(l_1,\ldots,l_{M_1})} 
\otimes \dots 
\otimes \circ_{(l_{M_{n-1}+1},\ldots,l_{M_n})} 
\Big) \\
&\qquad \Big]^\sigma_{\lambda_0,\lambda_1,\dots,\lambda_n,\mu_1,\dots,\mu_{M_n}}
(f\otimes g_1\otimes\dots\otimes g_n\otimes h_1\otimes\dots\otimes h_{M_n})  \\ 
& =
\Big(
\circ_{\big(\sum_{j=1}^{M_1} l_j , \ldots, \sum_{j=M_{n-1}+1}^{M_n} l_j\big)}  
\Big( 
\id_{{\mathscr P}(n)} 
\otimes \circ_{(l_1,\ldots,l_{M_1})} 
\otimes \dots 
\otimes \circ_{(l_{M_{n-1}+1},\ldots,l_{M_n})} 
\Big) \\
&\qquad
\Big)_{\lambda_0,\lambda_1,\mu_1,\dots,\mu_{M_1},\dots,\lambda_n,\mu_{M_{n-1}+1},\dots,\mu_{M_n}}
(f\otimes g_1\otimes h_1\otimes\dots\otimes h_{M_1}\otimes\dots \\
&\qquad\qquad \vphantom{\Bigg(}
\dots\otimes g_n\otimes h_{M_{n-1}+1}\otimes\dots\otimes h_{M_n}) \\
& =
\big(
\circ_{\big(\sum_{j=1}^{M_1} l_j , \ldots, \sum_{j=M_{n-1}+1}^{M_n} l_j\big)}  
\big)_{\lambda_0,\lambda_1+\mu_1+\dots+\mu_{M_1},\dots,\lambda_n+\mu_{M_{n-1}+1}+\dots+\mu_{M_n}}
\Big( \\
&\qquad
\big( \id_{{\mathscr P}(n)} \big)_{\lambda_0} (f)
\otimes 
\big( \circ_{(l_1,\ldots,l_{M_1})} \big)_{\lambda_1,\mu_1,\dots,\mu_{M_1}} 
(g_1\otimes h_1\otimes\dots\otimes h_{M_1})
\otimes \dots \\
&\qquad
\dots \otimes
\big( \circ_{(l_{M_{n-1}+1},\ldots,l_{M_n})} \big)_{\lambda_n,\mu_{M_{n-1}+1},\dots,\mu_{M_n}}
(g_n\otimes h_{M_{n-1}+1}\otimes\dots\otimes h_{M_n})
\Big) \\
& =
\big(
\circ_{\big(\sum_{j=1}^{M_1} l_j , \ldots, \sum_{j=M_{n-1}+1}^{M_n} l_j\big)}  
\big)_{\lambda_0,\lambda_1+\Gamma_1,\dots,\lambda_n+\Gamma_n}
\Big( f \otimes (g_1)_{\mu_1,\dots,\mu_{M_1}}(h_1\otimes\dots \\
&\qquad
\dots \otimes
h_{M_1})
\otimes \dots \otimes
(g_n)_{\mu_{M_{n-1}+1},\dots,\mu_{M_n}}(h_{M_{n-1}+1}\otimes\dots\otimes h_{M_n})
\Big) \\
& =
f_{\lambda_1+\Gamma_1,\dots,\lambda_n+\Gamma_n}
\big(
(g_1)_{\mu_1,\dots,\mu_{M_1}}(h_1\otimes\dots\otimes h_{M_1})
\otimes \dots \\
&\qquad
\dots \otimes
(g_n)_{\mu_{M_{n-1}+1},\dots,\mu_{M_n}}(h_{M_{n-1}+1}\otimes\dots\otimes h_{M_n})
\big)
\,.
\end{align*}
\end{proof}

As a result, Theorem \ref{20170603:thm2-conf} constructing the Lie conformal superalgebra
$\widetilde{W}(\widetilde{\mathscr P})$ associated to the conformal operad $\widetilde{\mathscr P}$
can be viewed as the special case of the pseudo tensor category $\mc C^*$,
of Theorem \ref{thm:main},
constructing the $\mc Lie$-algebra in $\mc C$ associated to an operad $\mathscr P$ 
in the pseudo tensor category $\mc C$.

\begin{remark}
Let $H$ be a cocommutative bialgebra.
Recalling Remark \ref{rem:operad-1} and Example \ref{ex:hopf},
given an $H$-module $V$, we obtain the corresponding operad $\mc Chom^HV$,
defined by letting 
$$
(\mc Chom^HV)(n)
=
\Hom_{H^{\otimes n}}(V^{\otimes n},H^{\otimes n}\otimes_{H}V)
\,,
$$
with the symmetric group action and the composition maps as in Example \ref{ex:hopf}.
We can also define the operad $\widetilde{\mc Chom}^HV$
in the category $\mc C^H$ as the collection of vector superspaces
$$
(\widetilde{\mc Chom}^HV)(n)
=
\Hom_{H^{\otimes n}}(V^{\otimes n},H^{\otimes n}\otimes V)
\,,
$$
with appropriate symmetric group actions and composition maps.
In the special case $H=\mb F[\partial]$,
these coincide with the operad $\mc Chom\,V$
and the operad $\widetilde{\mc Chom}\,V$ in $\mc C^*$, respectively.
\end{remark}

\section*{Acknowledgments}
A.\ De Sole is a member of the GNSAGA INdAM group, he has been  supported 
by the national PRIN grant 2022S8SSW2
and he acknowledges the financial support of INFN, IS CSN4 MMNLP.
R. Heluani was funded by grants from FAPERJ-CNE and CNPq.

%%%%%%%%%%%%%%%%%%%%%%%%%%%%%%%%%%%%%%%%%%%%%%%%%%%%%%%%%%%%%%%%%%%%%%%%%%%%%%%%%%%%%%%%%%%%%%%%%%%%%%%%%%%%%%
%%%%%%%%%%%%%%% Bibliography %%%%%%%%%%%%%%%%%%%%%%%%%%%%%%%%%%%%%%%%%%%%%%%%%%%%%%%%%%%%%%%%%%%%%%%%%%%%%%%%%
%%%%%%%%%%%%%%%%%%%%%%%%%%%%%%%%%%%%%%%%%%%%%%%%%%%%%%%%%%%%%%%%%%%%%%%%%%%%%%%%%%%%%%%%%%%%%%%%%%%%%%%%%%%%%%

% Non-BibTeX users please use

%%%%%%%%%%%%%%%%%%%%%%%%%%%%%%%%%%%%%%%


\begin{thebibliography}{}

% Format for Journal Reference
%Author, Journal \textbf{Volume}, (year) page numbers.
% Format for books
%Author, \emph{Book title} (Publisher, place year) page numbers

\bibitem[BDAK01]{BDAK01}
B.~Bakalov, A.~D'Andrea, and V.G.~Kac,
\emph{Theory of finite pseudoalgebras}.
Adv. Math. \textbf{162} (2001), 1-140.

\bibitem[BDSHK19]{BDSHK19}
Bakalov B., De Sole A., Heluani R., Kac V.G.,
\emph{An operadic approach to vertex algebra and Poisson vertex algebra cohomology},
Jpn. J. Math. {\bf 14} (2019), 249-342.

\bibitem[BDSHK20]{BDSHK20}
Bakalov B., De Sole A., Heluani R., Kac V.G.,
\emph{Chiral versus classical operad},
IMRN. {\bf 19} (2020), 6463--6488.

\bibitem[BDSHKV21]{BDSHKV21}
B.~Bakalov, A.~De Sole, R.~Heluani, V.G.~Kac, and V.~Vignoli,
\emph{Classical and variational Poisson cohomology},
Jpn. J. Math. {\bf 16} (2021) n.2, 203-246.

\bibitem[BDSK20]{BDSK20}
B.~Bakalov, A.~De Sole, and V.G.~Kac, 
\emph{Computation of cohomology of Lie conformal and Poisson vertex algebras},
Selecta Math {\bf 26} (2020) n.4, 1-51.

\bibitem[BDSK21]{BDSK21}
B.~Bakalov, A.~De Sole, and V.G.~Kac, 
\emph{Computation of cohomology of vertex algebras},
Jpn. J. Math. {\bf 16} (2021) n.2, 81-203-246.

\bibitem[BKV99]{BKV99}
B.~Bakalov, V.G.~Kac, and A.A.~Voronov,
\emph{Cohomology of conformal algebras}.
Commun. Math. Phys. \textbf{200}, (1999) 561-598.

\bibitem[BD04]{BD04}
A. Beilinson, V. Drinfeld, \emph{Chiral algebras}.
American Mathematical Society Colloquium Publications, {\bf 51}. 
American Mathematical Society, Providence, RI, 2004.

\bibitem[Bor86]{Bor86}
R. Borcherds, 
\emph{Vertex algebras, Kac-Moody algebras, and the Monster}.
Proc. Nat. Acad. Sci. U.S.A. \textbf{83} (1986), no. 10, 3068-3071.

\bibitem[Bor98]{Bor98}
R. Borcherds,
\emph{Vertex algebras}.
In ``Topological field theory, primitive forms and related topics'' (Kyoto, 1996), 35-77, 
Progr. Math., 160, Birkh{\"a}user Boston, Boston, MA, 1998.

\bibitem[BPZ84]{BPZ84}
A.A. Belavin, A.M. Polyakov, A. B. Zamolodchikov,
\emph{Infinite conformal symmetry in two-dimensional quantum field theory},
Nuclear Phys. B {\bf 241} (1984), no. 2, 333-380.

\bibitem[DSK06]{DSK06}
A.~De Sole and V.G.~Kac,
\emph{Finite vs. affine $W$-algebras}.
Japan. J. Math. \textbf{1}, (2006) 137-261.

\bibitem[DSK09]{DSK09}
A.~De Sole and V.G.~Kac,
\emph{Lie conformal algebra cohomology and the variational complex}.
Commun. Math. Phys. \textbf{292}, (2009) 667-719.

\bibitem[DSK13]{DSK13}
A.~De Sole and V.G.~Kac,
\emph{Variational Poisson cohomology}.
Japan. J. Math. \textbf{8}, (2013) 1-145.

\bibitem[FBZ04]{FBZ04}
E.~Frenkel and D.~Ben-Zvi,
\emph{Vertex algebras and algebraic curves}.
Mathematical Surveys and Monographs, {\bf 88}. American Mathematical Society,
Providence, RI, second edition (2004). 

%\bibitem[GK94]{ginzburg}
%V.~Ginzburg and M.~Kapranov,
%\emph{Koszul duality for operads}.
%Duke Math. J. (1) {\bf 76} (1994) 203-272.

\bibitem[Ger63]{Ger63}
M. Gerstenhaber, \emph{The cohomology structure of an associative ring}.
Ann. of Math. (2) {\bf 78} (1963) 267-288.

\bibitem[Kac98]{Kac98}
V.G.~Kac,
\emph{Vertex algebras for beginners}.
Univ. Lecture Ser., vol 10, AMS, 1996. Second edition, 1998.

\bibitem[Lam69]{Lam69}
J. Lambeck, \emph{Deductive systems and categories II.
Standard constructions and closed categories},
Lect. Notes in Math. {\bf 86} pp 76-122, Springer Verlag, 1969.

\bibitem[LV12]{LV12}
J.-L. Loday and B. Vallette, \emph{Algebraic operads.} 
Grundlehren der Mathematischen Wissenschaften,
{\bf 346}, Springer, Heidelberg, 2012.

\bibitem[Mar96]{Mar96}
M. Markl, \emph{Models for operads}
Comm. Algebra {\bf 24} (1996), no.4, 1471-1500.

\bibitem[MSS02]{MSS02}
M. Markl, S. Shnider, and J. Stasheff, \emph{Operads in algebra, topology and physics.}
Mathematical Surveys and Monographs, {\bf 96}, 
American Mathematical Society, Providence, RI, 2002.

\bibitem[NR67]{NR67}
A.~Nijenhuis and R.~Richardson,
\emph{Deformations of Lie algebra structures}.
J. Math. Mech. \textbf{17}, (1967) 89-105.

\bibitem[Tam02]{Tam02}
D. Tamarkin, 
\emph{Deformations of chiral algebras.} 
Proceedings of the International Congress of Mathematicians, 
Vol. II (Beijing, 2002), 105-116.

\bibitem[Yan16]{Yan16}
S. Yanagida,
\emph{Deformation quantization of vertex poisson algebras}.
Preprint arxiv:1607.02068,
2016.

\bibitem[Yau16]{Yau16}
D. Yau,
\emph{Colored operads.}
Graduate Studies in Mathematis, 
Vol 170 (2016) 428pp.

\end{thebibliography}
\end{document}